\newtheorem{theorem}{Theorem}[section]
\newtheorem{proposition}[theorem]{Proposition}
\newtheorem{corollary}[theorem]{Corollary}
\newtheorem{lemma}[theorem]{Lemma}
\theoremstyle{definition}
\newtheorem{example}[theorem]{Example}
\theoremstyle{definition}
\newtheorem{definition}[theorem]{Definition}
\theoremstyle{definition}
\newtheorem{remark}[theorem]{Remark}
\theoremstyle{definition}
\renewcommand{\int}{\mathrm{Int}}
\newcommand{\Xw}{\overline{X_w}}
\newcommand{\SW}{\mathrm{SW}}
\newcommand{\Ess}{\mathrm{Ess}}
\newcommand{\dom}{\mathrm{dom}}
\newcommand{\Gvwt}{G_{v,w}}
\newcommand{\Nvw}{\mathcal{N}_{v,w}}
\newcommand{\Dc}{D^{\circ}}
\newcommand{\dash}{\text{---}}
\newcommand{\Zg}{Z^{(v)}_w}
\newcommand{\cn}{\nu}
\newcommand{\rk}{r}
\newcommand{\GC}{G_{\mathcal C}}
\newcommand{\Gw}{G_w}
\newcommand{\sym}{\mathrm{sym}}
\newcommand{\low}{\mathrm{low}}
\newcommand{\init}{\mathrm{in}}
\newcommand{\codim}{\mathrm{codim}}
\newcommand{\indep}{\perp \!\!\! \perp}
\newcommand{\doublewidetilde}[1]{{%
  \mathpalette\double@widetilde{#1}%
}}
\newcommand{\double@widetilde}[2]{%
  \sbox\z@{$\m@th#1\widetilde{#2}$}%
  \ht\z@=.52\ht\z@
  \widetilde{\box\z@}%
}
\newcommand{\Gvwtt}{\overline{G_{v,w}}}
\author[\tiny Elke Neuhaus]{Elke Neuhaus}
\address{Max Planck Institute for Mathematics in the Sciences}
\email{elke.neuhaus@mis.mpg.de} 
\author[\tiny Niharika Chakrabarty Paul]{Niharika Chakrabarty Paul}
\address{Max Planck Institute for Mathematics in the Sciences}
\email{niharika.paul@mis.mpg.de} 
\author[\tiny Irem Portakal]{Irem Portakal}
\address{Max Planck Institute for Mathematics in the Sciences}
\email{mail@irem-portakal.de}
\begin{document}

\title[\tiny Torus Actions on Matrix Schubert and Kazhdan-Lusztig Varieties,
and their Links to Statistical Models]{Torus Actions on Matrix Schubert and Kazhdan-Lusztig Varieties,\\
and their Links to Statistical Models}

\begin{abstract}
We investigate the toric geometry of two families of generalised determinantal varieties arising from permutations: Matrix Schubert varieties ($\overline{X_w}$) and Kazhdan-Lusztig varieties ($\Nvw$). Matrix Schubert varieties can be written as $\overline{X_w} = Y_w \times \mathbb C^d$, where $d$ is maximal. 
We are especially interested in the structure 
and complexity of these varieties $Y_w$ and $\Nvw$ under the so-called usual torus actions. 
In the case when $Y_w$ is toric, we provide a full characterisation of the simple reflections $s_i$ that render ${Y_{w \cdot s_i}}$ toric, as well as the corresponding changes to the weight cone.
For Kazhdan-Lusztig varieties, we consider how moving one of the two permutations $v,w$ along a chain in the Bruhat poset affects their complexity. 
Additionally, we study the complexity of these varieties, for permutations $v$ and $w$ of a specific structure.
Finally, we consider the links between these determinantal varieties and two classes of statistical models; namely conditional independence and quasi-independence models.
\end{abstract}

\maketitle

\section{Introduction and basic notions}
Matrix Schubert varieties, introduced by Fulton \cite{fulton1992flags}, arise naturally as orbit closures under group actions on spaces of matrices associated to permutations and are pivotal in the study of degeneracy loci flagged vector bundles, and Schubert calculus.  Kazhdan-Lusztig (KL) varieties, which generalise the setting of matrix Schubert varieties, are associated with pairs of elements in a Weyl group, typically pairs of permutations. These varieties were investigated in \cite{woo2008governing} in the study of singularities of Schubert varieties of flag manifolds. The Castelnuovo-Mumford regularity of a matrix Schubert variety can be computed as the difference between the highest-degree and lowest-degree homogeneous components of its $K$-polynomial (\cite{rajchgot2021degrees}). These $K$-polynomials correspond precisely to Grothendieck polynomials, introduced by Lascoux and Schützenberger \cite{lascoux1982structure} as polynomial representatives for the classes of structure sheaves in the $K$-theoretic Schubert calculus of the variety of complete flags. Knutson and Miller \cite{knutson2005geometry} also showed that Schubert polynomials appear as multidegrees of matrix Schubert varieties.

The study of torus actions on algebraic varieties, such as matrix Schubert and Kazhdan-Lusztig varieties, plays a crucial role in understanding their algebro-geometric properties. In this paper, we use the language of affine normal $T$-varieties admitting effective torus actions which generalise toric varieties by allowing positive \emph{complexity}, defined as the difference between the dimension of the variety and the dimension of the acting torus~\cite{altmann2012geometry}. A $T$-variety can be described by its p-divisor, a partly combinatorial object consisting of a geometric part, a special quotient of $X$ by the torus action, and a combinatorial part, a fan of rational polyhedral cones. When the complexity is small, the combinatorial part encodes much of the structure of the $T$-variety. In particular, if the complexity is zero, then $X$ is toric. Matrix Schubert varieties arise as orbit closures under the action of $B \times B$, where $B$ is a Borel subgroup of the general linear group $GL_n$. The matrix Schubert variety $\overline{X_w}$ for a permutation $w \in S_n$ is isomorphic to $Y_w \times \mathbb{C}^d$, where $d$ is maximal. We consider the \emph{usual torus action} (restricted, in particular, to $Y_w$), which refers to the restriction of the Borel subgroup action to its diagonal torus. Similarly, Kazhdan-Lusztig varieties inherit this torus action from their ambient Schubert varieties. Recent work has classified the possible complexities of matrix Schubert varieties and Kazhdan-Lusztig varieties under the usual torus action, showing that in the former case, complexity one does not occur, and providing a combinatorial characterisation of weight cones as edge cones of acyclic directed graphs~\cite{donten2021complexity}. By counting the number of connected components of these graphs, one can readily determine the dimension of the torus action (Lemma~\ref{lem:weightcondim}). These advances contribute to a finer understanding of the orbit structure and affine geometry of these varieties, and highlight the interplay of algebraic, combinatorial, and geometric techniques in the study of $T$-varieties and their applications to matrix Schubert and Kazhdan-Lusztig varieties~\cite{portakal2023rigid}.

This paper studies the usual torus action on matrix Schubert and Kazhdan-Lusztig varieties using a combinatorial framework mostly based on the structure of opposite Rothe diagrams and directed acyclic graphs. Moreover, it investigates applications of these varieties in statistical models, highlighting the interplay between algebraic combinatorics and algebraic statistics. The organisation of this paper is as follows. In Section~\ref{sec: MS varieties}, we begin with basic definitions for matrix Schubert varieties such as (opposite) Rothe diagrams, the usual torus action, and Fulton's determinantal conditions, focusing on the toric class of matrix Schubert varieties under the usual torus action. Here, the weight cone (which is the convex polyhedral cone associated to the toric $Y_w$) is the edge cone of a bipartite directed acyclic graph $G_w$. Building on two characterisations of toric matrix Schubert varieties (\cite[Theorem~1.6]{stelzer2025matrix} and \cite[Theorem~3.4]{escobar2016toric}), we study how the complexity of the usual torus action of a toric matrix Schubert variety $Y_w$ changes when the permutation $w$ is right-multiplied by a simple reflection. Moreover, we analyze how the directed acyclic graph $G_w$ and consequently the weight cone change under such operations, thereby characterising all such possible toric varieties in detail. Let $e_1, \cdots e_n$ and $f_1, \cdots f_n$ denote the standard basis for $\mathbb Z^n \times \mathbb Z^n$.

\begin{theorem}[{Theorem~\ref{thm: all toric mS}, Corollary~\ref{cor: toric cases}, \ref{cor: change of the weight cone}}]
   Let $\overline{X_w} = Y_w \times \mathbb{C}^d$ be a matrix Schubert variety such that $Y_w$ is toric with respect to the usual torus action. All simple reflections $s_i$ for which $Y_{w \cdot s_i}$ is toric are explicitly determined. In particular, if $\dim(Y_w) = \dim(Y_{w \cdot s_i})$, then either $Y_w = Y_{w \cdot s_i}$ or the weight cone undergoes one of the following changes:
   \begin{itemize}
       \item a new ray generator $e_{w(i)} - f_{i}$ is added;
       \item the ray generator $e_{w(i+1)} - f_{i}$ is removed.
   \end{itemize}
\end{theorem}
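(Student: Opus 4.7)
The plan is to analyse how the combinatorial data underlying the weight cone of $Y_w$ changes when $w$ is replaced by $w \cdot s_i$, and to invoke the existing characterisations of toric $Y_w$ to decide when toricity is preserved. Since the ray generators of the weight cone are in bijection with the edges of the bipartite DAG $\Gw$, and these edges are in bijection with the boxes of the opposite Rothe diagram $\Dc(w)$, it suffices to track the symmetric difference $\Dc(w)\,\triangle\,\Dc(w \cdot s_i)$ and then apply the toricity criterion to $w \cdot s_i$.

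First, I would describe this symmetric difference explicitly. Right multiplication $w \mapsto w \cdot s_i$ only swaps the values at positions $i$ and $i+1$, so a direct check of the defining inequalities of $\Dc$ shows that $\Dc(w)$ and $\Dc(w \cdot s_i)$ differ only in a small, explicit region concentrated in column $i$. A finite case split according to whether $i$ is an ascent or descent of $w$, and according to how the values $w(i),w(i+1)$ sit relative to $i$, pins down which boxes are added or removed; under the correspondence $(j,k) \leftrightarrow e_j - f_k$ these turn out to be exactly the candidate boxes $(w(i),i)$ and $(w(i+1),i)$ appearing in the statement.

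Next, I would use the characterisations from \cite[Theorem~1.6]{stelzer2025matrix} and \cite[Theorem~3.4]{escobar2016toric} to decide, case by case, which of these changes yield a toric $Y_{w \cdot s_i}$. Both characterisations can be phrased as an acyclicity or forbidden-pattern condition on $\Gw$, so in each case one only needs to check that the local change does not propagate to a global violation. Running through the surviving cases extracts the explicit list of $s_i$ for which $Y_{w \cdot s_i}$ is toric (this is Theorem~\ref{thm: all toric mS} and Corollary~\ref{cor: toric cases}) and, restricting to cases with $\dim Y_w = \dim Y_{w \cdot s_i}$, identifies the only two ways in which the ray generators of the weight cone can change, exactly as in Corollary~\ref{cor: change of the weight cone}; in the residual case the edge set of $\Gw$ is unchanged, so $Y_w = Y_{w \cdot s_i}$.

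The main obstacle is this last verification: showing that a local change to $\Dc$ in column $i$ cannot conspire with distant boxes to create a forbidden sub-configuration or to break the acyclicity of $\Gw$ far from the swap. One exploits the fact that the forbidden configurations in the Stelzer and Escobar--Meszaros criteria involve only constantly many boxes, so any forbidden pattern created by the swap must already be almost present in $\Dc(w)$; this lets one rule out cross-interactions by a short, local case check. Once this locality is pinned down, the ray-by-ray accounting needed for the two corollaries is immediate.
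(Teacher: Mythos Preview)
Your overall strategy --- track the combinatorial change caused by the swap and then apply the known toricity criteria --- is the same as the paper's, but the execution in your sketch contains a concrete error that would derail the argument. You write that the edges of $G_w$ (and hence the ray generators) are in bijection with the boxes of $\Dc(w)$, and then propose to track $\Dc(w)\triangle\Dc(w\cdot s_i)$. This is not right: the edges of $G_w$ correspond to $L(w)=\SW(w)\setminus\dom(w)$, not to $\Dc(w)$. In particular $(w(i),i)$ is the position of a $1$ in the permutation matrix and is never in $\Dc(w)$, so your claimed symmetric difference cannot be ``exactly the candidate boxes $(w(i),i)$ and $(w(i+1),i)$''. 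Moreover, right multiplication by $s_i$ swaps columns $i$ and $i+1$ of the permutation matrix, so even $\Dc(w)\triangle\Dc(w\cdot s_i)$ lives in both columns $i$ and $i+1$ and is typically much larger than two boxes. The passage from $\Dc$ to $L$ is global (it depends on $\SW$ and $\dom$), so you cannot read off the change in $L(w)$ just from a local change in $\Dc(w)$ without further work.

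The paper's proof does not in fact track Rothe-diagram differences. It first develops an explicit ``staircase'' description of $\Dc(w)$ valid for toric $Y_w$ (the columns partition into blocks $\beta[j]$, $h_j$, $\alpha[j]$ with prescribed shapes), and then runs a direct pattern-avoidance argument on the one-line notation: for each possible location of $M$ in this staircase indexing, it determines exactly which subwords of $w$ can turn into a $3412$ or $4312$ pattern after the swap. The staircase structure is what makes the case analysis finite and the ``locality'' you allude to genuinely checkable; without it, your last paragraph (``any forbidden pattern created by the swap must already be almost present in $\Dc(w)$'') is not a proof but a hope. The identification of the cases in Corollary~\ref{cor: change of the weight cone} then falls out of this classification by inspecting, case by case, how $L(w)$ (not $\Dc(w)$) changes.
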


Section~\ref{sec: KL varieties} is devoted to the introduction of Kazhdan-Lusztig varieties and the weight cone associated with the usual torus action and its related directed acyclic graph $G_{v,w}$. Noting that a refinement of the underlying undirected graph of $G_{v,w}$ is isomorphic to the graph used for the torus action on Richardson varieties in~\cite{lee21toricbruhatintpolytopes}, we characterise certain complexity properties of Kazhdan-Lusztig varieties. Namely, we determine how the complexity changes when one of the permutations $v$ or $w$ is moved along a chain in the Bruhat poset. In particular, the complexity of $\mathcal N_{v,w}$ is either the same or one more than the complexity of $\mathcal N_{v,w'}$ where $w$ is an atom of $w'$ (see Proposition~\ref{prop: complexity change}). In Lemma~\ref{lem:ccs of GC}, we show that if $\mathcal N_{v,w}$ is toric, then the connected components of $G_{v,w}$ are exactly the subsets of vertices for which the values of $v$ and $w$ match cyclically along these vertices. Let $w = w'\cdot t_{a,b}$ be an atom of $w'$ and assume that $\mathcal N_{v,w'}$ is toric. Then, $\mathcal N_{v,w}$ is toric if and only if the vertices $a$ and $b$ are not in the same connected component of $G_{v,w}$ (Proposition~\ref{prop: extend toric interval}). In a similar manner, in Lemma~\ref{lem: gluing two intervals}, we characterise the case when $\mathcal N_{u,w}$ is toric where both $\mathcal N_{u,v}$ and $\mathcal N_{v,w}$ are toric. 

In Section~\ref{sec: toric families of KL}, to better understand the complexity of $\Nvw$ in more general situations, we analyse the connected components of $G_{v,w}$ in more detail. In particular, we focus on cases where there are no actual or unexpected zeros (Definition~\ref{def: unexpected zero}) and relate these findings to the structure of the opposite Rothe diagram $\Dc(v)$.

\begin{theorem}[{Lemma~\ref{lem: connected components for no unexpected zeros}, \ref{lem: isolated vertices for no unexpected zeros}}]
    Let $v,w \in S_n$ be such that there are no unexpected zeros. Then, the set of connected components of $G_{v,w}$ with more than one vertex has the same cardinality as the set of 
    coordinates in $\Dc(v)$ with no elements in the same row west of them and none in the same column south of them.
    Moreover, the number of isolated vertices of $\Gvwt$ is the same as the cardinality of the set of $1$s on the antidiagonal of $v$ that do not share a row or column with the opposite Rothe diagram of $v$.
\end{theorem}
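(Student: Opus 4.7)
The plan is to translate both assertions into combinatorial statements about the opposite Rothe diagram $\Dc(v)$. Under the no-unexpected-zeros hypothesis I expect the edges of $\Gvwt$ to be in natural bijection with the boxes of $\Dc(v)$: if, as in the $v = e$ case, the vertex set of $\Gvwt$ splits into row vertices and column vertices with an edge $\{i,j\}$ whenever $(i,j)$ encodes a nontrivial defining constraint of $\Nvw$, then absence of unexpected zeros is precisely what prevents these edges from being killed. Establishing this correspondence cleanly from the Fulton-style equations behind $\Nvw$ is the first step, and I would phrase it as a lemma preceding the main argument.

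Given this correspondence, the first claim reduces to a statement about the incidence pattern whose hyperedges are the rows and columns meeting $\Dc(v)$. Two boxes $(i,j)$ and $(i',j')$ are linked whenever they share a row or column, and a connected component of $\Gvwt$ of size at least two corresponds exactly to a transitive-closure equivalence class of boxes under this relation. I would then show that each such class contains a unique ``northeastmost'' representative, namely the unique box of the class with no box of $\Dc(v)$ in the same row to the west and no box of $\Dc(v)$ in the same column to the south. Uniqueness follows from a short direct argument: if two distinct boxes in one class were both northeastmost, walking along any path connecting them through shared rows and columns would force a box violating one of the two northeastmost conditions. Existence follows by taking a lexicographically extremal element of the class and verifying, using the no-unexpected-zeros assumption, that its row-west and column-south neighbours do not belong to $\Dc(v)$.

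For the second claim, an isolated vertex of $\Gvwt$ is a row or column index incident to no edge, which by the correspondence above means it intersects no box of $\Dc(v)$. Because the vertices of $\Gvwt$ are indexed by the $1$s of the permutation $v$, an antidiagonal $1$ at position $(i,n+1-i)$ of $v$ contributes an isolated vertex precisely when neither row $i$ nor column $n+1-i$ meets $\Dc(v)$. This is exactly the condition of not sharing a row or column with $\Dc(v)$, so counting these antidiagonal $1$s gives the claimed cardinality. I would also briefly justify why only antidiagonal $1$s can become isolated in this way: off-antidiagonal $1$s of $v$ are forced by the shape of $\Dc(v)$ to share a row or column with some box of the diagram, so they are never isolated.

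The main obstacle is the opening step: controlling the edges of $\Gvwt$ in terms of $\Dc(v)$ under the no-unexpected-zeros hypothesis. Making this translation rigorous requires unpacking how the defining equations of the Kazhdan-Lusztig ideal interact with both $v$ and $w$, and confirming that ``no unexpected zeros'' is exactly the hypothesis ruling out edges lost to algebraic cancellations. Once this is in hand, both claims follow from the combinatorial analysis sketched above with essentially no further work beyond careful bookkeeping of the bijections.
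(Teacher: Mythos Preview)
Your overall plan---identify edges of $\Gvwt$ with boxes of $\Dc(v)$, then argue combinatorially on the diagram---is exactly the paper's strategy, but two points go wrong.

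First, your model of $\Gvwt$ is incorrect. The graph is \emph{not} bipartite on row and column vertices as in the matrix Schubert graph $G_w$: its vertex set is $[n]$, and a box $(i,j)\in\Dc(v)$ contributes the edge $v(j)\to i$. A single vertex $k$ therefore appears both as a target (when row $k$ meets $\Dc(v)$) and as a source (when column $v^{-1}(k)$ meets $\Dc(v)$). Two boxes $(i,j)$ and $(k,l)$ give adjacent edges not only when $i=k$ or $j=l$, but also when $v(j)=k$ or $v(l)=i$. Your connectivity analysis via ``shared row or column'' omits these diagonal adjacencies. It happens not to matter for counting components---if $v(j)=k$ one checks $(i,l)\in\Dc(v)$, which bridges the two boxes by a row step and a column step---but this must be argued, and your bipartite picture obscures rather than proves it.

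Second, you have the difficulty exactly backwards. What you flag as the main obstacle---the edge/box correspondence under ``no unexpected zeros''---is immediate from the definitions: an unexpected zero is by definition a box $(i,j)\in\Dc(v)$ with $t_{v(j),i}v\not\leq w$, i.e.\ a box whose edge is missing from $\Gvwt$, so the hypothesis literally says every box gives an edge. There is nothing algebraic to unpack. By contrast, your uniqueness argument for the distinguished box (which you call ``northeastmost'' though the condition you write is that of a \emph{south-west} corner) is where the content is, and the path-walking sketch does not deliver it: intermediate boxes on a row/column path may freely have neighbours to the west or south, so no contradiction emerges. The paper's argument is a one-liner using the rectangle closure of $\Dc(v)$: if $(i,j)$ and $(k,l)$ are south-west corners in the same component with $i<k$, $j<l$, then $(k,j)\in\Dc(v)$ lies south of $(i,j)$ in its column, contradicting the corner condition. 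Your treatment of the isolated vertices, on the other hand, is essentially the paper's.
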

We next investigate the cycles of the underlying undirected graph of $G_{v,w}$ in Section~\ref{subsec: cycles of Gvw}. In particular, when $\mathcal{N}_{v,w}$ is toric, these cycles correspond to the generators of the ideal by \cite[Proposition 4.3]{gitler2010ring}. We compute the cyclomatic number of $G_{v,w}$, using a certain subset of $\Dc(v) \times \Dc(v)$. The difference between the size of this subset and the number of elements in the opposite Rothe diagram of $w$ yields the complexity of $\mathcal{N}_{v,w}$ in the absence of unexpected or actual zeros. Finally, this section addresses the complexity of specific families of Kazhdan-Lusztig varieties $\mathcal{N}_{v,w}$, including the case where the opposite Rothe diagram of $w$ is a rectangle (Proposition~\ref{prop: rectangle comp}) and the case $\mathcal{N}_{v, w_0 t}$, where $t$ is a transposition and $w_0$ is the longest permutation (Proposition~\ref{prop: change w for complexity}).

Section~\ref{sec: statistical models} is devoted to the relation of matrix Schubert varieties and Kazhdan-Lusztig varieties to two statistical models: Gaussian conditional independence (CI) and quasi-independence models. We begin by introducing symmetric and lower triangular matrix Schubert varieties and determine their dimension and complexity in terms of the opposite Rothe diagram in Section~\ref{subsec: ci models}. Unlike classical matrix Schubert varieties, this allows the full range of possible complexities, including complexity one. In previous work~\cite{fink2016matrix}, the CI ideal associated to a CI statement realised as a matrix Schubert variety is studied. In Proposition~\ref{prop: complexity Mschu}, we determine the complexity of matrix Schubert varieties in this case and generalise this to Kazhdan-Lusztig varieties in Lemma~\ref{KL CI}, determining their complexity as well. We end the section by noting that the toric matrix Schubert varieties are quasi-independence models and, in Theorem~\ref{thm: rational MLE}, prove that these have rational maximum likelihood estimate. Alongside the \texttt{MatrixSchubert} package for Macaulay2~\cite{almousa2025matrixschubert}, we utilise the functions available in~\cite{NiharikaCP_KLvarieties} for computations of the examples throughout this paper.

\subsubsection*{Background on $T$-varieties} We use the notation in \cite{altmann2012geometry}, and refer the reader to this paper for further details. Let $T$ be a torus and $\mathrm{M}(T)$ be its associated character lattice. We then denote by $\mathrm{M}(T)_{\mathbb{R}}:= \mathrm{M}(T) \otimes_{\mathbb Z} \mathbb R$, the real vector space that is given as the span of this character lattice. We further say that the affine normal variety $X$ admits an \emph{effective $T$-torus action} if the set $S := \cap_{p\in X}T_p$ is empty, where $T_p = \{t\in T|t\cdot p=p\}$.  
\begin{definition}
    An affine normal variety $X$ is a \emph{$T$-variety of complexity $d$} if it admits an effective $T$-torus action such that $\dim(T) = \dim(X)-d.$
\end{definition}
\noindent Let us assume that $X$ admits an effective $T$-torus action. Now, we wish to study the complexity via convex geometry, in particular via the \emph{weight cone}. The weight cone associated to a torus $T$ is the cone that is generated by all of the weights of the torus action on $X$ in $\mathrm{M}(T)_{\mathbb{R}}$. Let $p\in X$ be generic. Then, $\overline{T\cdot p}$ is the affine normal toric variety that is associated to the weight cone of the $T$-torus action. Thus, $\dim(\overline{T\cdot p})$ is equal to the dimension of the weight cone of the toric action. Furthermore, $\dim(\overline{T\cdot p}) = \dim(T)$, as the action is effective. Hence, we can define the complexity as
$\dim(X) - \dim(\text{associated weight cone}).$

Now, if the $T$-action is not effective, then the set $S$ is non-trivial, but the $T/S$-action on $X$ is effective. Moreover, $tS\cdot p = t\cdot p$, which implies that the weights of the $T/S$-action are the same as those of the $T$-action. Hence, the normal variety $X$ has the same complexity with respect to both actions. In this case we abuse notation and formally mean $T/S$ when talking about $T$.

\subsubsection*{Toric ideals from directed acyclic graphs (DAGs)} \noindent We refer the reader to \cite{gitler2010ring,kara2023rigid} for further details. Let $G$ be a DAG with edge set $E$ and vertex set $V$. Then, each edge $e=(a\rightarrow b)\in E$ goes from the vertex $a$ to the vertex $b$.  
We define the edge cone associated to the graph $G$ as 
$$\sigma_G = \mathrm{Cone}(e_i-e_j|(i\rightarrow j)\in E) \subseteq M(T)_{\mathbb R}.$$
Here, $e_i$ and $e_j$ are the basis vectors that correspond to vertices $i$ and $j$. The affine normal toric variety associated to $G$ is then $\mathrm{Spec}(\mathbb{C}[\sigma_G \cap \mathrm{M}(T)])$. Toric matrix Schubert and toric KL varieties arise as these varieties (see e.g.\ Section~\ref{sec: MS varieties}, \ref{sec: KL varieties}). We can calculate the dimension of the edge cone via the following lemma, which will be used during the study of the complexity of the usual torus action.

\begin{lemma}[\cite{donten2021complexity}]\label{lem:weightcondim}
    Let $G$ be a DAG with $v$ vertices and $c$ connected components. The dimension of the edge cone $\sigma_G \subseteq M(T)_{\mathbb R}$ is given by $v-c$.
\end{lemma}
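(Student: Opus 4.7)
The plan is to compute $\dim \sigma_G$ by identifying it with the rank of the signed incidence matrix of $G$, and then to show that this rank equals $v - c$ by a spanning-tree argument applied connected component by connected component. Note that since $\sigma_G$ is the conic hull of the vectors $\{e_i - e_j : (i \to j) \in E\}$, its dimension as a polyhedral cone coincides with the dimension of the $\mathbb{R}$-linear span of its generators; so orientation of the edges is irrelevant for this computation, and it suffices to think of $G$ as an undirected graph.

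First, I would establish the upper bound. Let $V_1, \ldots, V_c$ denote the vertex sets of the connected components of $G$, and consider the linear functionals $\ell_k : \mathbb{R}^V \to \mathbb{R}$, $\ell_k(x) = \sum_{i \in V_k} x_i$, for $k = 1, \ldots, c$. These $c$ functionals are linearly independent (they are supported on disjoint index sets), and every generator $e_i - e_j$ of $\sigma_G$ lies in $\bigcap_k \ker \ell_k$, since any edge of $G$ connects two vertices that lie in the same component. Hence $\mathrm{span}(\sigma_G) \subseteq \bigcap_k \ker \ell_k$, and the latter has dimension $v - c$.

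Next, I would prove the matching lower bound by producing $v - c$ linearly independent vectors inside $\sigma_G$. For each connected component $V_k$, choose a spanning tree $T_k$ of the component, which has exactly $|V_k| - 1$ edges. I would then verify by induction on $|V_k|$ (removing a leaf at each step) that the edge vectors $\{e_i - e_j : \{i,j\} \in T_k\}$ are linearly independent in $\mathbb{R}^{V_k}$; the leaf's unique incident edge contributes a basis vector that no other edge vector touches, so any linear dependence must restrict to a dependence on the smaller tree. Collecting these across all components yields $\sum_{k=1}^{c}(|V_k|-1) = v - c$ linearly independent vectors in $\mathrm{span}(\sigma_G)$, matching the upper bound.

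The argument is mostly bookkeeping, so there is no real obstacle; the only subtle point is ensuring that the spanning-tree edge vectors in different components are independent of one another, but this is immediate since they have disjoint supports in $\mathbb{R}^V$. Combining the two bounds gives $\dim \sigma_G = v - c$.
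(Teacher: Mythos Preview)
Your argument is correct and is the standard proof of this fact: bound $\dim\sigma_G$ from above by observing that every generator $e_i-e_j$ lies in the common kernel of the $c$ independent ``component-sum'' functionals, and bound it from below by exhibiting $v-c$ independent generators coming from spanning trees of the components. The only comment is that the paper does not actually supply its own proof of this lemma; it is quoted from \cite{donten2021complexity} and used as a black box. So there is nothing to compare against---your proof simply fills in a detail the paper outsources.
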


\section{Matrix Schubert varieties}\label{sec: MS varieties}

\noindent We now introduce some preliminaries on the class of varieties called matrix Schubert varieties. A large section of this paper focuses on this class of variety. Each matrix Schubert variety is specified by a permutation. We, thus, first recall some basic theory on permutations, before continuing on to define matrix Schubert varieties. 

Let $S_n$ be the symmetric group on $n$ elements. A \emph{permutation} $w$ is a bijective map $w: S_n\rightarrow S_n$. Denote by $t_{i,j}$, the \emph{transposition of the elements $i<j$} with $i,j\in[n]$ and by $T_n$, the set of transpositions on $S_n$. Each permutation in $S_n$ can be written as a product of some transpositions of adjacent elements $s_i := t_{i,i+1}$; the so-called \emph{simple reflections}. We also write a permutation in \emph{one-line notation}. That is, we express $w$ as $w(1)\cdots w(n)$. For readability, we sometimes also write $w$ as $(w(1), \ldots, w(n))$. Note that multiplying a transposition $t_{i,j}$ from the left switches the values $i$ and $j$, whereas multiplying it from the right switches the positions $i$ and $j$. Further, we can construct the permutation matrix that is associated to $w$, by placing $1$'s at $(w(i),i)$ for $i\in[n]$ and $0$'s everywhere else. We also call this matrix $w$, in an abuse of notation throughout this paper.
Let us consider an example to visualise the three representations.

\begin{example}
    Let $w=3241 \in S_4$, in one-line notation, or, equivalently, $w = s_1s_2s_1s_3$. The permutation matrix is given by
    $$w = \left(\begin{array}{cccc}
       0 & 0 & 0 & 1 \\
       0 & 1 & 0 & 0 \\
       1 & 0 & 0 & 0 \\
       0 & 0 & 1 & 0 \\       
    \end{array}\right).$$
\end{example}

\subsection{Preliminaries on matrix Schubert Varieties}\label{subsec: prelim mSV} 
We follow the convention for matrix Schubert varieties from \cite{donten2021complexity}. Let $B \subseteq GL_n$ be the group of upper triangular invertible matrices of size $n\times n$. Then, we define the following action of $B\times B$ on $\mathbb{C}^{n\times n}$, by:
\begin{align*}
    (B \times B) \times \mathbb{C}^{n\times n} &\rightarrow \mathbb{C}^{n\times n}\\
    ((X,Y),M) &\mapsto XMY^{-1}
\end{align*}
For some matrix $M \in \mathbb{C}^{n\times n}$ and $a, b \in [n]$, define $M_{[a,b]}$ to be the submatrix of $M$ formed of the rows $a, a+1, \ldots, n$ and columns $1, \ldots, b$; and $r_M(a,b)$ to be the rank of this matrix. Then, we note that the matrix $M$ is in the orbit $BwB$ if and only if $r_M(a,b) = r_w(a,b)$ for all $a, b \in [n]$. Let the $(i,j)$-th element of $M$ be given by $z_{ij}$.

\begin{definition}
    The \textit{matrix Schubert variety} associated to the permutation $w \in S_n$ is $\overline{X_w} \coloneqq \overline{BwB} \subset \mathbb C^{n \times n}$. The closure is taken with respect to the Zariski topology. The variety is defined in the polynomial ring $\mathbb{C}[z_{ij}|i,j\in[n]]$. 
\end{definition}

\noindent While this seems quite abstract , in practice, we can describe these varieties in terms of conditions derived from certain combinatorial structures.

\begin{definition}
    Let $w \in S_n.$
    \begin{itemize}
        \item The \textit{opposite Rothe Diagram} of $w$ is defined as $D^{\circ}(w) \coloneqq \{(i, j) \ | \  w(j)<i, w^{-1}(i)>j\}$.
        \item The \textit{essential set} of $w$ is the set of north-east corners of each connected component of $D^{\circ}(w)$, and is denoted by $\Ess(w)$. 
    \end{itemize}
\end{definition}

\noindent One can draw the opposite Rothe diagram by considering the matrix $w$ as an $n \times n$ grid with $1$s in some positions and ruling out every square that is to the north or to the east of a $1$. The opposite Rothe diagram is then the set of boxes that is not north or east of a $1$. Note that in our work, we follow a certain convention for the construction of matrix Schubert varieties, which is specified by the orientation of opposite Rothe diagrams. The convention used differs throughout the literature. We use opposite Rothe diagrams, instead of Rothe diagrams, which correspond to a $B_- \times B$ action.  Here, $B_-$ denotes the group of lower triangular ${n \times n}$ matrices. 
\begin{example} 
    Let us consider $w=45231$. The following diagram represents $D^\circ(w)$. In this case, $\Ess(w)$ is all of $\Dc(w)$, as each connected component of $\Dc(w)$ is given by one element. \\
\begin{center}
\begin{tikzpicture}[scale = 0.5]

  \draw[step=1.0,black,thin] (0,0) grid (5,5);

    \node at (0.5,1.5) {{$1$}}; 
   \node at (1.5,0.5) {{$1$}}; 
   \node at (2.5,3.5) {{$1$}}; 
   \node at (3.5,2.5) {{$1$}};
   \node at (4.5,4.5) {{$1$}};

  \draw [line width = 0.0mm, draw=CornflowerBlue, fill=CornflowerBlue, draw opacity = 0.5, fill opacity=0.5]
       (0,0) -- (0,1) -- (1,1) -- (1,0)  --cycle;
   \draw [line width = 0.0mm, draw=CornflowerBlue, fill=CornflowerBlue, draw opacity = 0.5, fill opacity=0.5]
       (2,2) -- (2,3) -- (3,3) -- (3,2)  --cycle;

  \draw[line width = 0.7mm, draw=Gray, draw opacity = 0.4]
  (0.5, 2) -- (0.5,5);
  \draw[line width = 0.7mm, draw=Gray, draw opacity = 0.4]
  (1.5, 1) -- (1.5,5);
  \draw[line width = 0.7mm, draw=Gray, draw opacity = 0.4]
  (2.5, 4) -- (2.5,5);
  \draw[line width = 0.7mm, draw=Gray, draw opacity = 0.4]
  (3.5, 3) -- (3.5,5);
  \draw[line width = 0.7mm, draw=Gray, draw opacity = 0.4]
  (4.5, 5) -- (4.5,5);

  \draw[line width = 0.7mm, draw=Gray, draw opacity = 0.4]
  (2, 0.5) -- (5,0.5);
  \draw[line width = 0.7mm, draw=Gray, draw opacity = 0.4]
  (1, 1.5) -- (5,1.5);
  \draw[line width = 0.7mm, draw=Gray, draw opacity = 0.4]
  (4, 2.5) -- (5,2.5);
  \draw[line width = 0.7mm, draw=Gray, draw opacity = 0.4]
  (3,3.5 ) -- (5,3.5);
  \draw[line width = 0.7mm, draw=Gray, draw opacity = 0.4]
  (5, 4.5) -- (5,4.5);
\end{tikzpicture}
\end{center}

\end{example}
\noindent Now, in practice, we use the following theorem as the definition for generators of the matrix Schubert ideal.
\begin{theorem}[{\cite[Proposition 3.3, Lemma 3.10]{fulton1992flags}}]\label{thm:fultonessential}
    The matrix Schubert Variety $\overline{X_w} \subset \mathbb C^{n \times n}$ is defined as a scheme by the determinants that encode the inequalities $r_M(a,b) \leq r_w(a,b)$ for each $(a,b) \in \Ess(w)$. The defining determinantal conditions are called the \emph{Fulton conditions}.    
    Its dimension is $\dim \overline{X_w} = n^2 - \Dc(w)$.
\end{theorem}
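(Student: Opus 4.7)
The plan is to establish the scheme-theoretic description via Fulton's conditions at essential positions, and then to compute the dimension. I would split the argument into a set-theoretic inclusion, an essential-set reduction, a primality (scheme-theoretic) step, and finally the dimension calculation.

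For one inclusion, every $M \in BwB$ satisfies $r_M(a,b) = r_w(a,b)$ exactly, because left multiplication by $X \in B$ and right multiplication by $Y^{-1}$ for $Y \in B$ restrict to upper-triangular row and column operations on the bottom-left submatrix $M_{[a,b]}$ and hence preserve its rank. By upper semicontinuity of rank, $\overline{X_w} \subseteq \{M : r_M(a,b) \leq r_w(a,b)\ \text{for all}\ a,b\}$. To reduce the defining conditions to $\Ess(w)$, I would use monotonicity of rank under removing a top row or rightmost column, together with the combinatorial fact that shifting $(a,b)$ north or east through $\Dc(w)$ leaves $r_w$ constant until reaching a NE corner, i.e.\ an essential position; the Fulton conditions at $\Ess(w)$ then imply those at all $(a,b)$ by transferring through these shifts using $r_M$-monotonicity.

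The main obstacle will be the reverse inclusion together with the scheme-theoretic identification, equivalently showing that the Fulton ideal $I_w$ generated by the essential determinants is prime. My preferred route is a Gröbner degeneration of Knutson-Miller type: under an antidiagonal term order, $\mathrm{in}(I_w)$ becomes the Stanley-Reisner ideal of a subword (pipe-dream) complex, which is shellable, hence Cohen-Macaulay and equidimensional. Combined with the irreducibility of the orbit closure $\overline{BwB}$ and the dimension computation below, this forces $V(I_w) = \overline{X_w}$ scheme-theoretically and primality of $I_w$.

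For the dimension, I would compute $\dim \overline{BwB} = 2 \dim B - \dim \mathrm{Stab}_{B \times B}(w)$. The equation $XwY^{-1} = w$ forces $Y = w^{-1}Xw$, so the stabiliser is identified with $\{X \in B : w^{-1}Xw \in B\}$; the extra vanishing conditions on upper-triangular entries of $X$ are indexed precisely by the inversions of $w$, giving $\dim \overline{X_w} = \dim B + \ell(w) = n + \binom{n}{2} + \ell(w)$. A direct bijection between entries of the opposite Rothe diagram and non-inversions of $w$ (each $(i,j) \in \Dc(w)$ records a pair with $w(j) < i$ and $w^{-1}(i) > j$, which corresponds to a non-inversion of $w$) yields $|\Dc(w)| = \binom{n}{2} - \ell(w)$, and combining the two identities produces $\dim \overline{X_w} = n^2 - |\Dc(w)|$ as claimed.
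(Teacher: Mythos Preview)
The paper does not prove this theorem; it is quoted from Fulton's original work \cite{fulton1992flags} and used as a black box throughout. So there is no paper proof to compare against.

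Your outline is sound and the dimension computation is correct: the stabiliser calculation gives $\dim\overline{BwB}=\dim B+\ell(w)=n+\binom{n}{2}+\ell(w)$, and the bijection $(i,j)\mapsto (j,w^{-1}(i))$ between $\Dc(w)$ and non-inversions of $w$ yields $|\Dc(w)|=\binom{n}{2}-\ell(w)$, from which $n^2-|\Dc(w)|$ follows. Two remarks. First, invoking the Knutson--Miller Gr\"obner degeneration for primality is valid but anachronistic: Fulton's 1992 argument predates that machinery and instead proceeds by a direct analysis of the rank conditions and the $B\times B$-orbit stratification. Second, your primality step as written is slightly underspecified: Cohen--Macaulayness and equidimensionality of $V(I_w)$ together with irreducibility of $\overline{X_w}\subseteq V(I_w)$ do not by themselves rule out extra components of $V(I_w)$. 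You still need either the set-theoretic reverse inclusion (any $M$ satisfying all rank bounds lies in some $BvB$ with $v\leq w$, via the orbit classification by partial permutation matrices) or the full Knutson--Miller argument that the initial ideal is radical and has the correct degree, forcing $I_w$ to be prime. Either closure is routine, but one of them should be named.
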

\noindent We illustrate the above theorem via the example introduced previously.
\begin{example}
    Continue with $w$ as $45231$. Then, the essential set is given by $\{(3,3),(5,1)\}$. Thus, Fulton's determinantal conditions look as follows:
    \begin{itemize}
        \item $r_M(5,1) \leq r_w(5,1) = 0$. Thus, $z_{51} =0$.
        \item $r_M(3,3) \leq r_w(3,3) = 2$. This tells us that the determinant of $M_{[3,3]}$ is zero.
    \end{itemize}
    We can write down the ideal associated to the $23$-dimensional matrix Schubert variety $\Xw$ as
    $$\left\langle z_{51}, \left|M_{[3,3]}\right| \right\rangle = 
    \left\langle z_{51}, \left|\begin{array}{ccc}
        z_{31} & z_{32} & z_{33} \\
        z_{41} & z_{42} & z_{43} \\
        z_{51} & z_{52} & z_{53} \\
    \end{array}\right| \right\rangle.$$
\end{example}

\subsection{The usual torus action} In order to interpret matrix Schubert varieties as T-varieties, we define the relevant torus action on the non-trivial part of $\overline{X_w}$. 
Given $w \in S_n$, we can find the affine variety $Y_w$ such that $\overline{X_w} = Y_w \times \mathbb{C}^d$, for as large an integer $d$ as possible. First, we introduce certain combinatorial structures on the opposite Rothe diagram. 

\begin{definition}
    The \textit{dominant piece} of $w$, denoted by $\text{dom}(w)$, is the connected component of $(n,1)$ in the opposite Rothe diagram of $w$. We further define $\text{SW}(w)$ to be the set of $(i,j)$ in $D^{\circ}(w)$ that are south-west of an entry in $\text{Ess}(w)$. Finally, we let $L(w) \coloneqq \text{SW}(w) \setminus \text{dom}(w)$ and $L'(w) \coloneqq \SW(w)\setminus D^{\circ}(w)$. 
\end{definition}

\begin{example}
    \noindent For $w=45231$, the following diagrams depict $\SW(w)$, $L(w)$ and $L'(w)$, from left to right.

\begin{center}
   \begin{tikzpicture}[scale = 0.5]

  \draw[step=1.0,black,thin] (0,0) grid (5,5);

   \draw [line width = 0.0mm, draw=ForestGreen, fill=ForestGreen, draw opacity = 0.5, fill opacity=0.5]
       (0,0) -- (0,3) -- (3,3) -- (3,0)  --cycle;
\end{tikzpicture}
\qquad
\begin{tikzpicture}[scale = 0.5]

  \draw[step=1.0,black,thin] (0,0) grid (5,5);

   \draw [line width = 0.0mm, draw=ForestGreen, fill=ForestGreen, draw opacity = 0.5, fill opacity=0.5]
       (1,1) -- (0,1) -- (0,3) -- (3,3) -- (3,0)  -- (1,0) -- cycle;
\end{tikzpicture}
\qquad
\begin{tikzpicture}[scale = 0.5]

  \draw[step=1.0,black,thin] (0,0) grid (5,5);

   \draw [line width = 0.0mm, draw=ForestGreen, fill=ForestGreen, draw opacity = 0.5, fill opacity=0.5]
       (1,1) -- (0,1) -- (0,3) -- (2,3) -- (2,2) -- (3,2) -- (3,0)  -- (1,0) -- cycle;
\end{tikzpicture}
\end{center}
    \end{example}

\noindent Now, there are no conditions of $\overline{X_w}$ on a matrix $M$ outside of $\SW(w)$ and hence, $Y_w$ is isomorphic to the subvariety of $\overline{X_w}$ that is formed by setting $z_{ij}$ to $0$, for any $(i,j) \notin \text{SW}(w)$. The $B\times B$-action on $\overline{X_w}$ ``fixes'' this subvariety, and hence induces an action on $Y_w$. The restriction of the $B\times B$-action on $Y_w$ to $T\times T$ is known as the \textit{usual torus action}.

\noindent
Further, we note that $\mathbb{C}^{n^2-|\SW(w)|}$ is the projection of the matrix Schubert variety $\Xw$ onto the free boxes, associated to the entries of the $n\times n$ grid that are not in $\SW(w)$. Similarly, we can define $Y_w$ as the projection onto the boxes of $L(w)$. 
Note, that $(a,b) \in \dom(w)$ if and only if $r_w(a,b)=0$. Thus, $\Xw = Y_w \times \mathbb{C}^{n^2-|\SW(w)|}$ and, therefore,
$$\text{dim}(Y_w) = n^2 - |\Dc(w)| - (n^2 - |\SW(w)|) = |\SW(w)| - |\Dc(w)| = |L'(w)|.$$

\begin{example}
    Continuing with $w=45231$, we note that $\Xw = Y_w \times \mathbb{C}^{16}$ and $\text{dim}(Y_w) = 7$. The defining ideal of $Y_w$ is
    $$\left\langle \left|\begin{array}{ccc}
        z_{31} & z_{32} & z_{33} \\
        z_{41} & z_{42} & z_{43} \\
        0 & z_{52} & z_{53} \\
    \end{array}\right|  \right\rangle  \subset \mathbb{C}[z_{31}, z_{32}, z_{33}, z_{41}, z_{42}, z_{43}, z_{52}, z_{53}].$$
\end{example}

 \noindent In order to define the complexity of the $T \times T$ torus action, we consider the associated weight cone. We recall the definition, as stated in \cite{donten2021complexity}; we refer the reader to this paper for further details of relevant theory. The weight cone is given by 
$$\sigma_w = \mathrm{Cone}(e_i-f_j|(i,j)\in L(w)) \subseteq M(T \times T) \otimes_{\mathbb Z}{\mathbb R},$$
where $e_1, \ldots, e_n,f_1, \ldots, f_n$ denote the standard basis for $\mathbb Z^n \times \mathbb Z^n$.
Furthermore, we can calculate the generators and the dimension of this cone very easily via the edges of a bipartite graph. See \cite{portakal2021rigidity} for more details. The bipartite directed acyclic graph $\Gw$ is defined with $E(\Gw) = \{(a\rightarrow b^* | (a,b)\in L(w)\}$ and $V(\Gw) \subset [n] \cup[n]$ such that there are no isolated vertices. Furthermore, we immediately see that the weight cone of $Y_w$ is the edge cone of $\Gw$. For the sake of simplicity, we abuse notation and say that $\sigma_w$ is the edge cone of $Y_w$. In particular, we have that the $T\times T$-action on $Y_w$ is of complexity $d$ if and only if
$$\mathrm{dim}(\sigma_w) = \mathrm{dim}(Y_w)-d = |L'(w)|-d.$$

\begin{example}
    Again, consider the matrix Schubert variety $\Xw$ for $w=45231$. We can construct the graph $\Gw$, via the procedure described above. Then, the dimension of the edge cone $\sigma_w$ is $6-1=5$. As noted above, the dimension of $Y_w$ is $7$. Thus, $Y_{45231}$ is a $T\times T$-variety of complexity $2$.
\begin{center}
\begin{tikzpicture}
    \node[shape=circle,draw=black] (2) at (0,3) {2};
    \node[shape=circle,draw=black] (3) at (0,2) {3};
    \node[shape=circle,draw=black] (4) at (0,1) {4};

    \node[shape=rectangle,draw=black] (1b) at (1.5,3) {$1^*$};
    \node[shape=rectangle,draw=black] (2b) at (1.5,2) {$2^*$};
    \node[shape=rectangle,draw=black] (3b) at (1.5,1) {$3^*$};

    \draw[->] (2) -- (1b);
    \draw[->] (2) -- (2b);
    \draw[->] (2) -- (3b);
    \draw[->] (3) -- (1b);
    \draw[->] (3) -- (2b);
    \draw[->] (3) -- (3b);
    \draw[->] (4) -- (2b);
    \draw[->] (4) -- (3b);
\end{tikzpicture}
\end{center}
    
\end{example}

  \noindent In particular, if $Y_w$ has complexity $0$ under the usual torus action, then $Y_w$ is \emph{toric}. We can also characterise the $Y_w$ that are toric with respect to this action,  \cite{escobar2016toric}, via the following theorem:
\begin{theorem}[{\cite[Theorem 3.4]{escobar2016toric}}]\label{thm:toric_desc}
    $Y_w$ is a toric variety with respect to the usual torus action if and only if $L'(w)$ consists of disjoint hooks not sharing a row or a column.
\end{theorem}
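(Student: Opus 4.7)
The plan is to combine Lemma~\ref{lem:weightcondim} with the dimension formula $\dim Y_w = |L'(w)|$ to reformulate the toric condition as $v(G_w) - c(G_w) = |L'(w)|$. I would then work with the subgraph $G'_w \subseteq G_w$ whose edges correspond to the cells of $L'(w)$, and compare $v(G'_w) - c(G'_w)$ with $|L'(w)|$ before promoting the analysis to all of $G_w$.

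For the implication $(\Leftarrow)$, suppose $L'(w) = H_1 \sqcup \cdots \sqcup H_k$ is a disjoint union of hooks with pairwise disjoint row sets and column sets, where $H_\ell$ uses $r_\ell$ rows and $c_\ell$ columns, so that $|H_\ell| = r_\ell + c_\ell - 1$. Each hook $H_\ell$ contributes a ``double-star'' tree to $G'_w$ on $r_\ell + c_\ell$ vertices and $r_\ell + c_\ell - 1$ edges; the disjointness of rows/columns ensures these trees are vertex-disjoint, giving $v(G'_w) - c(G'_w) = \sum_\ell (r_\ell + c_\ell - 1) = |L'(w)|$. I would then show that the extra cells $L(w) \setminus L'(w) = (\Dc(w) \cap \SW(w)) \setminus \dom(w)$ correspond to edges whose endpoints already lie in the same tree component of $G'_w$, so that they only close cycles and leave $v(G_w) - c(G_w) = |L'(w)|$. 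The key combinatorial input is that each such extra cell lies in the SW-rectangle of an essential cell which is simultaneously a hook corner, so its row and its column both appear among those of that hook.

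For $(\Rightarrow)$, I would argue the contrapositive. Assuming $L'(w)$ is not a disjoint union of row/column-disjoint hooks, one uses the emptiness of $\Dc(w)$ in row $1$ and column $n$, together with the NE-corner structure of $\Ess(w)$, to classify the connected components of a realizable $L'(w)$. The key structural claim is that any non-hook component of a realizable $L'(w)$ must contain a $2 \times 2$ subpattern; this produces a $4$-cycle in $G'_w$, forcing $v(G'_w) - c(G'_w) < |L'(w)|$. The same essential-set analysis shows that edges of $L(w) \setminus L'(w)$ remain internal to components of $G'_w$ and cannot compensate for the lost rank, so that $v(G_w) - c(G_w) < |L'(w)|$ and $Y_w$ fails to be toric.

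The main obstacle is the structural classification in the $(\Rightarrow)$ direction: establishing that any non-hook connected component of a realizable $L'(w)$ necessarily contains a $2 \times 2$ block, thus ruling out potentially toric ``T''- or ``spider''-shaped obstructions whose bipartite graphs would otherwise already be trees. This requires a careful interplay between $\Dc(w)$, $\dom(w)$, and the essential SW-rectangles, and is closely tied to the ``internality'' property of the edges $L(w) \setminus L'(w)$ used in the other direction. Once these two combinatorial facts are in place, both implications follow from standard cycle-rank computations via Lemma~\ref{lem:weightcondim}.
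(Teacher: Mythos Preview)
This theorem is not proved in the paper; it is quoted verbatim from \cite[Theorem~3.4]{escobar2016toric} and used as a black box, so there is no in-paper argument to compare your proposal against. Your outline for the forward direction is sound and matches the way the paper implicitly uses the result later (see the discussion before Proposition~\ref{prop: one hook}): each hook contributes a tree to $G'_w$, and the cells of $L(w)\setminus L'(w)$ sit inside the staircase bounded by the arms of that hook, so they add no new vertices and only close cycles, keeping $v(G_w)-c(G_w)=|L'(w)|$.

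The gap you yourself flag in the contrapositive is genuine, and your stated plan does not yet close it. It is not enough to exhibit a $4$-cycle in $G'_w$; what you actually need is the inequality $\nu(G_w) > |L(w)\setminus L'(w)|$, and for that you must simultaneously control the extra edges from $L(w)\setminus L'(w)$ in the non-hook case. In particular, your claim that ``edges of $L(w)\setminus L'(w)$ remain internal to components of $G'_w$'' is what made the forward direction work, but in the non-hook case you have not argued that these edges cannot introduce new vertices (which would lower $\nu$ rather than raise it). A cleaner route is to show directly that if $L'(w)$ is not a disjoint union of row/column-disjoint hooks, then $L'(w)$ itself contains two cells $(i,j),(k,l)$ with $i<k$ and $j<l$; since $(k,j)\in\SW(w)$ automatically and one checks $(k,j)\notin\Dc(w)$ from the two given cells, the $2\times 2$ block (or a triangle through a $1$) is forced inside $L'(w)$, and then the ``internality'' of $L(w)\setminus L'(w)$ follows from the same essential-set rectangle argument you used in $(\Leftarrow)$. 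This is essentially the mechanism behind Lemma~\ref{contains cycle} in the KL setting.
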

 \noindent Here, a \textit{hook} with a corner $(i,j)$ consists of elements $(i',j')$, such that $j=j'$ and $i'< i$, or $i=i'$ and $j < j'$.

\subsection{(Toric) Matrix Schubert varieties under multiplication with simple reflections}\label{subsec: toric ms} 
\noindent In this subsection, we investigate how a toric matrix Schubert variety $\overline{X_w}$ changes when $w$ is right-multiplied by a simple reflection, $s_M$. Recall that $w\cdot s_M$ is the permutation with $w\cdot s_M (M) = w(M+1)$ and $w\cdot s_M (M+1) = w(M)$ and identical to $w$ everywhere else. In terms of the Bruhat order as defined in Section \ref{subsec: Bruhat order}, this operation changes $w$ by a cover relation: either increasing or decreasing the number of inversions by one. Combinatorially, this step is visualised by a local modification of the opposite Rothe diagram; the set of boxes (inversion positions) changes exactly where the swap occurs. 
In contrast to previous work, we examine the opposite Rothe diagram for the toric case in detail. 
We track how it changes under right-multiplication by a simple reflection and this allows us to observe explicitly how the generators of the edge cone associated to a toric matrix Schubert variety are affected. 

For this, we need to focus on the case where $L'(w)$ is formed of disjoint hooks, as specified by Theorem~\ref{thm:toric_desc} or equivalently where $w$ avoids the patterns 4312 and 3412, as stated in Theorem~\ref{thm: pattern avoidance}. 
Thus, we first take a closer look at the combinatorial structures of $w$ and $D^\circ(w)$ that allow 
$L'(w)$ to be formed of hooks.

\begin{definition}
    A \emph{staircase} of dimension $i \times j$, in an $n\times n$ lattice, is a Young diagram in French notation of height $i$ and width $j$. 
\end{definition}
\noindent Staircases play a key role for the structure of $\Dc(w)$ in the toric setting. We now detail this structure, and the logic that we use to arrive at it, below. 
First, let $L'(w)$ be formed of one hook, for $w \in S_n$. Let us say that the hook has height $i$ and width $j$. 
\begin{figure}[H]
\begin{minipage}{0.49\textwidth}
\centering
    \begin{tikzpicture}[scale = 0.8]
    \draw(0,0) rectangle (4,4);
    \draw[line width = 0.5mm, draw=magenta] (1.25,1.75)--(1.25,3)--(1,3)--(1,1.5)--(2.5,1.5)--(2.5,1.75)--cycle;
     \draw[decoration={brace,mirror,raise=5pt},decorate]
  (1,1.5) -- node[below=4pt] {\small $j$} (2.5,1.5);
  \draw[decoration={brace,raise=5pt},decorate]
  (1,1.5) -- node[left=4pt] {\small $i$} (1,3);
\end{tikzpicture}
\end{minipage}
\begin{minipage}{0.49\textwidth}
\centering
\begin{tikzpicture}[scale = 0.8]
    \draw(0,0) rectangle (4,4);
    \fill [gray, fill opacity = 0.3] (0,0) -- (3.5,0) -- (3.5,0.7) -- (3,0.7) -- (3,1) -- (2.5,1) -- (2.5,1.5) -- (1,1.5) -- (1,3.3) -- (0.5,3.3) -- (0.5,3.5) -- (0,3.5) -- cycle;
    \draw[line width = 0.5mm, draw=magenta!60] (1,1.5) -- (1,3) -- (1.5,3) -- (1.5,2.75) -- (2,2.75) -- (2,2)--(2.5,2) -- (2.5,1.5) -- (1,1.5) -- cycle;
    \draw[line width = 0.5mm, draw=magenta] (1.25,1.75)--(1.25,3)--(1,3)--(1,1.5)--(2.5,1.5)--(2.5,1.75)--cycle;
    \draw[->,line width = 0.5mm,magenta!60] (1.4,2.9)--(1.4,3.3) node[above] {\small $L(w)$};
    \draw[->,line width = 0.5mm,magenta] (1.15,1.6)--(1.15,1.2) node[below] {\small $L'(w)$};
\end{tikzpicture}
\end{minipage}
\caption{The figure on the left depicts the hook in an $n\times n$ lattice. The figure on the right illustrates the shapes of $L(w), L'(w)$ and the dominant piece, which is drawn by the grey area.}
\end{figure}

\begin{proposition}\label{prop: one hook}
    Let $L'(w)$ consist of one hook, for $w \in S_n$, of height $i$ and width $j$. The southwest of $L'(w)$ is contained in $\dom(w)$. We further note that the shape of the dominant piece of $L'(w)$ is a staircase, as all connected components of the Rothe diagram are staircases. 
\end{proposition}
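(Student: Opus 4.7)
The plan is to split the proposition into its two assertions and handle them separately: the containment by a direct combinatorial argument, and the staircase shape by invoking the structural property of connected components of $\Dc(w)$. To begin, let $(a,b)$ denote the corner of the hook $L'(w)$, with vertical arm $(a,b),(a-1,b),\dots,(a-i+1,b)$ in column $b$ and horizontal arm $(a,b),(a,b+1),\dots,(a,b+j-1)$ in row $a$. Since $(a,b)\in L'(w)\subseteq \SW(w)$, there is some $(p,q)\in \Ess(w)$ with $p\le a$ and $q\ge b$.

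For the first assertion, consider the rectangle $R:=\{(x,y):x\ge a,\ y\le b\}$. Every cell of $R$ lies southwest of $(p,q)$, so $R\subseteq \SW(w)$. Of the hook cells, only the corner $(a,b)$ belongs to $R$: every remaining hook cell has $x<a$ or $y>b$. Hence $R\setminus\{(a,b)\}\subseteq \SW(w)\setminus L'(w)\subseteq \Dc(w)$. This set is a rectangle with its northeast corner removed, so it is connected (the degenerate case $(a,b)=(n,1)$ is vacuous) and it contains $(n,1)$; therefore it lies inside the connected component $\dom(w)$, which proves the first claim.

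For the staircase shape, I would invoke the general fact that every connected component of $\Dc(w)$ is a Young diagram in French notation anchored at its southwest corner. The key observation is the ``southwest-closed for pairs'' property of $\Dc(w)$: whenever $(i_1,j_1),(i_2,j_2)\in \Dc(w)$ with $i_1<i_2$ and $j_1<j_2$, the defining inequalities give $w(j_1)<i_1\le i_2$ and $w^{-1}(i_2)>j_2>j_1$, so $(i_2,j_1)\in \Dc(w)$ automatically. Applied to $\dom(w)$, whose southwest corner is $(n,1)$, this pairwise closure produces the claimed staircase shape.

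The main obstacle I anticipate is bridging pairwise southwest-closure to the full Young-diagram shape of an individual component: while $(i_2,j_1)$ is forced into $\Dc(w)$, one must additionally show it lies in the same connected component as $(i_1,j_1)$ and $(i_2,j_2)$, which in principle requires controlling the intermediate cells along row $i_2$ or column $j_1$. A cleaner workaround uses the single-hook hypothesis directly: $\SW(w)$ is itself a staircase-shaped union of southwest rectangles anchored at $\Ess(w)$, and removing a single hook from this staircase decomposes it into pieces that are each Young-shaped, one of which is $\dom(w)$.
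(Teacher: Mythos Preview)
Your argument follows the same skeleton as the paper's proof---use $L'(w)\subseteq \SW(w)$ together with $L'(w)\cap \Dc(w)=\emptyset$ to force the southwest region into $\Dc(w)$, then observe it is connected and contains $(n,1)$---but there is a scope mismatch in the first assertion. Your rectangle $R=\{(x,y):x\ge a,\ y\le b\}$ is only the region southwest of the \emph{corner} of the hook, whereas ``the southwest of $L'(w)$'' means the union of the southwest regions of all hook cells. That larger L-shaped region also contains, e.g., cells $(x,y)$ with $x>a$ and $b<y\le b+j-1$ (directly south of the horizontal arm) and cells with $a-i+1\le x<a$ and $y<b$ (directly west of the vertical arm), none of which lie in $R$. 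The fix is immediate and uses exactly your reasoning: each such cell is southwest of some hook cell, hence southwest of an element of $\Ess(w)$, hence in $\SW(w)$; it is not a hook cell, hence lies in $\SW(w)\setminus L'(w)\subseteq \Dc(w)$; and a south-then-west path to $(n,1)$ stays inside this region, giving membership in $\dom(w)$.

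For the staircase claim, you are more careful than the paper, which simply asserts that all connected components of $\Dc(w)$ are staircases. Your pairwise southwest-closure observation is correct, and your stated concern---that $(i_2,j_1)\in \Dc(w)$ does not obviously land in the same component---is a legitimate subtlety the paper glosses over. Your proposed workaround via the shape of $\SW(w)$ under the single-hook hypothesis is a clean way to close this; alternatively, once you have shown the entire southwest region of the hook lies in $\Dc(w)$ and is connected to $(n,1)$, the staircase shape of $\dom(w)$ follows because $\dom(w)$ is then sandwiched between this L-shaped region and $\SW(w)$, both of which are already Young-shaped.
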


\begin{proof}
Recall that $L'(w) \subseteq L(w) \subseteq \SW(w)$, and $L'(w) \cap D^{\circ}(w)=\emptyset$. Hence, everything to the southwest of the hook must be in the opposite Rothe diagram, and is therefore one connected component. In other words, everything to the southwest of the hook must be in $\text{dom}(w)$.  
\end{proof}

\noindent We can then also specify what the structure of $\Dc(w)$ needs to look like when $L'(w)$ is formed of multiple hooks, with no shared row or column. 
Here, we denote the $k$ components of $L'(w)$ by $L_i'(w), i\in[k]$, in order from west to east and similarly denote the components of $L(w)$. Furthermore, the column of the hook in the $i$th connected component is written as $h_i$; the set of columns of $L_i\setminus L_i'(w)$ is given by $\alpha[i]$; and the set of columns before, between, and after the hooks are given by $\beta[i],i\in[k+1]$, as can be seen below. Again, we see that each $L_i(w)$, as well as each $\alpha[i]$ and $\beta[i]$ is a staircase.
When we wish to generically refer to any such staircase, we will refer to this staircase as $s$. The $i$th ``step" of this staircase is given by $s^i$, and the $j$th column of this step is $s^i_j$. Also, note that we call the easternmost step of a staircase $s^{\mathrm{last}}$ and
the easternmost column of a step $s^i_\mathrm{last}$. 

We can associate to each step $s^i$ its width, $\mathrm{width}(s^i)$. Also, to every step $s^i$ that is not the first step of it's staircase, we can associate
its height, which is the difference between the northernmost row of $s^i$ and the northernmost row of $s^{i-1}$.
With the structure of the Rothe diagram in the case that $L'(w)$ consists of disjoint hooks, we may extend the notion of height to first steps in the following way. Here, $(a,b)$ denotes the northwest-most element of $s$.
$$\mathrm{height}(s^i) = \begin{cases}
    a-1 & \textrm{if } s^i = \beta[1]^1 \textrm{ or } s^i = \alpha[1]^1 \textrm{ if } \beta[1] \textrm{ does not exist}\\
    0 & \textrm{if } s^i = \alpha[j]^1, \,\,\, j \neq 1 \textrm{ and } \beta[j-1] \textrm{ does not exist}\\
    \# \{ (c,b-2) \in \Dc(w) \mid c>a \} & \textrm{if } s^i = \alpha[j]^1 \textrm{ and } \beta[j-1]
   \textrm{ exists} \\
   \end{cases}
$$

\noindent We say that the $1$ in column $s^i_k$ is \emph{on the step}, if it lies exactly $k$ positions above it. This is based on the fact that if there is space, the $1$ in a column $s^i_{k+1}$ will always lie one position higher than the $1$ in column $s^i_k$. However, there may not always be space: 
if $\mathrm{width}(s^i) > \mathrm{height}(s^i)$, the $1$s in columns $s^i_k$ with $k > \mathrm{height}(s^i)$ will not lie on the step, but further north of it.

\begin{figure}[H]
\begin{minipage}{0.49\textwidth}
    \centering
    \hspace{1.5cm}
    \begin{tikzpicture}[scale=0.50]
    \draw(0,0) rectangle (8,8);
    
    \draw[line width = 0.5mm, draw=magenta!60] (1,5.5) -- (1,7) -- (1.5,7) -- (1.5,6.75) -- (2,6.75) -- (2,6) -- (2.5,6) -- (2.5,5.5) -- (1,5.5);
    \draw[line width = 0.5mm, draw=magenta] (1.25,5.75)--(1.25,7) -- (1,7)--(1,5.5) -- (2.5,5.5)--(2.5,5.75) -- cycle;
    \draw[line width = 0.5mm, draw=magenta!60] (5.5,4)--(5.5,4.25)--(4.5,4.25)--(4.5,4.5)--(4,4.5)--(4,4.75)--(3.75,4.75);
    \draw[line width = 0.5mm, draw=magenta] (3.5,3.75)--(3.5,4.75)--(3.75,4.75)--(3.75,4)--(5.5,4)--(5.5,3.75)--cycle;
  
    \draw[line width = 0.5mm, draw=magenta!60] (7,2)--(7,2.25)--(6.75,2.25)--(6.75,2.5)--(6.25,2.5);
    \draw[line width = 0.5mm, draw=magenta] (6,1.75)--(6,2.5)--(6,2.5)--(6.25,2.5)--(6.25,2)--(7,2)--(7,1.75)--cycle;

    \fill [gray, fill opacity = 0.3] (0,7.5) -- (0.5,7.5) -- (0.75,7.5) -- (0.75,7) -- (1,7) -- (1,5.5) -- (2.5,5.5) -- (2.5,5) -- (3.25,5) -- (3.25,4.75) -- (3.5,4.75) -- (3.5,3.75) -- (5.5,3.75) -- (5.5,3) -- (5.75,3) -- (5.75,2.5) -- (6,2.5) -- (6,1.75) -- (7,1.75) -- (7,0) -- (0,0) -- cycle;

    \node at (7.5,1.75) {$\ddots$};
   
    \draw[thick,decoration={brace,raise=2pt},decorate] (0,8) -- node[above=4pt] {$\beta[1]$} (1,8);
    \draw[thick,decoration={brace,raise=2pt},decorate] (2.5,8) -- node[above=4pt] {$\beta[2]$} (3.5,8);
    \draw[thick,decoration={brace,raise=2pt},decorate] (5.5,8) -- node[above=4pt] {$\beta[3]$} (6,8);

    \draw[thick,decoration={brace,mirror,raise=2pt},decorate] (1.5,0) -- node[below=4pt] {$\alpha[1]$} (2.5,0);
    \draw[thick,decoration={brace,mirror,raise=2pt},decorate] (3.75,0) -- node[below=4pt] {$\alpha[2]$} (5.5,0);
    \draw[thick,decoration={brace,mirror,raise=2pt},decorate] (6.25,0) -- node[below=4pt] {$\alpha[3]$} (7,0);
    \end{tikzpicture}
\end{minipage}
\begin{minipage}{0.49\textwidth}
    \centering
    \hspace{-3cm}
    \begin{tikzpicture}[scale = 0.5]
  
  \draw [line width = 0.5mm]
       (0,3) -- (0,1) -- (1,1) -- (1,-1) -- (4,-1);
  \node[magenta] at (3.5,2.5) {\small $1$};
  \node[blue] at (0.5,1.5) {\small $1$};
  \node[blue] at (1.5,-0.5) {\small $1$};
  \node[blue] at (2.5,0.5) {\small $1$};
\end{tikzpicture}
\end{minipage}
\caption{The image on the left depicts the form of the opposite Rothe diagram in the case of multiple disjoint hooks. 
The grey area is the dominant piece. 
In the diagram on the right, we have two steps $s^i$ and $s^{i+1}$. The ones in blue lie on the step, since their position on the step is lower than the height of the step. The one in pink lies north of the step, since that is not the case for it.}
\end{figure}

\begin{proposition}
    Let $L'(w)$ consist of $k$ hooks, for $w \in S_n$. The southwest of each connected component of $L'(w)$ is contained in $\dom(w)$. Moreover, the shape of the columns of the dominant piece to the west of $L'_1(w)$; in between $L'_i(w)$ and $L'_{i+1}(w)$, for $i\in[k-1]$; and to the east of $L'_k(w)$, is a staircase. 
\end{proposition}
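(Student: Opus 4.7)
The argument generalises Proposition~\ref{prop: one hook}. Label the $k$ disjoint hooks $L'_1(w),\dots,L'_k(w)$ so that their corner columns satisfy $b_1<b_2<\dots<b_k$, where $(a_i,b_i)$ denotes the corner of $L'_i(w)$. I would first verify that in the toric setting the corners necessarily form a NW-to-SE chain, i.e.\ also $a_1<a_2<\dots<a_k$: otherwise, if $a_i>a_{i+1}$ while $b_i<b_{i+1}$, then the horizontal arm of $L'_i(w)$ in row $a_i$ and the vertical arm of $L'_{i+1}(w)$ in column $b_{i+1}$ would force these two hooks either to share a row or column or to violate the disjoint-hooks hypothesis of Theorem~\ref{thm:toric_desc}.

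Since each corner $(a_i,b_i)$ lies in $L'(w)\subseteq\SW(w)$, it is SW of some essential set element, so the rectangle $R_i=[a_i,n]\times[1,b_i]$ is contained in $\SW(w)$. From $L'(w)=\SW(w)\setminus\Dc(w)$, every box of $R_i$ not in any hook must lie in $\Dc(w)$. To see these $\Dc(w)$-pieces all sit in $\dom(w)$, I would exploit consecutive overlaps: $R_i\cap R_{i+1}=[a_{i+1},n]\times[1,b_i]$ meets no hook. Hooks $L'_j$ with $j\le i$ have all boxes in rows $\le a_j\le a_i<a_{i+1}$, missing $[a_{i+1},n]$; hooks $L'_j$ with $j\ge i+1$ have their vertical arm in column $b_j\ge b_{i+1}>b_i$ and their horizontal arm in columns $>b_j>b_i$, both missing $[1,b_i]$. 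Thus $R_i\cap R_{i+1}\subseteq\Dc(w)$, chaining the $\Dc(w)$-parts of $R_1,\dots,R_k$ into one connected component; this component contains $(n,1)$ and therefore equals $\dom(w)$.

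For the staircase claim I would invoke the fact used in the single-hook case, that in the toric setting every connected component of the opposite Rothe diagram has Young-diagram shape, so that $\dom(w)$ itself is a staircase. The columns of $\beta[j]$ form a contiguous range (west of $b_1$, strictly between $b_{j-1}$ and $b_j$ for $1<j\le k$, or east of $b_k$), and restricting a Young-diagram shape to a contiguous column range yields a Young-diagram. Hence each of $\beta[1],\dots,\beta[k+1]$ is a staircase. The subtlest ingredient in the whole plan is establishing the NW-to-SE chain ordering of the corners, since "no two hooks share a row or a column" does not immediately forbid an anti-chain configuration; resolving this requires a careful local analysis around each pair $(L'_i(w),L'_{i+1}(w))$, or an appeal to the pattern-avoidance characterisation of toric matrix Schubert varieties, which rules out such anti-chains in $L'(w)$. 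The rest of the argument is a routine extension of the single-hook logic.
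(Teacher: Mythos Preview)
Your proposal is correct and follows the same strategy as the paper, which simply records the result as ``an analogue of the proof of Proposition~\ref{prop: one hook}'' without further detail. You supply considerably more: the explicit chaining of the rectangles $R_i$ via their overlaps to show that everything south-west of the hooks lies in a single connected piece of $\Dc(w)$ containing $(n,1)$, and you correctly flag the NW--to--SE ordering of the corners as the one step that is not automatic from the ``no shared row or column'' hypothesis.

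Two small remarks. First, your sketched direct argument for the chain ordering (``the horizontal arm of $L'_i(w)$ and the vertical arm of $L'_{i+1}(w)$ would force a shared row or column'') does not quite close as stated: those arms can miss each other entirely while the corners sit in SW--NE position. The pattern-avoidance route you mention is the cleaner way to finish this, or alternatively one can argue that if $(a_i,b_i)$ were strictly SW of $(a_{i+1},b_{i+1})$ then $(a_i,b_{i+1})\in\SW(w)\setminus\Dc(w)=L'(w)$ only if some hook occupies it, and any such hook would then share a row with $L'_i$ or a column with $L'_{i+1}$. Second, the staircase shape of $\dom(w)$ does not require the toric hypothesis: $\dom(w)=\{(a,b):r_w(a,b)=0\}$ is always a Young diagram in French notation, since the rank-zero condition is south-west closed. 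This is what the one-hook proof is using when it says ``all connected components of the Rothe diagram are staircases,'' and it suffices for your restriction-to-a-column-range argument.
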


\begin{proof}
    This follows as an analogue of the proof of Proposition \ref{prop: one hook}.
\end{proof}

 \noindent Another theorem that will prove crucial in our study of toric matrix Schubert varieties was given in \cite{stelzer2025matrix} and is given as follows. Note that in the paper, Stelzer uses a different convention of the Rothe diagram to ours. Thus, we quote the theorem after rewriting the patterns in accordance with our convention. 

\begin{theorem}[{\cite[Theorem 1.6]{stelzer2025matrix}}]\label{thm: pattern avoidance}
    The matrix Schubert ideal $\Xw$ is binomial if and only if $w$ avoids the patterns $4312$ and $3412$. Furthermore, the only toric matrix Schubert varieties are those which are toric under the usual torus action, as in Theorem~\ref{thm:toric_desc}.
\end{theorem}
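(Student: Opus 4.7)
My plan is to establish the binomiality equivalence first and then deduce the toric classification as a corollary.

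For the $(\Leftarrow)$ direction of the first equivalence, I would combine pattern avoidance with the disjoint-hook structure of Theorem~\ref{thm:toric_desc}. When $w$ avoids both $4312$ and $3412$, the opposite Rothe diagram $\Dc(w)$ has essential corners whose southwest submatrices have support confined to hook-shapes together with a dominant piece. Concretely, for each $(a,b) \in \Ess(w)$, variables outside the hook at that corner are forced to zero by the rank-$0$ conditions coming from the dominant piece. Laplace expansion of the remaining Fulton minor along the hook's spine then reduces it to a single $2\times 2$ minor, which is a binomial. An induction on $|\Ess(w)|$, peeling the essential corners in an order consistent with the partial order on $\Dc(w)$, formalises this.

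For the $(\Rightarrow)$ direction, I would argue by contrapositive: if $w$ contains an occurrence of $4312$ or $3412$, then some essential corner forces a genuine $3\times 3$ minor into the ideal that cannot be reduced to binomials. To control the ambiguity in the generating set, I would analyse an initial ideal with respect to an antidiagonal term order (for which matrix Schubert ideals are known to have well-behaved Gröbner bases, with squarefree initial ideals). A binomial ideal has a binomial initial ideal, and enumerating the leading squarefree monomials that arise from the obstructing $3\times 3$ minor would produce a trinomial relation in the initial ideal, a contradiction. The main obstacle is precisely this last step: ruling out that the ambient ideal admits any binomial generating set requires a genuine invariant-theoretic argument, and this is where the pattern-avoidance dichotomy does the real work.

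The second claim is then immediate. A toric ideal is by definition a prime binomial ideal, so if $\overline{X_w}$ is toric under any torus action then its defining ideal is binomial, and by the first part $w$ avoids $4312$ and $3412$. This forces $L'(w)$ to consist of disjoint hooks not sharing a row or column, and Theorem~\ref{thm:toric_desc} identifies this exactly with toricity of $Y_w$ under the usual torus action, which via $\overline{X_w} = Y_w \times \mathbb{C}^d$ passes to $\overline{X_w}$ itself.
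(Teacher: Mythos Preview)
The paper does not prove this statement at all: Theorem~\ref{thm: pattern avoidance} is quoted verbatim from \cite{stelzer2025matrix} and used as a black box, so there is no in-paper argument to compare your proposal against.

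On the content of your sketch itself, two remarks. First, your $(\Leftarrow)$ direction silently assumes the combinatorial equivalence ``$w$ avoids $4312$ and $3412$ $\iff$ $L'(w)$ is a disjoint union of hooks,'' which is precisely the bridge between Theorem~\ref{thm:toric_desc} and Theorem~\ref{thm: pattern avoidance}; you would need to prove it separately before your Laplace-expansion reduction applies. Second, your $(\Rightarrow)$ direction contains a genuine gap. The claim ``a binomial ideal has a binomial initial ideal, and enumerating the leading squarefree monomials \ldots\ would produce a trinomial relation in the initial ideal'' does not make sense as written: the initial ideal with respect to any term order is a \emph{monomial} ideal, so no trinomial obstruction can be read off there. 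What is actually required is an argument that the presence of a $4312$ or $3412$ pattern forces a non-binomial element among the \emph{minimal} generators of $I_w$ (or, equivalently, a non-toric local structure at some point of $\overline{X_w}$); you correctly flag this as ``the main obstacle'' but do not supply a mechanism. Your derivation of the second assertion from the first is fine, modulo the same unproved pattern-avoidance/hook equivalence.
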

\noindent Thus, we may equivalently use this criterion to determine when a matrix Schubert variety is toric, or not. Using both of these criteria in its proof, we can then state the following theorem. 

\begin{theorem}\label{thm: all toric mS}
     Let $w\in S_n$ be a permutation such that $Y_w$ is toric. 

     For $M = \alpha[j]^i_k$, the variety ${Y_{w\cdot s_M}}$ is not toric if and only if one of the following holds.

     \begin{enumerate}
         \item $i=1$, $k=1$ and $ \mathrm{width}(\alpha[j]^1) >1$, $\mathrm{height}(\alpha[j]^\mathrm{1}) =0$;
         \item 
         $k = \mathrm{last}$, $i \neq \mathrm{last}$ and $\mathrm{height}(\mathrm{\alpha[j]^{i+1}})\geq2$;
         \item $i,k = \mathrm{last}$ and 
         \begin{enumerate}
            \item $\mathrm{height}(\beta[j+1]^1)=0$ and $\mathrm{width}(\alpha[j]^{\mathrm{last}}) > \min \{ 1, \mathrm{height}(\alpha[j]^{\mathrm{last}}) \}$, or
            \item $\mathrm{height}(\beta[j+1]^1)>0$, or
            \item $\beta[j+1]$ does not exist;
         \end{enumerate}
         \item $1 <k \neq \mathrm{last}$.
     \end{enumerate}

     For $M = \beta[j]^i_k$, the variety ${Y_{w\cdot s_M}}$ is not toric if and only if the following holds.

     \begin{enumerate}
         \setcounter{enumi}{4}
         
         \item $\min \{ \mathrm{height}(\beta[j]^i), 2 \} <k \neq \mathrm{last}$.
         
     \end{enumerate}

     The variety is never not toric if $M = h_j$.

\begin{proof}
    
    We consider all possibilities for $M$, and the structure of $w$ that render $w\cdot s_M$ non-toric. That is, we want to find all combinations of $w$ and $M$ such that $w\cdot s_M$ contains one of $3412$ or $4312$. In other words, we need $w\cdot s_M$ to contain the sub-sequence, in one-line notation, of the form $a\ldots b\ldots c \ldots d$, such that $a,b>d$ and $d>c$. Then, note, that $a$ and $b$ have the same roles in the permutation, and thus swapping them cannot add or remove the patterns that we are interested in.
    We have two cases, relating to ascents and descents of $w$.\\

    \noindent \textbf{1.} $w$ has an ascent at $M$. That is, $w(M) < w(M+1)$. Thus, $w\cdot s_M(M) > w\cdot s_M(M+1)$. 
    
    If the new permutation contains either of the patterns $3412$ or $4312$, then, $w\cdot s_M(M)$ must take the role of $b$ and $w\cdot s_M(M+1)$ takes that of $c$. Thus, $w$ contains $a\ldots bc \ldots d$, with $b,a>d>c$. That is, it contains the pattern $3142$, or $4132$ where the $1$ and $4$, or respectively $1$ and $3$, are in neighbouring columns. 
    
    \noindent \textbf{Case I}: $w$ contains the pattern $3142$. Thus, $w$ contains the subword $a\ldots bc \ldots d$, such that $b<d<a<c$. The $14$ patterns in neighbouring columns can only arise in the following ways:

    \begin{itemize}
    \item The $b$ is associated to the last column of some step $s^i$ and the $c$ is associated to the first column of the next step. Then all rows in between the top of $s^i$ and the row associated to $b$ have $1$s in them that lie to the west of $b$. Hence, say we find the $314$ pattern in $abc$, then the $a$ is one of these $1$s
    and the $d$ to the east of it cannot lie between $a$ and $b$. Therefore, the pattern cannot occur in this case. 
    
    \item The $b$ is associated to the last column of some step $\beta[j]$ and the $c$ is associated to the column of a hook corner. 
    For the same reason as above, $a$ and $d$ cannot be found in this case.

    \item The $b$ is associated to the last column of some hook staircase $\alpha[j]$ and the $c$ is associated to the first column of $\beta[j+1]$ or $h_{j+1}$. This is only possible if $\beta[j+1]^1$ has positive height or if the hooks lie right next to each other. We can choose as $a$ the hook corner $h_j$ and as $d$ the $1$ that lies in the row exactly one above the row of the hook. Hence, $M= \alpha[j]_{\textrm{last}}^{\textrm{last}}$.
    
    \end{itemize}
   \noindent \textbf{Case II}: $w$ contains the pattern $4132$. Thus, $w$ contains the subword $a\ldots bc \ldots d$, such that $b<d<c<a.$ The only way for the $13$ pattern to occur as $bc$, in neighbouring columns, is for the $b$ to correspond to the last column of a step $s^i$, and the $c$ to the first column of the next step or the corner of a hook. However, the $43$ pattern cannot occur as $ac$ if the $a$-column is simply a column in a step that comes to the left of $s^{i+1}$ or the hook. We specifically need the $c$ to be in the first column of $s^{i+1}$ and the $a$ to be in a column $h_j$; that is, in the column of a hook. Then, the $b$ corresponds to $\mathrm{\alpha[j]}^i_{\mathrm{last}}$ and $c$ to $\mathrm{\alpha[j]}^{i+1}_1$. The $d$ must be associated to a column that is somewhere on step $\mathrm{\alpha[j]}^{i+1}$; with the $1$ in this corresponding column to be in between the $1$s in columns $\mathrm{\alpha[j]}^i_{\mathrm{last}}$ and $\mathrm{\alpha[j]}^{i+1}_1$. This is only guaranteed if $\mathrm{height}(\mathrm{\alpha[j]^{i+1}})\geq 2$. Hence, $M=\mathrm{\alpha[j]}^i_{\mathrm{last}}$.\\

    \noindent \textbf{2.} $w$ has an descent at $M$. That is, $w(M) > w(M+1)$. In other words, $w\cdot s_M(M) < w\cdot s_M(M+1)$. Then, if $w\cdot s_M$ contains either $3412$ or $4312$, we see that it must contain, as a subword, $a\ldots b\ldots cd$, such that $c<d<a,b$; the columns representing $c$ and $d$ are adjacent to one another. 

    \noindent \textbf{Case I}: $w$ contains the pattern $3421$. This occurs in the case that $w\cdot s_M$ contains $3412$. Then, $w$ contains a subsequence $a\ldots b\ldots cd$, with $d<c<a<b$. This pattern is found in the following ways:
    \begin{itemize}
    \item
    The columns corresponding to $c$ and $d$ are on the same step, say step $s^i$. For now assume that we can also find the 
     columns corresponding to $a$ and $b$, such that they are in the pattern $34$.
    Then, we just need to ensure that the $1$s in the $c$ and $d$-columns are to the north of that in the $a$-column. This happens if and only if these two $1$s are not on the step $s^i$, but north of it. This, however is guaranteed if and only if $M=s_k^i$ where $\mathrm{height}(s^i) < k < \mathrm{width}(s^i)$. As long as $M \neq \beta[j]^1$, one can see that $a$ and $b$ as we need them actually exist: If $M = \alpha[j]^1_k$, then we may choose $b$ in the hook column and $a$ in the first column of the previous step. Note that $\alpha[j]^1$ cannot be the westernmost step, since otherwise the height of $\alpha[j]^1$ is always greater or equal than its width. If $M = s^i_k$ is any other step, then we may choose $b$ in column $s^i_1$ and $a$ in the first column of the previous step.
    If $M=s^i_k =\beta[j]^1$, we show in another subcase below that the condition $\mathrm{height}(s^i) < k < \mathrm{width}(s^i)$ is sufficient.
    
    \item
    The $c$ is in the easternmost column $\alpha[j]^{\text{last}}_{\text{last}}$ of a staircase and the $d$ on the first step of the dominant piece after the hook, but not on the step. This occurs only when $\mathrm{height}(\beta[j+1]^1)=0.$
    In order to find $a$ and $b$, we also require the $c$ to lie above the step. This occurs when $\mathrm{width}(\alpha[j]^{\text{last}}) > \mathrm{height}(\alpha[j]^{\text{last}})$. In this case, $M = \alpha[j]^{\text{last}}_{\text{last}}$.

    \end{itemize} 

    \noindent \textbf{Case II}: $w$ contains the pattern $4321$. This occurs in the case that $w\cdot s_M$ contains $4312$. Then, $w$ contains a subsequence $a\ldots b\ldots cd$, with $d<c<b<a$. This pattern is found in the following ways:
    \begin{itemize}
        \item All four columns belong to the same step $s^i$ and then we have that $M=s^i_k, k\geq 3$, and $s^i_k$ is not the last column of $s^i$. 
        \item The columns corresponding to $c$ and the $d$ are columns in $s^i$ but not on the step, but are north of it. In a previous subcase we have shown that this is the case if $M= s^i_k$ and $\mathrm{height}(s^i) < k < \mathrm{width}(s^i)$ and that as long as $M \neq \beta[j]^1$, this condition alone already guarantees non-toricness. Thus, now consider the leftover case $M = \beta[j]^1$. Then $\mathrm{width}j<(\beta[j]^1)$ and $\mathrm{height}(\beta[j]^1)=0$. To obtain the $432$ pattern, we can choose $a$ in the hook column and $b$ in column $\alpha[j-1]^\mathrm{last}_1$. Hence, overall, we obtain the case $M= s^i_k$ and $\mathrm{height}(s^i) < k < \mathrm{width}(s^i)$.
        \item The column corresponding to the $a$ is the hook column $h_j$; the $b$ is associated to a column of a step in $\alpha[j]$, namely $\alpha[j]^i_k$, for some $i,k$ and $c,d$ correspond to columns that are either north of the steps in the hook structure; or are given by $\alpha[j]^i_\ell, \alpha[j]^i_{\ell+1}$, for $\ell>k$. The former case is covered by the previous bullet point. Thus, another option is given by $M=\alpha[j]^i_\ell$, such that $\text{width}(\alpha[j]^i) > l >1$. If $\alpha[j]^i$ is the easternmost step of $\alpha[j]$, then we can also have that $M=\alpha[j]^{\text{last}}_{\text{last}}$ and the variety is non-toric if $\mathrm{height}(\beta[j+1]^1)=0$ and the width of $\alpha[j]^\text{last}$ is $>1$. \qedhere
    \end{itemize}
\end{proof}
\end{theorem}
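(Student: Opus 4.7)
The plan is to combine the detailed staircase description of $\Dc(w)$ developed earlier in this subsection with the pattern-avoidance criterion of Theorem~\ref{thm: pattern avoidance}: $Y_w$ is toric if and only if $w$ avoids both $3412$ and $4312$. Since $Y_w$ is toric by hypothesis, $w$ contains neither forbidden pattern, so $Y_{w\cdot s_M}$ fails to be toric precisely when right-multiplication by $s_M$ introduces one. Because $w$ and $w\cdot s_M$ differ only in positions $M$ and $M+1$, any newly created occurrence of a forbidden pattern must use at least one of these two positions; since they are adjacent columns, when both are used they occupy two adjacent slots of the pattern, and the question reduces to checking which supporting entries exist in the rest of $w$.

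The first step is to split according to whether $M$ is an ascent or a descent of $w$. If $w(M)<w(M+1)$, then columns $M,M+1$ form a descent after the swap, so they must serve as the ``$14$'' piece of a $3142$ enlarged into $3412$, or the ``$13$'' piece of a $4132$ enlarged into $4312$. If $w(M)>w(M+1)$, the swap produces an ascent at $M$, and the pair must supply the trailing ``$12$'' of a $3421 \to 3412$ or a $4321 \to 4312$. Either way, the problem reduces to finding one or two auxiliary entries of $w$ in prescribed relative positions.

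I would then translate these auxiliary-entry searches into combinatorial conditions on the staircase in which $M$ sits, using the following facts. By Proposition~\ref{prop: one hook} and its multi-hook generalisation, the southwest of every hook lies in $\dom(w)$, so the positions of the $1$s in the columns of each step $s^i$ are completely determined by the northwest-most box of the containing staircase and by $\mathrm{height}(s^i)$: the $1$ in column $s^i_k$ sits on the step when $k\le\mathrm{height}(s^i)$ and strictly above it otherwise. Columns to the west of a staircase contribute $1$s whose rows decrease moving east, while the hook corners $h_j$ supply the globally largest available values. With these rules in hand, each subcase $M=\alpha[j]^i_k$, $M=\beta[j]^i_k$, or $M=h_j$ reduces to identifying which step $M$ lies on, whether the $1$ in column $M$ lies on or above that step, and whether a hook corner or suitable $1$ is available to the east (for descent configurations) or to the west (for ascent configurations). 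Comparing the resulting conditions against the list in the statement produces the five enumerated non-toric configurations and shows that no choice of $M=h_j$ can create a forbidden pattern.

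The main obstacle will be the bookkeeping: for each $M$ one must verify both sufficiency (a forbidden pattern is actually created) and necessity (no configuration of $1$s inadvertently produces one outside the list). Particular care is required at several boundaries: the first column of the westernmost step; two adjacent hooks so that $\beta[j+1]$ is absent or has $\mathrm{height}(\beta[j+1]^1)=0$; the last column $\alpha[j]^{\mathrm{last}}_{\mathrm{last}}$ of a hook staircase, where a $1$ just east of the hook can unexpectedly complete a $3421$; and the subtle subcase $M=\beta[j]^1_k$, where the naive condition $\mathrm{height}(\beta[j]^1)<k<\mathrm{width}(\beta[j]^1)$ only suffices because the hook corner $h_j$ to the east supplies the ``$a$'' of a $4321$. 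I would dispatch these explicitly and confirm that the four conditions listed for $M=\alpha[j]^i_k$, together with the single condition for $M=\beta[j]^i_k$ and the vacuous case $M=h_j$, exhaust all ways of creating a $3412$ or $4312$ pattern.
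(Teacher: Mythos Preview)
Your plan is correct and mirrors the paper's proof almost exactly: the same reduction to Theorem~\ref{thm: pattern avoidance}, the same ascent/descent split yielding the four precursor patterns $3142$, $4132$, $3421$, $4321$ in $w$ with the adjacent pair at positions $M,M+1$, and the same translation into staircase data via the ``on the step / above the step'' dichotomy for the $1$s. One small slip: in the boundary subcase $M=\beta[j]^1_k$, the hook corner supplying the ``$a$'' of the $4321$ lies to the \emph{west} (it is $h_{j-1}$, with $b$ taken in $\alpha[j-1]^{\mathrm{last}}$), not $h_j$ to the east.
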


From the above theorem, one can directly deduce for which instances of $w$ and $M$ the variety $Y_{w \cdot s_M}$ is toric. We list them in the corollary below. 

\begin{corollary}\label{cor: toric cases}
Let $w\in S_n$ be a permutation such that $Y_w$ is a toric variety. 

For $M= \alpha[j]^i_k$, the variety $Y_{w\cdot s_M}$ is toric if and only if one of the following holds.
\begin{enumerate}
    \item $i,k=1$ and $\mathrm{width}(\alpha[j]^1)>1$,
        $\mathrm{height}(\alpha[j]^1)>0$; 
    \item $1 = k \neq \mathrm{last}$, $i \neq 1$;
    \item $k = \mathrm{last}, i \neq \mathrm{last}$ and $\mathrm{height}(\alpha[j]^{i+1})=1$;
    \item $i,k= \mathrm{last}$ and
    {$\mathrm{width}(\alpha[j]^{\mathrm{last}}) \leq \min \{1, \mathrm{height}(\alpha[j]^{\mathrm{last}}) \}$}
    , $\mathrm{height}(\beta[j+1]^1)=0$.
\end{enumerate}
For $M= \beta[j]^i_k$, the variety $Y_{w\cdot s_M}$ is toric if and only if one of the following holds.
\begin{enumerate}
    \setcounter{enumi}{4}
    \item $k=1$ and $\mathrm{height}(\beta[j]^i)>0$;
    \item $k=2$ and $\mathrm{height}(\beta[j]^i)>1$;
    \item $k = \mathrm{last}$.
\end{enumerate}

The variety $Y_{w\cdot s_M}$ is also toric if

\begin{enumerate}
    \setcounter{enumi}{7}
    \item $M=h_j$.
\end{enumerate}

\end{corollary}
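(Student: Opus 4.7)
The plan is to derive the statement as the direct logical complement of Theorem~\ref{thm: all toric mS}, organising the enumeration by the position of the reflection index $M$ in the step structure of the dominant piece. Since Theorem~\ref{thm: all toric mS} gives a complete characterisation of when $Y_{w\cdot s_M}$ is \emph{not} toric under the standing assumption that $Y_w$ is toric, the corollary follows case-by-case by negating the listed conditions; no new geometric input is required.

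I would first handle the trivial case $M = h_j$: Theorem~\ref{thm: all toric mS} explicitly states that $Y_{w\cdot s_M}$ is never non-toric in this situation, giving item (8) immediately. Next, for $M = \beta[j]^i_k$ the only relevant non-toric condition reads $\min\{\mathrm{height}(\beta[j]^i), 2\} < k \neq \mathrm{last}$. Splitting on $k$: if $k = \mathrm{last}$ the condition fails automatically, giving item (7); if $k = 1$ with $k \neq \mathrm{last}$ the inequality forces $\mathrm{height}(\beta[j]^i) = 0$, so toricity is equivalent to $\mathrm{height}(\beta[j]^i) > 0$, giving item (5); if $k = 2$ with $k \neq \mathrm{last}$ the inequality requires $\mathrm{height}(\beta[j]^i) \leq 1$, so toricity is equivalent to $\mathrm{height}(\beta[j]^i) > 1$, giving item (6); and for $k \geq 3$ with $k \neq \mathrm{last}$ the condition always holds, so no toric instance arises.

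The main case $M = \alpha[j]^i_k$ requires bookkeeping of the four overlapping non-toric subconditions (1)--(4) of Theorem~\ref{thm: all toric mS}. I would stratify $k$ into three mutually exclusive cases: (i) $k = 1 \neq \mathrm{last}$, (ii) $1 < k \neq \mathrm{last}$, and (iii) $k = \mathrm{last}$. Stratum (ii) is entirely non-toric by condition (4), contributing nothing. In stratum (i) we automatically have $\mathrm{width}(\alpha[j]^1) > 1$; conditions (2) and (3) require $k = \mathrm{last}$ and (4) requires $k > 1$, so only condition (1) can apply, and only when $i = 1$: its negation (given the automatic $\mathrm{width} > 1$) simplifies to $\mathrm{height}(\alpha[j]^1) > 0$, giving item (1); for $i \neq 1$ within stratum (i) no subcondition applies, yielding item (2) unconditionally. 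In stratum (iii) I would further split on whether $i = \mathrm{last}$: if $i \neq \mathrm{last}$ then only condition (2) can apply, whose negation is $\mathrm{height}(\alpha[j]^{i+1}) = 1$ (the height is always at least $1$ by the definition of a step), giving item (3); if $i = \mathrm{last}$ then only condition (3) with its three subcases (a)--(c) can apply, and the simultaneous negation of all three collapses to $\beta[j+1]$ existing with $\mathrm{height}(\beta[j+1]^1) = 0$ and $\mathrm{width}(\alpha[j]^{\mathrm{last}}) \leq \min\{1, \mathrm{height}(\alpha[j]^{\mathrm{last}})\}$, which is item (4).

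The main obstacle is purely organisational: ensuring the stratification of $(i,k)$ is exhaustive and mutually exclusive, tracking which of the four non-toric subconditions are vacuous in each stratum, and correctly negating the three-way disjunction in Theorem~\ref{thm: all toric mS}(3) without losing the implicit hypothesis that $\beta[j+1]$ exists whenever we assert $\mathrm{height}(\beta[j+1]^1) = 0$. Once the stratification is laid out cleanly, each toric case of the corollary is identified with a unique surviving stratum or sub-stratum, and the verification is a routine unpacking of the definitions of $\mathrm{width}$ and $\mathrm{height}$ for steps.
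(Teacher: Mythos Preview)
Your proposal is correct and matches the paper's approach exactly: the paper provides no proof for this corollary beyond the sentence ``From the above theorem, one can directly deduce for which instances of $w$ and $M$ the variety $Y_{w \cdot s_M}$ is toric,'' so the intended argument is precisely the logical complementation and case stratification you outline. Your bookkeeping of the $(i,k)$-strata and the negation of the disjunction in Theorem~\ref{thm: all toric mS}(3) is accurate, including the observation that $\mathrm{height}(\alpha[j]^{i+1}) \geq 1$ automatically for a non-first step.
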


Even if the variety remains toric in these cases, the opposite Rothe diagram changes, which may cause the graph $G_w$ and the weight cone to change as well. We list below the effects which the different cases from \ref{cor: toric cases} have on $L(w)$, and therefore the associated graph and the weight cone. Here, a hook becoming narrower means that it could potentially disappear.

\begin{table}[H]
    \centering
    \begin{tabular}{ |l|l| }
    Changes in $\Dc(w)$ & cases \\
    \hline
    no change in $L(w)$ & (5) if $k \neq \mathrm{last}$ \\
    new hook & (6) if $k \neq \mathrm{last}$, (7) if $i \neq \mathrm{last}$ \\
    hooks unchanged but weight cone changes & (2), (3) if $k \neq 1$ or $i \neq 1$, (4) if $i \neq 1$\\
    hook becomes shorter & (3) if $i=k=1$\\
    hook becomes taller & (1), (4) if $i=1$\\
    hook becomes narrower & (8)\\
    hook becomes wider & (7) if $i = \mathrm{last}$ \\
    \end{tabular}
\end{table}

    \noindent We observe that in many cases, the graph $G_w$ changes by deleting vertices or adding new connected component which is a complete bipartite graph. Below we study the cases where there are new edge additions $a \rightarrow b^*$ in the bipartite graph $G_w$, while there are simultaneously no changes in either the number of vertices or the number of connected components. In particular, while moving along a chain in the Bruhat poset, by multiplying with $s_M$, one gets a family of non-isomorphic toric varieties of same dimension. 

\begin{corollary}\label{cor: change of the weight cone}
Let $w\in S_n$ such that $Y_w$ is a toric variety. Following the numbering of Corollary~\ref{cor: toric cases}, if $w$ and $M$ are as in (2), as in (4) such that $i \neq 1$, or, if they are as in (3) such that $k \neq 1$ or $i \neq 1$, then $Y_{w \cdot s_M}$ is still toric and $L'(w\cdot s_M)$ does not differ from $L'(w)$. However, the graph $G_w$, and with it the edge cone, change in the following ways:
\begin{itemize}
    \item If $w$ and $M$ are as in (2) or as in (4) such that $i \neq 1$, the graph $G_w$ gains an edge $w(M) \to M^*$.
    \item If $w$ and $M$ are as in (3) and if $k \neq 1$ or $i \neq 1$, then $G_w$ loses an edge $w(M+1) \to M^*$.
\end{itemize}
\end{corollary}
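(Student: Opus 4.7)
The plan is a direct case analysis building on the enumeration in Corollary~\ref{cor: toric cases} and the staircase description of $\Dc(w)$ for toric $Y_w$ from Section~\ref{subsec: toric ms}. The central observation is that right-multiplication by $s_M$ swaps the $1$'s in columns $M$ and $M+1$ of the permutation matrix, so $\Dc(w)$ and $\Dc(w \cdot s_M)$ can differ only inside these two columns.

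I first verify that $L'(w \cdot s_M) = L'(w)$. In each of the listed sub-cases, $M$ lies strictly interior to a step of a staircase $\alpha[j]$ or $\beta[j]$, away from hook corners and from the northeast boundary of $\Dc$, so the essential set entries defining the hooks of $L'(w)$ are not repositioned by the swap. Combined with Corollary~\ref{cor: toric cases}, which already guarantees that $Y_{w \cdot s_M}$ is toric, this yields $\dim Y_{w \cdot s_M} = |L'(w \cdot s_M)| = |L'(w)| = \dim Y_w$.

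Next I locate the unique square where $L(w)$ and $L(w \cdot s_M)$ differ, by tracking the row positions of the $1$'s in columns $M$ and $M+1$. In cases (2) and (4) with $i \neq 1$, column $M$ is the first column of a non-first step of $\alpha[j]$, so $M$ is a descent of $w$; after the swap the $1$ in column $M$ rises by one row, and a direct check from the definition of $\Dc$ shows that the vacated square $(w(M), M)$ joins $\Dc(w \cdot s_M)$ and lies in $\SW \setminus \dom$. In case (3) with $k \neq 1$ or $i \neq 1$, $M$ is instead the last column of a non-last step with $\mathrm{height}(\alpha[j]^{i+1}) = 1$, so $M$ is an ascent; the $1$ in column $M$ drops by one row after the swap, and the square $(w(M+1), M)$ leaves $L$. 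No other column has its $1$ moved, so no other square of $\Dc$ changes status.

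Finally, I translate this single change of $L$ into the graph. The vertex $M^*$ already appears in $G_w$ because column $M$ contains other squares of $L(w)$ within its step, and the row-side endpoint ($w(M)$ or $w(M+1)$) is already present because its row meets $L(w)$ in the surrounding staircase; hence the vertex set of $G_w$ does not change. Since $\dim Y_w = \dim Y_{w \cdot s_M}$ forces $v - c$ to be preserved by Lemma~\ref{lem:weightcondim}, the number of connected components is preserved as well. Consequently $G_{w \cdot s_M}$ differs from $G_w$ exactly by adding the edge $w(M) \to M^*$ in cases (2) and (4) with $i \neq 1$ and by deleting $w(M+1) \to M^*$ in case (3) with $k \neq 1$ or $i \neq 1$, which is the claimed change in the edge cone. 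The main obstacle is the combinatorial bookkeeping of the row positions of the swapped $1$'s in each sub-case, especially in verifying that the modified square truly lands in $\SW \setminus \dom$ rather than in the dominant piece.
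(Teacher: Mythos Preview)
Your approach matches the paper's: there the corollary is stated without a standalone argument, being read off from the table of changes to $\Dc(w)$ that immediately precedes it (itself a byproduct of the case analysis in Theorem~\ref{thm: all toric mS}). Your explicit tracking of how swapping columns $M$ and $M+1$ relocates the two $1$'s and hence modifies $\Dc$, $L$, and $G_w$ is exactly the computation underlying the table entry ``hooks unchanged but weight cone changes''.

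Two places in your write-up are loose, though neither derails the conclusion. First, the sentence ``No other column has its $1$ moved, so no other square of $\Dc$ changes status'' is not correct as written: column $M+1$ also has its $1$ moved, and squares of $\Dc$ in that column can change status. What you actually need is that these changes do not affect $L(w)=\SW(w)\setminus\dom(w)$ beyond the single box you identify; this holds because $L$ depends on $\Dc$ only through $\Ess$ and $\dom$, and in each sub-case the northeast corners of the non-dominant staircase component and the dominant piece are unaffected except for the one box $(w(M),M)$ (resp.\ $(w(M+1),M)$) entering or leaving $L$. Second, ``rises/drops by one row'' is not literally true in every sub-case (for instance in case~(3) with $k>1$, or in case~(2) when $\mathrm{height}(\alpha[j]^i)=1$); what matters, and what you use, is only the identity of the row $w(M)$ or $w(M+1)$, not that the displacement equals one.
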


\section{Kazhdan-Lusztig Varieties}\label{sec: KL varieties}

\noindent We now want to introduce another, related type of varieties, the Kazhdan-Lusztig (KL) varieties $\Nvw$. These depend on two permutations $v$ and $w$. In a sense, Kazhdan-Lusztig varieties are generalisations of matrix Schubert varieties: for a specific choice, $v$, the Kazhdan-Lusztig variety $\Nvw$ is actually isomorphic to the matrix Schubert variety $\overline{X_{w'}}$, where $w'$ is an embedding of $w$ into a larger matrix. We refer the reader to \cite{donten2021complexity} for further intricacies of this connection between the two classes of varieties. To understand better how the KL variety $\Nvw$ depends on the two permutations, we first must introduce the partial order on $S_n$. 
 
\subsection{The Bruhat order}\label{subsec: Bruhat order}

\noindent In this section, we freely use definitions and results, as stated in  \cite{bjorner2005combinatorics}.
Any permutation $w\in S_n$ can be written as a product of simple reflections $w= s_{i_1} \cdots s_{i_k}$.
Such a representation of minimal length is called a \emph{reduced word expression} of $w$ and this minimal length is called the \emph{Coxeter length} $\ell(w)$.
We define the following partial order for the permutations in $S_n$, which is relevant for the definition of Kazhdan-Lusztig varieties.

\begin{definition}
    The \emph{Bruhat order} on $S_n$ is the partial order defined by: $v \leq w$ if and only if there exists a reduced word expression of $v$ that is a subword of a reduced word expression of $w$. More precisely, $v \leq w$ if $w= s_{i_1} \cdots s_{i_k}$ is a reduced word expression, then there exists a reduced word expression $v=s_{i_{j_1}} \cdots s_{i_{j_l}}$ of $v$ such that $1 \leq j_1 \leq \ldots \leq j_l \leq k$.
\end{definition}

\noindent This latter definition of the Bruhat order is also called \textit{the subword property}.
Note that if $v\leq w$, the permutation $w$ arises from $v$ by adding $\ell(w)-\ell(v)$ simple reflections. We denote this difference in length by $\ell(v,w)$. If $\ell(v,w)=1$ we say that \emph{$v$ is covered by $w$} in the Bruhat order and denote it as $v \lessdot w$. Equivalently, we can characterise the Bruhat order as follows.

\begin{lemma}[{\cite[Lemma 2.2.1]{bjorner2005combinatorics}}]
    Let $v,w \in S_n$. Then $v < w$ if and only if there exists some transposition $t \in T_n$ such that $w = vt$ and $\ell(v,w) >0$.
\end{lemma}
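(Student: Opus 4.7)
The plan is to prove the two directions separately, using the subword characterisation of the Bruhat order given immediately before the lemma; I write $t = t_{a,b}$ with $a < b$ throughout.

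\emph{Sufficiency.} Assume $w = vt$ with $t = t_{a,b}$ and $\ell(v,w) > 0$. Right-multiplication by $t$ swaps the values of $v$ at positions $a$ and $b$ and leaves all other positions fixed, so a direct inversion count gives $\ell(vt) - \ell(v) = 2\,\#\{k : a < k < b,\ v(a) < v(k) < v(b)\} + 1$ whenever $v(a) < v(b)$, and the negative of this quantity otherwise. The hypothesis $\ell(v,w) > 0$ therefore forces $v(a) < v(b)$. I would then produce a saturated chain of covers from $v$ to $w$ by induction on $b - a$. The base case $b = a + 1$ is immediate since $t = s_a$ and $v \lessdot v s_a = w$ by definition of a cover. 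For $b > a + 1$, I would use the identity $t_{a,b} = s_{b-1}\, t_{a,b-1}\, s_{b-1}$, apply the induction hypothesis to $t_{a,b-1}$, and handle the outer $s_{b-1}$ factors by a short case analysis on where $v(b-1)$ sits relative to $v(a)$ and $v(b)$. Splicing the resulting covers together gives $v < w$ in the Bruhat order.

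\emph{Necessity.} Assume $v < w$. By the subword definition, there is a reduced expression $w = s_{i_1} \cdots s_{i_k}$ containing a reduced expression for $v$ as a proper subword. Pick any letter $s_{i_r}$ of $w$ outside this subword and delete it to form $w' = s_{i_1} \cdots \widehat{s_{i_r}} \cdots s_{i_k}$; then $\ell(w') = \ell(w) - 1$ and $v \leq w'$. A direct cancellation yields $w = w' \cdot t$ with
\[
t \ =\ (s_{i_k} s_{i_{k-1}} \cdots s_{i_{r+1}})\, s_{i_r}\, (s_{i_{r+1}} \cdots s_{i_{k-1}} s_{i_k}),
\]
which is a conjugate of a simple reflection, hence an element of $T_n$, and $\ell(w) > \ell(w')$. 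When $v \lessdot w$ we may take $w' = v$ and $t$ is the required transposition. For general $v < w$, iterating the deletion argument produces a saturated chain of covers $v = u_0 \lessdot u_1 \lessdot \cdots \lessdot u_{\ell(v,w)} = w$, each cover $u_{i-1} \lessdot u_i$ realised in the form $u_i = u_{i-1}\, t_i$ with $t_i \in T_n$ and $\ell(u_i) > \ell(u_{i-1})$; the transposition asserted by the lemma is the one witnessing the first cover of such a chain above $v$.

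The main obstacle is the inductive step of the sufficiency direction: when the middle factor $t_{a,b-1}$ is sandwiched between two copies of $s_{b-1}$, one must verify that the intermediate permutations $v \cdot s_{b-1}$ and $v \cdot s_{b-1} \cdot t_{a,b-1}$ can be reordered into a strictly length-increasing chain of covers connecting $v$ to $w$. This requires a careful case analysis on the position of $v(b-1)$ relative to $v(a)$ and $v(b)$, together with a bookkeeping of how inversions involving $b-1$ interact with those involving $a$ and $b$.
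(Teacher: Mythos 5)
The paper gives no proof of this lemma --- it is quoted from Bj\"orner--Brenti as a black box --- so there is no in-paper argument to compare yours against; I can only judge the proposal on its own. Its more serious problem is in the necessity direction. Read literally, the statement requires a \emph{single} transposition $t$ with $w = vt$, and your argument does not (and cannot) produce one: your deletion procedure yields a chain $v = u_0 \lessdot u_1 \lessdot \cdots \lessdot u_k = w$ with $u_1 = v t_1 \leq w$, which is not $w = v t_1$ unless $k=1$. Indeed the literal ``only if'' is false: for $v = 123$ and $w = 231$ we have $v < w$, but $v^{-1}w$ is a $3$-cycle, so no transposition $t$ satisfies $w = vt$. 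The lemma has to be read as the standard fact that the Bruhat order is the transitive closure of the relation ``$w = vt$ for some $t \in T_n$ with $\ell(v,w)>0$'' (which is how the paper actually uses it, via covers and atoms). Your last sentence silently switches to that transitive-closure version; you need to say so explicitly rather than claim the single transposition ``asserted by the lemma'' is the first cover of the chain.

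The sufficiency direction is also not finished. The length formula $\ell(vt_{a,b}) - \ell(v) = 1 + 2\,\#\{k : a<k<b,\ v(a)<v(k)<v(b)\}$ is correct and correctly forces $v(a)<v(b)$, but the induction via $t_{a,b} = s_{b-1}\,t_{a,b-1}\,s_{b-1}$ is precisely where the difficulty sits, and you defer it: the intermediate element $v s_{b-1}$ satisfies $\ell(v s_{b-1}) = \ell(v)-1$ whenever $v(b-1) > v(b)$, so the three factors do not automatically splice into a length-increasing chain, and the promised ``short case analysis'' is the entire content of the step. Two standard ways to close this gap: (i) use the strong exchange condition --- from $\ell(v) < \ell(vt)$ and a reduced word $vt = s_{i_1}\cdots s_{i_q}$ one gets $v = s_{i_1}\cdots \widehat{s_{i_j}}\cdots s_{i_q}$ for some $j$, and extracting a reduced subword gives $v \le vt$ by the subword property; or (ii) induct on $\#\{k : a<k<b,\ v(a)<v(k)<v(b)\}$, inserting for any $c$ with $a<c<b$ and $v(a)<v(c)<v(b)$ the length-increasing chain $v < vt_{c,b} < vt_{c,b}t_{a,c} < vt_{c,b}t_{a,c}t_{c,b} = vt_{a,b}$. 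As written, your proposal identifies the right intermediate statements but leaves both load-bearing steps unproved.
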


\noindent The group $S_n$ is a poset with respect to the Bruhat order. On it, we can also define the \emph{Bruhat interval} $[v,w],$ for $v\leq w$, as the set $\{ u \in S_n \mid v \leq u \leq w \}$. The \emph{atoms} of $[v,w]$ are defined to be the elements of the interval with length $\ell(v)+1$. The Coxeter length of the permutation can also be determined via Rothe diagrams that have been introduced in Section~\ref{subsec: prelim mSV}:
$$\ell(w) = \frac{n(n-1)}{2} - |\Dc(w)|.$$

\noindent Also, note how the following theorem relates to the Fulton determinantal conditions from Theorem~\ref{thm:fultonessential}.
\begin{lemma}[{\cite[Theorem 2.1.5]{bjorner2005combinatorics}}]
    Let $v,w \in S_n$. Then $v \leq w$ in Bruhat order if and only if $r_v(a,b) \leq r_w(a,b)$
    for all $a,b \in \{1, \ldots, n \}$.
\end{lemma}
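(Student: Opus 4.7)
The plan is to translate the rank condition into a concrete combinatorial statement about the permutations, then induct on $\ell(v,w)$ in both directions. The starting observation is that for a permutation matrix $v$, the rank of the submatrix $v_{[a,b]}$ is just the number of ones it contains, so
\[
r_v(a,b) = |\{k \leq b : v(k) \geq a\}|,
\]
and likewise for $w$.

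For the forward implication, I would induct on $\ell(v,w)$, the base case $v=w$ being trivial. By the subword property, it suffices to handle a single Bruhat cover $v \lessdot w$, so $w = v \cdot t_{i,j}$ with $i<j$, $v(i) < v(j)$, and no index $k$ with $i<k<j$ satisfying $v(i) < v(k) < v(j)$. Since $v$ and $w$ coincide off positions $i$ and $j$, the formula above yields $r_v(a,b) = r_w(a,b)$ whenever $b < i$ or $b \geq j$, while for $i \leq b < j$ only the contribution from position $i$ differs, so $r_w(a,b) - r_v(a,b) = \mathbf{1}_{v(j) \geq a} - \mathbf{1}_{v(i) \geq a} \geq 0$, since $v(j) > v(i)$.

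For the reverse implication, the cleanest route is via the tableau criterion. Let $v[b]$ denote the tuple $(v(1), \ldots, v(b))$ rearranged in increasing order, with $i$-th coordinate $v[b]_i$. Then $r_v(a,b) = |\{i : v[b]_i \geq a\}|$, so for each fixed $b$ the condition $r_v(a,b) \leq r_w(a,b)$ for all $a$ is equivalent to the coordinatewise inequality $v[b]_i \leq w[b]_i$ for all $i$. Hence the rank hypothesis is exactly the tableau criterion $v[b]_i \leq w[b]_i$ for all $b, i$, which is classically equivalent to $v \leq w$ in Bruhat order by \cite[Theorem~2.6.3]{bjorner2005combinatorics}.

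The main obstacle, if one prefers to avoid invoking the tableau criterion and instead run a direct induction on $\ell(v,w)$ for the reverse direction, is constructing, for $v \neq w$ satisfying the rank inequalities, a Bruhat cover $v \lessdot v'$ with $v'$ still satisfying $r_{v'}(a,b) \leq r_w(a,b)$, so that the inductive hypothesis gives $v' \leq w$. The delicate step is choosing the transposition so that it is both a length-one cover (controlled by the no-intermediate-value criterion above) and preserves the pointwise rank bound against $w$; a natural approach is to let $j$ be an extremal index where $v$ and $w$ disagree and use strictness of a rank inequality at a carefully chosen $(a,b)$ to locate a companion index for the swap.
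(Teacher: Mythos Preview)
The paper does not give its own proof of this lemma; it is stated purely as a citation to \cite[Theorem~2.1.5]{bjorner2005combinatorics}. There is therefore nothing in the paper to compare your argument against.

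That said, your proposal is sound. The forward direction is clean and correct as written: reducing to a single cover $v \lessdot w = v\cdot t_{i,j}$ and computing the rank difference position-by-position is exactly the right move, and your formula $r_w(a,b)-r_v(a,b)=\mathbf 1_{v(j)\ge a}-\mathbf 1_{v(i)\ge a}\ge 0$ for $i\le b<j$ is correct. For the reverse direction, your translation of the rank inequalities into the coordinatewise inequality $v[b]_i\le w[b]_i$ on sorted prefixes is correct, and invoking the tableau criterion finishes it. As you note yourself, this makes the argument lean on another black-box characterisation from the same reference; the self-contained alternative you sketch (finding a cover $v\lessdot v'$ that still satisfies the rank bound against $w$) is exactly how the direct proof in \cite{bjorner2005combinatorics} proceeds, and the ``delicate step'' you identify is handled there by picking an extremal position of disagreement.
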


\subsection{Background on Kazhdan-Lusztig varieties} 
\noindent Again, we follow the exposition as laid out in \cite{donten2021complexity}. Denote $G := GL_n(\mathbb C)$. We consider the flag variety $G/B$, where $G$ acts on $G/B$ via left multiplication. The fixed points of the flag variety, under the action of $T$ are of the form $wB$, for a permutation $w \in S_n$. The closure of the orbit $BwB/B$ is the \textit{Schubert variety} $X_w \subseteq G/B$, which has dimension $\ell(w)$. The \textit{opposite Schubert cell} $\Omega_v^\circ$, corresponding to the permutation $v$ is $B_{-}vB/B$, where $B_-$ is the Borel subgroup of lower triangular matrices. 

\begin{definition}
    The \textit{Kazhdan-Lusztig (KL) variety} corresponding to the permutations $v$ and $w$ in $S_n$ is
    $$\mathcal{N}_{v,w} \coloneqq X_w \cap \Omega_v^\circ.$$
\end{definition}
\noindent We note that $\mathrm{dim}(\Nvw) = \ell(w)-\ell(v) = |\Dc(v)| - |\Dc(w)| $, as shown for example, in \cite[Cor 3.3]{woo2008governing}. Moreover, the KL variety $\Nvw$ is non-empty if and only if $v\leq w$. Again, as for the case of matrix Schubert varieties, there exists a determinantal description of the generators of the corresponding KL ideal. We first define the following space, leading on from \cite{woo2008governing} and \cite{woo2012grobner}. 
\begin{definition}
    Given $v\in S_n$, we define $\Sigma_v\subset\mathbb{C}^{n\times n}$ as the space that consists of matrices $Z$ such that:
   $$ \begin{cases}
        Z_{v(i),i} = 1 & \text{ for all } i \in [n],\\
        Z_{v(i),a} = 0 & \text{ for } a > i,\\
        Z_{b,i} = 0 & \text{ for } b<v(i).
    \end{cases}$$\\
    We denote a generic matrix in $\Sigma_v$ by $Z^{(v)}$. The representation above means that it can be constructed via the following procedure.
    \begin{itemize}
        \item Write down $1$s in the same positions as in the permutation matrix of $v$.
        \item Then, place $0$s in all of the entries north or east of a $1$.
        \item The remaining entries are left as free entries, indexed by $z_{ij}$.
    \end{itemize}
\end{definition}

\noindent Now, consider the following proposition, which allows us to infer the determinantal conditions.
\begin{proposition}[{\cite[Chapter~10]{fulton1997algebraic}}]. The map $\pi: G \rightarrow G/B$ that sends a matrix $Z$ to the coset $ZB$ induces an isomorphism between $\Sigma_v$ and $\Omega^\circ_v$.
\end{proposition}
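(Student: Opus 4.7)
The plan is to verify that $\pi$ sends $\Sigma_v$ into $\Omega_v^\circ$, to exhibit an algebraic bijection between them, and to conclude that $\pi|_{\Sigma_v}$ is an isomorphism of affine varieties.

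For the first step, given $Z \in \Sigma_v$, I would form $u := Zv^{-1}$. The $j$-th column of $u$ is the $v^{-1}(j)$-th column of $Z$, which by the defining equations of $\Sigma_v$ carries a $1$ at row $v(v^{-1}(j)) = j$ and zeros in all rows strictly above. Hence $u$ is lower-triangular unipotent, $u \in B_-$, and $\pi(Z) = ZB = uvB \in B_-vB/B = \Omega_v^\circ$. This identification already encodes $\Sigma_v$ as a closed subvariety of the unipotent lower-triangular group $U_- \subseteq B_-$: writing $Z = uv$, the remaining condition $Z_{v(i),a} = 0$ for $a > i$ becomes $u_{v(i), v(a)} = 0$, i.e.\ $u_{r,s} = 0$ whenever $v^{-1}(r) < v^{-1}(s)$ (setting $r = v(i), s = v(a)$). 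Together with lower-triangularity, the nonzero off-diagonal entries of $u$ are supported on $\{(r,s) : r > s,\, v^{-1}(s) < v^{-1}(r)\}$; call this subgroup $U_-^v$, so that $\Sigma_v = U_-^v \cdot v$.

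For bijectivity I would invoke the Bruhat decomposition: first, $B_- v B = U_- v B$, since the diagonal torus in $B_-$ is normalised by $v$ and absorbs into $B$. Next, $U_-$ admits a group-theoretic factorisation $U_- = U_-^v \cdot U_-^{v,\perp}$, where $U_-^{v,\perp}$ is the complementary subgroup of lower-triangular unipotents whose off-diagonal support is $\{(r,s) : r > s,\, v^{-1}(s) > v^{-1}(r)\}$. A direct calculation on matrix entries shows $v^{-1} U_-^{v,\perp} v \subseteq U_+ \subseteq B$, so $U_-^{v,\perp} \cdot v \subseteq v B$, and therefore $U_- v B = U_-^v \cdot v B$. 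Uniqueness of the $U_-^v$-representative of a coset follows from the triviality of $U_-^v \cap v B v^{-1}$, visible by comparing supports on either side.

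Combining these steps, the composition
\[
\Sigma_v \xrightarrow{\ Z\mapsto Zv^{-1}\ } U_-^v \xrightarrow{\ u \mapsto uvB\ } \Omega_v^\circ
\]
is a bijection; both arrows are algebraic — the first is an affine change of coordinates inside $\mathrm{Mat}_{n\times n}$, and the second is the restriction of the quotient map $G \to G/B$ to a locally closed subvariety on which it admits a polynomial inverse given by the column-reduction recipe implicit in Step 1. Hence $\pi|_{\Sigma_v}$ is an isomorphism of affine varieties. The main obstacle will be pinning down the complementary decomposition $U_- = U_-^v \cdot U_-^{v,\perp}$ and the absorption $U_-^{v,\perp} \cdot v \subseteq vB$ carefully enough to deliver both the existence and uniqueness of the $U_-^v$-representative; once this root-theoretic bookkeeping is in place, everything else is immediate from the explicit formulas.
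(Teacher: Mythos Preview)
The paper does not supply its own proof of this proposition; it is simply attributed to Fulton's \emph{Young Tableaux}, Chapter~10, and quoted as a black box. Your argument is the standard Bruhat-decomposition proof: identify $\Sigma_v$ with $U_-^v \cdot v$ where $U_-^v = U_- \cap v U_- v^{-1}$, use the factorisation $U_- = U_-^v \cdot (U_- \cap v U_+ v^{-1})$ together with $v^{-1}(U_- \cap v U_+ v^{-1})v \subseteq B$ to see that $U_-^v$ maps bijectively onto $\Omega_v^\circ$, and then observe that the inverse is regular. The root-theoretic bookkeeping you outline is correct, and the plan goes through; you have essentially reproduced the classical argument that the paper elects to cite rather than reprove.
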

\noindent
Thus, following \cite{woo2008governing}, given $v,w \in S_n$ we have:
    $$\Nvw \cong \Xw \cap \Sigma_v.$$
Hence, the defining ideal of $\Nvw$ is generated by the determinantal equations obtained from imposing the rank conditions associated to $w$ from Theorem~\ref{thm:fultonessential} onto $Z^{(v)}$. We denote a generic element of $\Nvw$ by $\Zg$.

\begin{example}\label{ex: 53412 + 43125}
    Consider the permutations $w = 53412$ and $v=43125$. Note that $w>v$. The opposite Rothe diagrams $\Dc(w)$ and $\Dc(v)$ of $w$ and $v$ look as follows:\\

\begin{center}
\begin{tikzpicture}[scale = 0.5]

  \draw[step=1.0,black,thin] (0,0) grid (5,5);

    \node at (0.5,0.5) {{$1$}}; 
   \node at (1.5,2.5) {{$1$}}; 
   \node at (2.5,1.5) {{$1$}}; 
   \node at (3.5,4.5) {{$1$}};
   \node at (4.5,3.5) {{$1$}};

  \draw [line width = 0.0mm, draw=CornflowerBlue, fill=CornflowerBlue, draw opacity = 0.5, fill opacity=0.5]
       (1,1) -- (1,2) -- (2,2) -- (2,1)  -- cycle;
   \draw [line width = 0.0mm, draw=CornflowerBlue, fill=CornflowerBlue, draw opacity = 0.5, fill opacity=0.5]
       (3,3) -- (4,3) -- (4,4) -- (3,4)  --cycle;

  \draw[line width = 0.7mm, draw=Gray, draw opacity = 0.4]
  (0.5, 1) -- (0.5,5);
  \draw[line width = 0.7mm, draw=Gray, draw opacity = 0.4]
  (1.5, 3) -- (1.5,5);
  \draw[line width = 0.7mm, draw=Gray, draw opacity = 0.4]
  (2.5, 2) -- (2.5,5);
  \draw[line width = 0.7mm, draw=Gray, draw opacity = 0.4]
  (3.5, 5) -- (3.5,5);
  \draw[line width = 0.7mm, draw=Gray, draw opacity = 0.4]
  (4.5, 4) -- (4.5,5);

  \draw[line width = 0.7mm, draw=Gray, draw opacity = 0.4]
  (1, 0.5) -- (5,0.5);
  \draw[line width = 0.7mm, draw=Gray, draw opacity = 0.4]
  (3, 1.5) -- (5,1.5);
  \draw[line width = 0.7mm, draw=Gray, draw opacity = 0.4]
  (2, 2.5) -- (5,2.5);
  \draw[line width = 0.7mm, draw=Gray, draw opacity = 0.4]
  (5,3.5 ) -- (5,3.5);
  \draw[line width = 0.7mm, draw=Gray, draw opacity = 0.4]
  (4, 4.5) -- (5,4.5);

  \draw[line width = 0.4mm, draw=magenta]
  (0, 2) -- (2,2) -- (2,0);
  \draw[line width = 0.4mm, draw=magenta]
  (0, 4) -- (4,4) -- (4,0);

\end{tikzpicture}  
\qquad
\begin{tikzpicture}[scale = 0.5]

  \draw[step=1.0,black,thin] (0,0) grid (5,5);

    \node at (0.5,1.5) {{$1$}}; 
   \node at (1.5,2.5) {{$1$}}; 
   \node at (2.5,4.5) {{$1$}}; 
   \node at (3.5,3.5) {{$1$}};
   \node at (4.5,0.5) {{$1$}};

  \draw [line width = 0.0mm, draw=CornflowerBlue, fill=CornflowerBlue, draw opacity = 0.5, fill opacity=0.5]
       (0,0) -- (0,1) -- (4,1) -- (4,0)  -- cycle;
   \draw [line width = 0.0mm, draw=CornflowerBlue, fill=CornflowerBlue, draw opacity = 0.5, fill opacity=0.5]
       (2,3) -- (3,3) -- (3,4) -- (2,4)  --cycle;

  \draw[line width = 0.7mm, draw=Gray, draw opacity = 0.4]
  (0.5, 2) -- (0.5,5);
  \draw[line width = 0.7mm, draw=Gray, draw opacity = 0.4]
  (1.5, 3) -- (1.5,5);
  \draw[line width = 0.7mm, draw=Gray, draw opacity = 0.4]
  (2.5, 5) -- (2.5,5);
  \draw[line width = 0.7mm, draw=Gray, draw opacity = 0.4]
  (3.5, 4) -- (3.5,5);
  \draw[line width = 0.7mm, draw=Gray, draw opacity = 0.4]
  (4.5, 1) -- (4.5,5);

  \draw[line width = 0.7mm, draw=Gray, draw opacity = 0.4]
  (5, 0.5) -- (5,0.5);
  \draw[line width = 0.7mm, draw=Gray, draw opacity = 0.4]
  (1, 1.5) -- (5,1.5);
  \draw[line width = 0.7mm, draw=Gray, draw opacity = 0.4]
  (2, 2.5) -- (5,2.5);
  \draw[line width = 0.7mm, draw=Gray, draw opacity = 0.4]
  (4,3.5 ) -- (5,3.5);
  \draw[line width = 0.7mm, draw=Gray, draw opacity = 0.4]
  (3, 4.5) -- (5,4.5);

\end{tikzpicture}  
.
\end{center}

\noindent The dimension of $\Nvw$ is thus three. Furthermore, $\mathrm{Ess}(w) = \{(4,2),(2,4)\}$. In the diagram, the areas on which the Fulton conditions apply are outlined in pink in $\Dc(w)$. 
A generic element in $\Sigma_v$ can be written as
$$Z^{(v)} = \begin{pmatrix}
    0&0&1&0&0 \\
    0&0& z_{23} & 1&0 \\
    0&1&0&0&0 \\
    1&0&0&0&0 \\
    z_{51}&z_{52}&z_{53}&z_{54}&1
\end{pmatrix}.$$

\noindent Now, we know that $\Nvw$ is the variety given by the determinantal conditions $r_{Z^{(v)}}(a,b)\leq r_w(a,b)$, for $(a,b) \in \mathrm{Ess}(w)$. As we have two elements in the essential set, we consider the following rank conditions. 
    \begin{itemize}
        \item $r_{Z^{(v)}}(4,2) \leq r_{w}(4,2) = 1$. Thus,  $0=\left|\begin{array}{cc}
            1 & 0 \\
            z_{51} & z_{52} 
        \end{array}\right| = z_{52}$.
        \item $r_{Z^{(v)}}(2,4) \leq r_{w}(2,4) = 3$. Thus, $0=\left|  \begin{array}{cccc}
            0 & 0 & z_{23} & 1  \\
            0 & 1 & 0 & 0  \\
            1 & 0 & 0 & 0  \\
            z_{51} & z_{52} & z_{53} & z_{54}
        \end{array}\right| = z_{53} - z_{23} z_{54}$. 
    \end{itemize}
    Hence, the ideal, in this case, is given by $\langle z_{52}, z_{53} - z_{23} z_{54}\rangle$.
\end{example}

\noindent In practice, for simplicity, we will sometimes record information like the one in the previous example in a single diagram as follows:

\begin{center}
\begin{tikzpicture}[scale = 0.5]

  \draw[step=1.0,black,thin] (0,0) grid (5,5);

    \node at (0.5,1.5) {{$1$}}; 
   \node at (1.5,2.5) {{$1$}}; 
   \node at (2.5,4.5) {{$1$}}; 
   \node at (3.5,3.5) {{$1$}};
   \node at (4.5,0.5) {{$1$}};

  \draw [line width = 0.0mm, draw=CornflowerBlue, fill=CornflowerBlue, draw opacity = 0.3, fill opacity=0.3]
       (0,0) -- (0,1) -- (4,1) -- (4,0)  -- cycle;
   \draw [line width = 0.0mm, draw=CornflowerBlue, fill=CornflowerBlue, draw opacity = 0.3, fill opacity=0.3]
       (2,3) -- (3,3) -- (3,4) -- (2,4)  --cycle;

  \draw[line width = 0.7mm, draw=Gray, draw opacity = 0.4]
  (0.5, 2) -- (0.5,5);
  \draw[line width = 0.7mm, draw=Gray, draw opacity = 0.4]
  (1.5, 3) -- (1.5,5);
  \draw[line width = 0.7mm, draw=Gray, draw opacity = 0.4]
  (2.5, 5) -- (2.5,5);
  \draw[line width = 0.7mm, draw=Gray, draw opacity = 0.4]
  (3.5, 4) -- (3.5,5);
  \draw[line width = 0.7mm, draw=Gray, draw opacity = 0.4]
  (4.5, 1) -- (4.5,5);

  \draw[line width = 0.7mm, draw=Gray, draw opacity = 0.4]
  (5, 0.5) -- (5,0.5);
  \draw[line width = 0.7mm, draw=Gray, draw opacity = 0.4]
  (1, 1.5) -- (5,1.5);
  \draw[line width = 0.7mm, draw=Gray, draw opacity = 0.4]
  (2, 2.5) -- (5,2.5);
  \draw[line width = 0.7mm, draw=Gray, draw opacity = 0.4]
  (4,3.5 ) -- (5,3.5);
  \draw[line width = 0.7mm, draw=Gray, draw opacity = 0.4]
  (3, 4.5) -- (5,4.5);

  \draw[line width = 0.4mm, draw=magenta]
  (0, 2) -- (2,2) -- (2,0);
  \draw[line width = 0.4mm, draw=magenta]
  (0, 4) -- (4,4) -- (4,0);

  \node at (1.5,0.5) {\small \textcolor{Blue}{$0$}}; 
  \node at (0.5,0.5) {\small \textcolor{Blue}{$z_{51}$}}; 
  \node at (2.5,0.5) {\small \textcolor{Blue}{$z_{53}$}}; 
  \node at (3.5,0.5) {\small \textcolor{Blue}{$z_{54}$}}; 
  \node at (2.5,3.5) {\small \textcolor{Blue}{$z_{23}$}};
\end{tikzpicture}.
\end{center}

\noindent From this, one can read off the opposite Rothe diagram of $v$; the
structure of $\Zg$ (with possible zero entries); and the area of of $Z^{(v)}$ on which the Fulton conditions apply.

\subsection{The usual torus action on KL varieties} Now, we wish to define the complexity for the usual torus action on KL varieties. This action is the restriction of the  $T$-action on $\Sigma_v$, induced by the action of left multiplication on the opposite Schubert cell via the map $\pi$, onto $\Nvw$. We follow the exposition, as detailed in \cite{donten2021complexity}. We denote the weight cone associated to the KL variety $\Nvw$ as $\sigma_{v,w}$. 
The weights for each coordinate $z_{ij}$ are $e_{v(j)}-e_i$, where $e_1, \ldots, e_n$ again denotes the standard basis of $M(T)_{\mathbb R}$. To highlight which coordinates of $\Zg$ give us such weights, the authors define the notion of unexpected zeros in \cite[Def 4.8]{donten2021complexity}. In practice, they use the definition to refer to all zeros in $\Zg$. In fact, one needs an even smaller set of generators than the ones coming from nonzero coordinates to span the weight cone, which is why we extend the notion of unexpected zeros to the following:

\begin{definition}\label{def: unexpected zero}
    An \textit{unexpected zero} for $\Nvw$, is an entry $z_{ij}$ of $Z^{(v)}$ such that $t_{v(j),i}v \not\leq w$. 
\end{definition} 
\noindent
The coordinates $z_{ij}$ that are not unexpected zeros
are always nonzero coordinates of $\Zg$; but as we will see in Example \ref{ex: actual unexp zeros}, the converse must not hold. Then, the weight cone $\sigma_{v,w}$ is spanned by weights corresponding to the $z_{ij}$ that are not unexpected zeros:
$$\sigma_{v,w} = \mathrm{ Cone } (e_{v(j)}-e_i|z_{ij} \text{ is not an unexpected zero}).$$
In fact, by \cite[Theorem 4.11]{donten2021complexity}, the extremal ray generators are the weights corresponding to $z_{ij}$ such that $v \lessdot t_{v(j),i}v \leq w$. 

\begin{example}\label{ex: actual unexp zeros}[Example~\ref{ex: 53412 + 43125} continued]
    Continuing with $w=53412$ and $v=43125$, we know that a generic matrix in $\Nvw$ is given by
   $$\Zg = \left(\begin{array}{ccccc}
        0 & 0 & 1 & 0 & 0 \\
        0 & 0 & z_{23} & 1 & 0 \\
        0 & 1 & 0 & 0 & 0 \\
        1 & 0 & 0 & 0 & 0 \\
        z_{51} & \textcolor{magenta}{0} & z_{53} & z_{54} & 1 \\
   \end{array}\right).$$
   The zero marked in pink arises from the Fulton conditions. 
   However, there is another unexpected zero, namely $z_{53}$, and indeed, $t_{15}v \not\leq w$. 
   This arises from the fact that one of the Fulton conditions is $z_{53}=z_{23}z_{54}$. Therefore,
   if we know how the torus acts on $z_{23}$ and $z_{54}$, then we also know how it acts on $z_{53}$. Thus, $\sigma_{v,w} = \mathrm{Cone}(e_4-e_5, e_1-e_2, e_2-e_5)$ and it is $3$-dimensional. We also note that $\dim\Nvw = |\Dc(v)| - |\Dc(w)|=3$. So, the $T$-action on $\Nvw$ has a dense orbit. 
\end{example}

\begin{remark}\label{rem: corresp zeros}
It is still true that the weight cone is generated by the weights corresponding to actual non-zero coordinates $z_{ij}$ of $\Zg$. Any such weight coming from $z_{ij} \neq 0$ that is an unexpected zero can then be achieved as a linear combination of weights coming from non unexpected zeros. For example, in Example~\ref{ex: actual unexp zeros}, the weight for $z_{53}$ can be written as the sum of weights for $z_{23}$ and $z_{54}$. More generally, we can also write weights of coordinates $z_{ij}$ that are not unexpected zeros but have $\ell(v,t_{v(j),i}v)>1$ as linear combinations of other weights, since they are not minimal ray generators.
\end{remark}
\noindent We are again able to compute the dimension of the weight cone via an associated DAG, as done in \cite[Definition 4.14]{donten2021complexity} (although note that the notation differs).

\begin{definition}\label{def: graph Gwv}
Let $v, w \in S_n$. We then define the graph $\Gvwt$ to be the graph given by:
    $$V(\Gvwt) = [n], \hspace{0.4cm} E(\Gvwt) = \{(v(j)\rightarrow i)\mid t_{v(j),i}v \leq w, \,\, (i,j) \in \Dc(v)  \}.$$
\end{definition}

\noindent Then, since the weight cone $\sigma_{v,w}$ coincides with the edge cone of $\Gvwt$,

we can use this graph, along with Lemma~\ref{lem:weightcondim} to define the dimension of the weight cone: 
\begin{align*}
\text{dim}(\sigma_{v,w}) & 
= |V(\Gvwt)|-\#(\text{connected components of } \Gvwt). 
 \end{align*}
As we know, the KL variety $\Nvw$ is then of complexity $k$ if and only if
$\text{ dim }(\sigma_{v,w}) = \text{ dim }(\Nvw) - k.$
\begin{example}
    We conclude the running example $w=53412$ and $v=43125$. The graph $\Gvwt$ is drawn below:

\begin{center}
\begin{tikzpicture}[->,>=stealth',auto,node distance=1cm]
    \node (G) {$\Gvwt:$};
    \node[shape=circle,draw=black] (1) [right of =G] {1};
    \node[shape=circle,draw=black] (2) [right of=1] {2};
    \node[shape=circle,draw=black] (3) [right of=2] {3};
    \node[shape=circle,draw=black] (4) [right of=3] {4};
    \node[shape=circle,draw=black] (5) [right of=4] {5};

    \path[every node/.style={font=\sffamily\small}]
    (1) edge node [right] {} (2)
    (4) edge node [right] {} (5);
    \draw[->] (2) to[out=45, in=135, looseness=1] (5);
  
\end{tikzpicture}
\end{center}
\noindent 
Then, the dimension of the weight cone is $5-3=2$.
\end{example}

\subsection{Glueing toric Bruhat intervals}\label{subsec: glueing toric intervals}
In this section, we are concerned with how the complexity of a KL variety changes when we move one of the permutations $v$ or $w$ along a chain in the Bruhat poset. This question has been posed before in \cite[Section 5.4]{donten2021complexity}, but was left unanswered.
We use many ideas first formulated in \cite{tsukerman2015} and relate them to our work. First, we invoke the following definition. 
\begin{definition}[{\cite[Definition 4.9]{tsukerman2015}}]
Let $v \leq w$ and $\overline{T}(v):= \{ t \in T_n \mid v \lessdot vt \leq w \}$ be the set of transpositions $t$ such that $vt$ is an atom of the interval $[v,w]$. Define the graph \emph{$G^{\textrm{at}}$} with vertex set $[n]$ by adding an edge $(a,b)$ if and only if $t_{a,b} \in \overline{T}(v)$.
\end{definition}
\noindent 
By removing the decomposable edges from $\Gvwt$, one obtains a directed acyclic graph as defined in \cite[Definition 4.14]{donten2021complexity}, which has edges 
$v(j)\rightarrow i$ for 
$(i,j) \in \Dc(v)$ with
$v \lessdot t_{v(j),i}v \leq w$. These correspond to the minimal ray generators of $\sigma_{v,w}$. Clearly, the connected components of this new graph coincide with those of $\Gvwt$. Furthermore, one can see that its underlying undirected graph is isomorphic to the graph $G^{\textrm{at}}$. In \cite[Corollary 4.16]{donten2021complexity}, this is used to show that the complexity of the KL variety $\Nvw$ is the same as that of the Richardson variety $X^v_w$.
We now recall the notion of another graph related to an interval $[v,w]$, as defined in \cite{tsukerman2015}.
\begin{definition}[{\cite[Definition 4.5]{tsukerman2015}}]
    Let $v \leq w$ and let $\mathcal C: v= u_0 \lessdot u_1 \lessdot \ldots \lessdot u_l =w$ be a chain from $v$ to $w$. Define the undirected graph $G_{\mathcal C}$ with vertex set $[n]$ by adding an edge $(a,b)$ if and only if $u_{i+1} = u_i t_{a,b}$ for some $0 \leq i \leq l-1$.
\end{definition}

\noindent Again, this graph $G_{\mathcal C}$ is related to the other graphs, as explained below.

\begin{lemma}[{\cite[Corollary 4.8, Corollary 3.13, Proposition 4.10]{tsukerman2015}}]
    Let $v \leq w$. The connected components of $G_{\mathcal C}$ are independent of the choice of the chain $\mathcal C$. 
    There exists a chain $\mathcal C$ from $v$ to $w$ such that $G_{\mathcal C} = G^{\textrm at}$. In particular, both graphs have the same connected components.
\end{lemma}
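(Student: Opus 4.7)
The plan is to address the three claims of the lemma in sequence, drawing on the combinatorics of the Bruhat interval $[v,w]$.

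For the first claim---independence of the connected components of $G_{\mathcal C}$ from the choice of chain---my approach is a standard chain-surgery argument. The classical fact that any two saturated chains in a Bruhat interval are connected by a sequence of local flips (replacing a length-two subchain $u_i \lessdot u_{i+1} \lessdot u_{i+2}$ by an alternative $u_i \lessdot u'_{i+1} \lessdot u_{i+2}$ with the same endpoints) reduces the claim to showing invariance under a single flip. I would then perform a case analysis on how the supports of the two transposition pairs overlap: if the old pair is $\{t_{a,b}, t_{c,d}\}$ and the new pair is $\{t_{a',b'}, t_{c',d'}\}$, then the underlying vertex-sets $\{a,b,c,d\}$ and $\{a',b',c',d'\}$ agree (or differ in a controlled way), so that the two pairs of edges contribute the same connectivity between the four vertices involved. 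The remaining edges of $G_{\mathcal C}$ are unaffected, so the connected components are preserved.

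For the second claim---existence of a chain $\mathcal C$ with $G_{\mathcal C} = G^{\mathrm{at}}$---I would proceed by induction on $\ell(v,w)$. The base case $v=w$ is trivial. For the inductive step, one picks an atom $vt_{a,b}$ of $[v,w]$ with $t_{a,b} \in \overline{T}(v)$, and seeks a chain from $vt_{a,b}$ to $w$ such that, together with the initial step $v \lessdot vt_{a,b}$, the full set of transpositions used is exactly $\overline{T}(v)$. The nontrivial input here is that each remaining $t_{c,d} \in \overline{T}(v) \setminus \{t_{a,b}\}$ must be realized somewhere in the remainder of the chain. I would argue this by showing that the initial atom can be chosen so that each other atom $t_{c,d}$ either descends to an atom of the sub-interval $[vt_{a,b}, w]$, or else appears further along after suitably ordering subsequent atoms; iterating the construction along the chain then exhausts $\overline{T}(v)$ without introducing extraneous transpositions.

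The main obstacle is exactly this compatibility issue in the second claim, since atoms of the sub-interval $[vt_{a,b}, w]$ need not agree with atoms of $[v,w]$. To navigate this, I would rely on structural properties of Bruhat atoms (in particular, how $\overline{T}(v)$ propagates under taking subintervals in the cited results of Tsukerman), and also invoke the invariance from the first claim as a sanity check: the connected components of $G_{\mathcal C}$ computed from any chain must match those of $G^{\mathrm{at}}$, which constrains how the atoms can be distributed along the chain. The third claim then follows immediately from the second, combined with the first.
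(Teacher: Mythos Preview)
The paper does not supply its own proof of this lemma: it is quoted directly from \cite{tsukerman2015} (their Corollary~4.8, Corollary~3.13, Proposition~4.10) and used as a black box. So there is no in-paper argument to compare against; I can only assess your proposal on its own merits.

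Your strategy for the first claim is correct. Bruhat intervals have the diamond property, and any two maximal chains are connected by elementary length-two flips (this follows, e.g., from EL-shellability). Under such a flip the two transposition pairs satisfy $t_1 t_2 = \tilde t_1 \tilde t_2$; a short case split (disjoint supports give the same pair of edges, overlapping supports give two edges of a triangle on three vertices) shows the induced partition of $[n]$ is unchanged. That part would go through cleanly.

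Your strategy for the second claim has a genuine gap, and it is precisely the one you flag but do not resolve. The induction you set up does not close: applying the inductive hypothesis to $[vt_{a,b},w]$ produces a chain whose edge set is $\overline{T}(vt_{a,b})$ for that subinterval, and there is no reason this should lie inside $\overline{T}(v)$. Concretely, take $v=123$, $w=321$ in $S_3$. Then $\overline{T}(v)=\{t_{1,2},t_{2,3}\}$, so $G^{\mathrm{at}}$ is the path $1\text{--}2\text{--}3$. Starting with the atom $vt_{1,2}=213$, the atoms of $[213,321]$ are $\{t_{1,3},t_{2,3}\}$; the chain your induction hands back is $213\lessdot 312\lessdot 321$, and prepending $123\lessdot 213$ yields $G_{\mathcal C}$ equal to the full triangle on $\{1,2,3\}$, not $G^{\mathrm{at}}$. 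A chain realising $G^{\mathrm{at}}$ does exist here ($123\lessdot 213\lessdot 231\lessdot 321$, using $t_{1,2},t_{2,3},t_{1,2}$), but it is \emph{not} the one the inductive hypothesis selects in the subinterval. So ``choose the initial atom well and iterate'' is not enough; you need a mechanism that controls which transpositions appear along the entire chain simultaneously. In \cite{tsukerman2015} this is handled via their generalised lifting property (their Theorem~3.3/Corollary~3.13), which is a nontrivial structural input and not something one recovers from a bare induction on $\ell(v,w)$.
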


\begin{example}\label{ex: different graphs}
Consider the permutations $v=12435$ and $w=41325$. Two possible chains across the interval $[v,w]$ are 
$
    \mathcal C_1: v \lessdot 14235 \lessdot 41235 \lessdot w
$
and 
$
    \mathcal C_2: v \lessdot 13425 \lessdot 31425 \lessdot w
$. 
Also, the set of atoms is $\{14235, 13425, 21435 \}$. Then, the graphs below all have the same connected components.

\begin{minipage}{0.45\textwidth}
\hspace{0.55cm}
\begin{tikzpicture}[auto,node distance=1cm, scale = 0.6]
    \node (G) {$G_{\mathcal C_1}$:};
    \node[shape=circle,draw=black] (1) [right of =G] {1};
    \node[shape=circle,draw=black] (2) [right of=1] {2};
    \node[shape=circle,draw=black] (3) [right of=2] {3};
    \node[shape=circle,draw=black] (4) [right of=3] {4};
    \node[shape=circle,draw=black] (5) [right of=4] {5};

    \path[every node/.style={font=\sffamily\small}]
    (1) edge node [right] {} (2)
    (2) edge node [right] {} (3)
    (3) edge node [right] {} (4);
\end{tikzpicture}
\end{minipage}
\begin{minipage}{0.45\textwidth}
\hspace{0.15cm}
\begin{tikzpicture}[auto,node distance=1cm, scale = 0.6]
    \node (G) {$G_{\mathcal C_2}$:};
    \node[shape=circle,draw=black] (1) [right of =G] {1};
    \node[shape=circle,draw=black] (2) [right of=1] {2};
    \node[shape=circle,draw=black] (3) [right of=2] {3};
    \node[shape=circle,draw=black] (4) [right of=3] {4};
    \node[shape=circle,draw=black] (5) [right of=4] {5};

    \path[every node/.style={font=\sffamily\small}]
    (1) edge node [right] {} (2)
    (2) edge node [right] {} (3);
    \draw[-] (2) to[out=45, in=135, looseness=1] (4);
\end{tikzpicture}
\end{minipage} \\
\begin{minipage}{0.45\textwidth}
\hspace{1cm}
\begin{tikzpicture}[auto,node distance=1cm, scale = 0.6]
    \node (G) {$G^{\mathrm{at}}$:};
    \node[shape=circle,draw=black] (1) [right of =G] {1};
    \node[shape=circle,draw=black] (2) [right of=1] {2};
    \node[shape=circle,draw=black] (3) [right of=2] {3};
    \node[shape=circle,draw=black] (4) [right of=3] {4};
    \node[shape=circle,draw=black] (5) [right of=4] {5};

    \path[every node/.style={font=\sffamily\small}]
    (1) edge node [right] {} (2)
    (2) edge node [right] {} (3);
    \draw[-] (2) to[out=45, in=135, looseness=1] (4);
  
\end{tikzpicture}
\end{minipage}
\begin{minipage}{0.45\textwidth}
\hspace{0.5cm}
\begin{tikzpicture}[->,>=stealth',auto,node distance=1cm, scale = 0.6]
    \node (G) {$G_{v,w}$:};
    \node[shape=circle,draw=black] (1) [right of =G] {1};
    \node[shape=circle,draw=black] (2) [right of=1] {2};
    \node[shape=circle,draw=black] (3) [right of=2] {3};
    \node[shape=circle,draw=black] (4) [right of=3] {4};
    \node[shape=circle,draw=black] (5) [right of=4] {5};

    \path[every node/.style={font=\sffamily\small}]
    (1) edge node [right] {} (2)
    (2) edge node [right] {} (3);
    \draw[->] (2) to[out=45, in=135, looseness=1] (4);
  \end{tikzpicture}
\end{minipage}
\end{example}

\noindent Now, in order to relate these concepts to our work, we introduce the following.
\begin{definition}\label{defn:cycnumb}
    For a graph $G$, its \emph{cyclomatic number}
    $$\cn (G) = |E(G)| - |V(G)| + \#(\text{connected components of }G)$$
    is the minimum number of edges that must be removed from $G$ to make it into a forest.
\end{definition}

\noindent As a consequence, 
\begin{align*}
    \dim \sigma_{v,w} &= n- \#(\textrm{connected components of } G_{v,w}) \\
    &= n- \#(\textrm{connected components of } G_{\mathcal C}) \\
    &= |E(G_{\mathcal C})| - \cn(G_{\mathcal C}) \\
    &= \ell(v,w) - \cn(G_{\mathcal C}).
\end{align*}
Thus, we can formalise this thinking as below.
\begin{corollary}
    Let $v \leq w$. The complexity of $\Nvw$ is given by $\cn(\GC)$. In particular, $\Nvw$ is toric if and only if $\GC$ is a forest.
\end{corollary}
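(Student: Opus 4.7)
The plan is that this corollary is essentially an immediate consequence of the chain of equalities displayed just above its statement, combined with the definition of complexity from Section~1 and the dimension formula $\dim \Nvw = \ell(w)-\ell(v) = \ell(v,w)$ recorded in Section~\ref{sec: KL varieties}. Since all the substantive work (identifying $\dim \sigma_{v,w}$ with $n$ minus the number of connected components of $\GC$, and rewriting $|E(\GC)| = \ell(v,w)$) has already been carried out in the lemma and displayed computation preceding the statement, the argument is a short bookkeeping step.

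Concretely, I would proceed as follows. First, I recall that by definition of complexity of a $T$-variety, the complexity of $\Nvw$ equals $\dim \Nvw - \dim \sigma_{v,w}$. Using $\dim \Nvw = \ell(v,w)$, this becomes $\ell(v,w) - \dim \sigma_{v,w}$. Next, invoking the displayed chain of equalities, I rewrite $\dim \sigma_{v,w} = \ell(v,w) - \cn(\GC)$, where the key input is that $|E(\GC)| = \ell(v,w)$ since each cover relation along a saturated chain from $v$ to $w$ contributes exactly one edge, and that the connected components of $\GC$ agree with those of $\Gvwt$ by the preceding lemma. Substituting then gives $\mathrm{complexity}(\Nvw) = \cn(\GC)$.

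For the second statement, I use Definition~\ref{defn:cycnumb}: a graph is a forest if and only if its cyclomatic number vanishes. Since $\Nvw$ is toric precisely when its complexity is zero, the first part of the corollary immediately yields that $\Nvw$ is toric if and only if $\cn(\GC) = 0$, i.e.\ if and only if $\GC$ is a forest.

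There is no real obstacle here; the only subtle point worth flagging is that the statement depends on the independence of the connected components of $\GC$ from the choice of the chain $\mathcal C$, which is guaranteed by the previously cited result from~\cite{tsukerman2015}, ensuring that $\cn(\GC)$ is itself independent of the chain (as $|E(\GC)|$ always equals $\ell(v,w)$ and $|V(\GC)|=n$).
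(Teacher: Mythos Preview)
Your proposal is correct and follows exactly the paper's approach: the corollary is stated immediately after the displayed chain of equalities showing $\dim \sigma_{v,w} = \ell(v,w) - \cn(\GC)$, and the paper treats it as the direct formalisation of that computation together with $\dim \Nvw = \ell(v,w)$. Your added remark about independence of $\cn(\GC)$ from the choice of chain is a fair clarification, though the paper leaves it implicit.
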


\noindent Applying \cite[Corollary 4.16]{donten2021complexity}, this also recovers the result \cite[Proposition 4.12]{tsukerman2015}. Furthermore,  we can use it to gain immediate information about the complexity of short Bruhat intervals.

\begin{corollary}
    For $v,w$ with $\ell(v,w) \leq 2$, the KL variety $\Nvw$ is always toric.
    \begin{proof}
        For $v \lessdot w = vt$ the graph $G_{\mathcal C}$ has only one edge. Adding another edge only makes a cycle if it corresponds to reversing the permutation $t$.
    \end{proof}
\end{corollary}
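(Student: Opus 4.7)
The plan is to apply the preceding corollary, which equates toricness of $\Nvw$ with $\GC$ being a forest for any saturated chain $\mathcal{C}$ from $v$ to $w$. It therefore suffices to verify that $\GC$ contains no cycles whenever $\ell(v,w)\leq 2$, and this can be done by case analysis on $\ell(v,w)$.

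The cases $\ell(v,w)\in\{0,1\}$ are immediate. If $\ell(v,w)=0$, then $v=w$, the chain $\mathcal{C}$ is trivial, and $\GC$ has no edges. If $\ell(v,w)=1$, then $\mathcal{C}:v\lessdot w$ contributes exactly one edge to $\GC$, and a single edge cannot form a cycle. In either case $\GC$ is a forest.

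For $\ell(v,w)=2$, I would fix a saturated chain $v\lessdot u\lessdot w$ and write $u=vt_1$, $w=ut_2$ for transpositions $t_1,t_2\in T_n$. If $t_1=t_2$, then $w=vt_1t_2=v$, contradicting $\ell(v,w)=2$; hence $t_1\neq t_2$, and $\GC$ carries exactly two distinct simple edges on the vertex set $[n]$. Whether these edges share a vertex (yielding a path of length two) or are disjoint (yielding two disjoint edges), the resulting graph together with the remaining isolated vertices has $c=n-2$ connected components, so $\cn(\GC)=|E|-|V|+c=2-n+(n-2)=0$. Thus $\GC$ is a forest and $\Nvw$ is toric.

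The only delicate point is ruling out $t_1=t_2$ in the length-$2$ case, as an identical pair of chain steps is precisely what would collapse to a single edge in $\GC$ while secretly contributing a length-$2$ cycle to the cyclomatic number at the level of the chain; this is handled by the observation that $t_1=t_2$ forces $v=w$, which is incompatible with $\ell(v,w)=2$. Everything else is a routine count.
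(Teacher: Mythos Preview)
Your proof is correct and follows essentially the same approach as the paper: both arguments use the criterion that $\Nvw$ is toric if and only if $G_{\mathcal C}$ is a forest, and both observe that with at most two edges the only way to produce a cycle is to repeat the same transposition, which is ruled out since $t_1=t_2$ would force $w=v$. Your write-up is simply more explicit (including the trivial $\ell(v,w)=0$ case and the cyclomatic-number count), whereas the paper compresses the same reasoning into two sentences.
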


\noindent We can now address the question of how the complexity of the KL variety $\Nvw$ changes when we move along a chain in the Bruhat poset. 
 \begin{proposition}\label{prop: complexity change}
    When $w' \lessdot w$, the complexity of the KL variety $\Nvw$ is either the same, or one more than the complexity of $\mathcal N_{v,w'}$. 
    \begin{proof}
        Extending the interval adds one edge to the corresponding graph $G_C$, which can reduce the number of connected components by at most one.
    \end{proof}
\end{proposition}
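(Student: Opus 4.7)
The plan is to leverage the graph-theoretic characterisation of complexity established just before the statement, namely that $\mathrm{complexity}(\Nvw) = \cn(\GC)$ for any chain $\mathcal{C}$ from $v$ to $w$. Choose any chain $\mathcal{C}'\colon v = u_0 \lessdot u_1 \lessdot \cdots \lessdot u_{l-1} = w'$ realising the complexity of $\mathcal{N}_{v,w'}$, and extend it to a chain $\mathcal{C}$ from $v$ to $w$ by appending the single cover $w' \lessdot w$. Writing $w = w' \, t_{a,b}$, by the definition of $\GC$ the new graph $G_{\mathcal{C}}$ is obtained from $G_{\mathcal{C}'}$ either by adding the single edge $\{a,b\}$ (if it was not already present) or by no change at all.

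Next, analyse how the cyclomatic number $\cn(G) = |E(G)| - |V(G)| + c(G)$ reacts to the addition of a single edge on the fixed vertex set $[n]$. If no edge is added then trivially $\cn(G_{\mathcal{C}}) = \cn(G_{\mathcal{C}'})$. Otherwise, there are precisely two sub-cases: the endpoints $a,b$ already lie in the same connected component of $G_{\mathcal{C}'}$, in which case $c$ is unchanged and $\cn$ increases by exactly one; or $a,b$ lie in distinct connected components, in which case $c$ drops by one and $\cn$ is unchanged. In either situation the cyclomatic number either stays equal or grows by one, and converting via the corollary yields the claim.

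I do not expect any substantive obstacle here, since the whole content of the statement reduces to the elementary combinatorics of adjoining one edge to a graph. The only subtle point worth flagging is that one uses the chain-independence of $\cn(\GC)$ (a consequence of the corollary combined with Tsukerman's result quoted in the excerpt) in order to be allowed to compute $\mathrm{complexity}(\Nvw)$ through a chain that is forced to pass through $w'$; without this, the comparison between the two complexities would not be immediate.
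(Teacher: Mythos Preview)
Your argument is correct and follows essentially the same route as the paper's proof: both extend a chain in $[v,w']$ by the single cover $w'\lessdot w$, observe that this adds at most one edge to $G_{\mathcal C}$, and track the resulting change in connected components (equivalently, cyclomatic number). Your version is simply more explicit, spelling out the two sub-cases and the chain-independence ingredient that the paper leaves implicit.
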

\noindent We are then immediately able to state the following.
\begin{corollary}\label{cor: KL toric then}
If $\Nvw$ is toric, then the KL variety associated to any subinterval of $[v,w]$ is again toric.

\end{corollary}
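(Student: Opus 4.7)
The plan is to leverage the characterisation from the preceding corollary stating that $\Nvw$ is toric if and only if $G_{\mathcal C}$ is a forest, together with the freedom to choose the chain $\mathcal C$. The key observation is that a subinterval of $[v,w]$ gives rise to a graph whose edge set sits inside that of a suitably chosen $G_{\mathcal C}$ on the same vertex set $[n]$.

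Concretely, let $[v', w']$ be a subinterval with $v \le v' \le w' \le w$. First, I would pick an arbitrary saturated chain $\mathcal C': v' = u'_0 \lessdot u'_1 \lessdot \dots \lessdot u'_m = w'$ inside this subinterval. Next, I would extend this chain on both ends: choose a saturated chain from $v$ up to $v'$ and a saturated chain from $w'$ up to $w$, and concatenate them with $\mathcal C'$ to obtain a saturated chain $\mathcal C$ from $v$ to $w$ that contains $\mathcal C'$ as a contiguous subchain.

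Now, since $\Nvw$ is toric, the graph $G_{\mathcal C}$ is a forest. By the definition of $G_{\mathcal C'}$ and $G_{\mathcal C}$, both graphs have vertex set $[n]$, and every edge of $G_{\mathcal C'}$ corresponds to a cover relation $u'_{i+1} = u'_i\, t_{a,b}$ which is also a cover relation in $\mathcal C$; hence $E(G_{\mathcal C'}) \subseteq E(G_{\mathcal C})$. A subgraph on the same vertex set of a forest is itself a forest, so $G_{\mathcal C'}$ has cyclomatic number zero. Invoking the corollary once more, $\mathcal N_{v',w'}$ is toric.

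There is no real obstacle here since everything reduces to the fact that subgraphs of forests are forests; the only subtlety is to make sure the chain $\mathcal C'$ through the subinterval is realised as a subchain of some $\mathcal C$ through $[v,w]$, which is immediate from the existence of saturated chains between any two comparable elements in the Bruhat order. Note that this argument also gives a slight strengthening of Proposition~\ref{prop: complexity change}, namely that for \emph{any} subinterval $[v',w'] \subseteq [v,w]$ we have $\cn(G_{\mathcal C'}) \le \cn(G_{\mathcal C})$, so the complexity of $\mathcal N_{v',w'}$ is bounded above by that of $\Nvw$.
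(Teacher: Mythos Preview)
Your proof is correct and essentially the same as the paper's (implicit) argument. The paper states the corollary as an immediate consequence of Proposition~\ref{prop: complexity change} (together with its analogue for moving $v$), whose proof is precisely the observation that extending a chain adds edges to $G_{\mathcal C}$; you bypass that intermediate step and work directly with the forest characterisation, but the content is identical---subgraphs of forests are forests.
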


\noindent As a consequence of Proposition \ref{prop: complexity change}, for any extension of an interval with toric KL variety by one level, the resulting KL variety can either still be toric or of complexity one. One can easily characterise the case in which it remains toric.

\begin{proposition}\label{prop: extend toric interval}
    Let $\mathcal N_{v,w'}$ be toric with a chain $\mathcal C$ in $[v,w']$ and $w' \lessdot w$ with $w=w't_{ab}$. Then, $\Nvw$ is toric if and only if the vertices $a$ and $b$ are not in the same connected component of $G_{\mathcal C}$.
    \begin{proof}
        Since $\mathcal N_{v,w'}$ is toric, $G_{\mathcal C}$ is a forest. In order for $\Nvw$ to not be toric, adding the edge $(a,b)$ to $G_{\mathcal C}$ must create a cycle (or a double edge) in $G_{\mathcal C}$. This happens precisely when $a$ and $b$ are in the same connected component.
    \end{proof}
\end{proposition}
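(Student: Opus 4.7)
The plan is to reduce the statement to an elementary fact about adjoining a single edge to a forest, using the characterisation of toricness in terms of $G_{\mathcal{C}}$ provided by the corollary immediately preceding the proposition.

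First I would extend the given chain $\mathcal{C}\colon v = u_0 \lessdot u_1 \lessdot \cdots \lessdot u_{\ell(v,w')} = w'$ to a chain $\mathcal{C}'$ in $[v,w]$ by appending the cover $w' \lessdot w = w't_{ab}$. Directly from the definition of the graph associated to a chain, $G_{\mathcal{C}'}$ is obtained from $G_{\mathcal{C}}$ by adjoining a single edge between the vertices $a$ and $b$ (possibly parallel to an edge already present, if $t_{ab}$ happens to appear elsewhere along $\mathcal{C}$).

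Next I would invoke the preceding corollary, which states that $\Nvw$ is toric if and only if $G_{\mathcal{C}'}$ is a forest, and likewise for $\mathcal{N}_{v,w'}$. Since $\mathcal{N}_{v,w'}$ is assumed toric, $G_{\mathcal{C}}$ is a forest, and the whole question reduces to the elementary graph-theoretic statement: adjoining an edge $(a,b)$ to a forest $F$ produces another forest precisely when $a$ and $b$ lie in distinct connected components of $F$. If they lie in distinct components, the new edge merges two trees into a single tree and preserves the forest property; if they lie in the same component, the unique $F$-path between $a$ and $b$ together with the new edge forms a cycle, and in the parallel-edge case the two copies themselves form a cycle of length two.

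The only genuine subtlety beyond routine graph theory is that the condition "$a$ and $b$ lie in distinct components of $G_{\mathcal{C}}$" must be independent of the choice of chain $\mathcal{C}$ in $[v,w']$ for the statement even to make sense; this independence is precisely the content of Tsukerman's lemma recalled earlier, which says the connected components of $G_{\mathcal{C}}$ are invariants of the Bruhat interval. I do not foresee any serious obstacle: the core of the argument is the forest-plus-edge observation, and all necessary translation between combinatorics and the geometry of $\Nvw$ is already supplied by the preceding corollary and lemma.
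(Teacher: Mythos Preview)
Your proposal is correct and follows essentially the same approach as the paper: extend the chain by the cover $w'\lessdot w$, observe that $G_{\mathcal C'}$ is $G_{\mathcal C}$ with the edge $(a,b)$ adjoined, and use the forest characterisation of toricness to reduce to the elementary fact that adding an edge to a forest creates a cycle precisely when the endpoints lie in the same component. Your write-up is simply more explicit (and helpfully flags the chain-independence of the connected components), but the argument is the same.
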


\noindent Analogous results hold for extending an interval $[v',w]$ to $[v,w]$ with $v \lessdot v'$. The last result can also be generalised to the glueing of two intervals with toric KL varieties.

\begin{lemma}\label{lem: gluing two intervals}
    Let $[u,v]$, $[v,w]$ be intervals with chains $\mathcal C_1$, $\mathcal C_2$ be such that $\mathcal N_{u,v}$ and $\mathcal N_{v,w}$ are toric. Then, $\mathcal N_{u,w}$ is toric if and only if there exist no vertices $a_1, \ldots,a_{2k} \in [n]$, components $B_1,\ldots, B_{k}$ of $G_{\mathcal C_1}$ and components $C_1,\ldots, C_{k}$ of $G_{\mathcal C_2}$ such that:
    \begin{align*}
    a_1 \in V(B_1) \cap V(C_1), \, 
      a_2 \in V(C_1) \cap V(B_2), \,
      a_3 \in V(B_2) \cap V(C_2), \, \ldots \, , \\
      a_{2k-1} \in V(B_{k}) \cap V(C_k), \, 
      a_{2k} \in V(C_k) \cap V(B_1).
     \end{align*}
     \end{lemma}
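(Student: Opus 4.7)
The plan is to reduce the problem to a purely graph-theoretic question about unions of forests. Concatenating $\mathcal{C}_1$ and $\mathcal{C}_2$ produces a chain $\mathcal{C}$ from $u$ to $w$, and the graph $G_{\mathcal{C}}$ associated to this concatenated chain equals $G_{\mathcal{C}_1} \cup G_{\mathcal{C}_2}$ as simple graphs on vertex set $[n]$. Since $\mathcal{N}_{u,w}$ is toric iff $G_{\mathcal{C}}$ is a forest (by the corollary preceding Proposition~\ref{prop: complexity change}), and $G_{\mathcal{C}_1}$, $G_{\mathcal{C}_2}$ are both forests by hypothesis, the claim reduces to: \emph{the union of these two forests on $[n]$ fails to be a forest iff the alternating data of the lemma exist.}

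To handle this, I would introduce an auxiliary bipartite multigraph $H$ whose vertices are the connected components $B_i$ of $G_{\mathcal{C}_1}$ together with the connected components $C_j$ of $G_{\mathcal{C}_2}$, and with one edge between $B_i$ and $C_j$ for every element of $V(B_i) \cap V(C_j)$. The existence of vertices $a_1, \ldots, a_{2k}$ and components $B_1, \ldots, B_k$, $C_1, \ldots, C_k$ as in the lemma is precisely the statement that $H$ contains a cycle $B_1 C_1 B_2 C_2 \cdots B_k C_k B_1$, where the $a_i$ label the edges traversed (the case $k = 1$ corresponding to a pair of parallel edges between a single pair $B_1, C_1$). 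Hence it suffices to prove the equivalence: $G_{\mathcal{C}_1} \cup G_{\mathcal{C}_2}$ is a forest if and only if $H$ is a forest.

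From a cycle in $H$ to a cycle in the union: using the tree property, between any two vertices of a common tree there is a unique path, so each edge of the $H$-cycle determines a path inside the corresponding $B_i$ or $C_j$ connecting its two endpoints. Concatenating these paths yields a closed walk in $G_{\mathcal{C}_1} \cup G_{\mathcal{C}_2}$ whose segments alternate between edges of $G_{\mathcal{C}_1}$ and $G_{\mathcal{C}_2}$, and which therefore cannot collapse to a trivial walk. Consequently it contains a simple cycle. Conversely, given any cycle in $G_{\mathcal{C}_1} \cup G_{\mathcal{C}_2}$, decompose its edges into maximal runs lying entirely in one of the two forests. Since neither forest alone contains a cycle, both strata must appear and the runs alternate; each run is a path inside a single component of the corresponding forest, and the vertices at which consecutive runs meet, together with those components, yield the desired data $a_1, \ldots, a_{2k}$, $B_1, \ldots, B_k$, $C_1, \ldots, C_k$.

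The main obstacle is ensuring non-degeneracy of the constructions in both directions. In particular one must treat the boundary case $k = 1$ carefully: there a pair $(B_1, C_1)$ shares two distinct vertices $a_1 \neq a_2$, and the two unique paths joining them (one in each tree) immediately furnish a cycle in the union. Edges lying in both $G_{\mathcal{C}_1}$ and $G_{\mathcal{C}_2}$ can be assigned to either stratum without affecting the argument, and one checks that any attempt at self-cancellation in the alternating closed walk would force two cyclically consecutive components of the $H$-cycle to coincide, contradicting its non-degeneracy. Once these technical points are cleared, the equivalence between forest-ness of $G_{\mathcal{C}_1} \cup G_{\mathcal{C}_2}$ and of $H$ gives the lemma.
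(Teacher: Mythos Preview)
Your proposal is correct and follows the same idea as the paper: the paper's proof is the single sentence ``combining the graphs $G_{\mathcal C_1}$ and $G_{\mathcal C_2}$ creates a cycle precisely in the scenario given in the theorem statement,'' and you have simply unpacked this assertion. The auxiliary bipartite multigraph $H$ is a clean way to formalise the alternating condition and to handle the $k=1$ case, but it is an elaboration of the paper's argument rather than a different route.
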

\begin{proof}
  This arises from the fact that combining the graphs $G_{\mathcal C_1}$ and $G_{\mathcal C_2}$ creates a cycle precisely in the scenario given in the theorem statement.
\end{proof}     

\noindent In the final part of this section, we will provide some observations to help apply Proposition~\ref{prop: extend toric interval}. 
In practice, one might not always know what a chain $\mathcal C$ in the given interval looks like. We want to determine the connected components of $G_{\mathcal C}$ for a toric KL variety simply by looking at the one-line notations. 
We say that $a \in [n]$ is \textit{moved in} $[v,w]$, if $u \lessdot u t \leq w$ for some $u \in [v,w]$ and $t \in S_n$ changing the position of $a$. This happens precisely if the vertex $a$ is not isolated $G_{\mathcal C}$.

\begin{lemma}\label{lem:toric move}
    If $\Nvw$ is toric, then $a\in [n]$ is moved in $[v,w]$ if and only if $v(a) \neq w(a)$.
    \begin{proof}
        It is clear that if $v(a) \neq w(a)$ then $a$ is moved in $[v,w]$. For the converse, assume, for contradiction, that $a$ is moved in $[v,w]$ but $v(a)=w(w)$. Then, there must be transpositions $t_{a,b_1},t_{b_1,b_2}, \ldots, t_{b_{l-1},b_l},t_{b_l,a}$ to move $v(a)$ back to its original position. This is a cycle in $G_{\mathcal C}$, contradicting the toricness of $\Nvw$.
    \end{proof}
\end{lemma}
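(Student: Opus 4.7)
The forward direction, $v(a) \neq w(a) \Rightarrow a$ is moved in $[v,w]$, does not use toricness: along any saturated chain $v = u_0 \lessdot u_1 \lessdot \cdots \lessdot u_l = w$ with $u_{i+1} = u_i t_i$, the value $u_i(a)$ starts at $v(a)$ and ends at $w(a)$, so some $t_i$ must satisfy $u_{i+1}(a) \neq u_i(a)$; equivalently $t_i(a) \neq a$, which forces $t_i = t_{a,b}$ for some $b$ and witnesses that $a$ is moved.

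For the converse, I argue by contradiction. Suppose $\Nvw$ is toric, $a$ is moved, and $v(a) = w(a)$. Fix any saturated chain $\mathcal{C}: v = u_0 \lessdot \cdots \lessdot u_l = w$ with covering transpositions $t_i$, and track the position of the value $v(a)$ by setting $p_0 := a$ and $p_i := t_i(p_{i-1})$. A direct calculation using $u_i = v \cdot t_1 \cdots t_i$ yields $u_i(p_i) = v(a)$, so $p_i$ is the position occupied by the value $v(a)$ in $u_i$; the hypothesis $w(a) = v(a)$ forces $p_l = a$. Because $a$ is moved, some $t_j$ involves $a$, and a short induction shows $p_i = a$ for all $i < j$, so the walk $p_0, p_1, \ldots, p_l$ performs at least one genuine move at step $j$ and eventually returns to $a$.

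Deleting the trivial steps $p_i = p_{i-1}$ from this sequence produces a non-trivial closed walk at $a$ in $\GC$; moreover each traversed edge of $\GC$ corresponds to a distinct chain-step, so this is a closed trail of positive length in the multigraph realisation of $\GC$. Any such trail contains a cycle, whence $\cn(\GC) \geq 1$, contradicting the toricness characterisation $\cn(\GC) = 0$ from the corollary preceding the lemma. The main subtlety I anticipate is handling the case in which some transposition $t_{a,b}$ repeats in $\mathcal{C}$: the resulting two-step loop $a \to b \to a$ of the walk comes from two distinct chain-steps, producing parallel edges in the multigraph $\GC$ that still register as a cycle in $\cn(\GC)$, so the contradiction persists in all cases.
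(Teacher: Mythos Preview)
Your proof is correct and follows essentially the same approach as the paper's: track the value $v(a)$ along a saturated chain, observe that if $v(a)=w(a)$ it must return to position $a$, and conclude that the resulting closed walk in $G_{\mathcal C}$ contradicts toricness. Your version is more carefully worked out than the paper's brief sketch --- in particular you make explicit the position sequence $p_i$, justify $u_i(p_i)=v(a)$, and correctly handle the multigraph subtlety (repeated transpositions yield parallel edges, so the closed walk is a genuine closed trail whose existence forces $\cn(G_{\mathcal C})\geq 1$), a point the paper leaves implicit when it simply calls the resulting sequence ``a cycle in $G_{\mathcal C}$''.
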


\noindent It is then possible to read off the connected components of an interval with toric KL variety, simply by comparing the one-line notations.

\begin{lemma}\label{lem:ccs of GC}
    If $\Nvw$ is toric, then the connected components of $G_{\mathcal C}$ are the sets of vertices $\{a_1, \ldots, a_k\} \subset [n]$ such that $v(a_i)=w(a_{i+1})$ for all $i=1,\ldots,k-1$ and $v(a_k)=w(a_i)$.
    \begin{proof}
        We conclude this by induction. If $v \lessdot w$, then the claim is obvious. Let $\Nvw$ be toric and $w'<w$, such that $\mathcal N_{v,w'}$ is also toric. By Proposition \ref{prop: extend toric interval}, the new chain $t$ from $w'$ to $w$ connects two connected components that were previously distinct. The vertices of these components have to be a set of the above form and the connection coming from $t$ will create a bigger such set.
    \end{proof}
\end{lemma}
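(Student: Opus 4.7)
The plan is to proceed by induction on $\ell(v,w)$, closely following the sketch already indicated. For the base case $v \lessdot w$, write $w = v \cdot t_{a,b}$; the chain $\mathcal{C}$ has a single step, so $G_{\mathcal{C}}$ is a single edge $\{a,b\}$ together with isolated vertices. The non-trivial component $\{a,b\}$ satisfies the cyclic matching condition because $w(a) = v(b)$ and $w(b) = v(a)$, and each isolated vertex $c$ trivially satisfies $v(c) = w(c)$ (the case $k=1$).

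For the inductive step, choose $w' \lessdot w$ with $w = w' \cdot t_{a,b}$ and extend a chain $\mathcal{C}'$ from $v$ to $w'$ by this final transposition to obtain $\mathcal{C}$; then $G_{\mathcal{C}}$ differs from $G_{\mathcal{C}'}$ only by the new edge $\{a,b\}$. By Corollary~\ref{cor: KL toric then}, $\mathcal{N}_{v,w'}$ is toric, and by Proposition~\ref{prop: extend toric interval} the vertices $a$ and $b$ lie in distinct components of $G_{\mathcal{C}'}$. The inductive hypothesis identifies these two components with cyclic matching sets $A = \{a_1, a_2, \ldots, a_k\}$ and $B = \{b_1, b_2, \ldots, b_l\}$ (with $a_1 = a$, $b_1 = b$) satisfying $v(a_i) = w'(a_{i+1})$, $v(a_k) = w'(a_1)$ and analogously for $B$.

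The key verification is that the concatenated sequence $a_1, a_2, \ldots, a_k, b_1, b_2, \ldots, b_l$ is a cyclic matching set for $v$ and $w$. Since $w$ agrees with $w'$ outside positions $\{a,b\}$, and since the indices $a_2, \ldots, a_k, b_2, \ldots, b_l$ all differ from $a,b$ (because $A$ and $B$ are disjoint components with $a_1=a$, $b_1=b$), the interior equalities $v(a_i) = w(a_{i+1})$ for $i=1,\ldots,k-1$ and $v(b_j) = w(b_{j+1})$ for $j=1,\ldots,l-1$ are inherited from the inductive hypothesis. The two junction equalities follow from
\[v(a_k) = w'(a_1) = w'(a) = w(b) = w(b_1), \qquad v(b_l) = w'(b_1) = w'(b) = w(a) = w(a_1).\]
The remaining components of $G_{\mathcal{C}'}$ are unchanged in $G_{\mathcal{C}}$, and their cyclic matching property is preserved for $v,w$ since their vertices avoid $\{a,b\}$.

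Conversely, the partition of $[n]$ into cyclic matching sets for $v,w$ is obtained from the corresponding partition for $v,w'$ by merging the (distinct) sets containing $a$ and $b$: indeed, these sets are the cycles of $w^{-1}v = t_{a,b}(w')^{-1}v$, and multiplying a permutation by a transposition whose elements lie in two different cycles merges those cycles. Since the component-partition of $G_{\mathcal C}$ is obtained from that of $G_{\mathcal{C}'}$ by exactly the same merging operation, and the two partitions coincide by the inductive hypothesis, they also coincide after extending to $w$. The main bookkeeping obstacle is the careful index tracking at the merging step and the observation that none of the interior indices of $A$ or $B$ collide with $a$ or $b$; once this is confirmed, the inductive step goes through mechanically.
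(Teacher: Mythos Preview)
Your proof is correct and follows essentially the same inductive approach as the paper: induct on $\ell(v,w)$, use Corollary~\ref{cor: KL toric then} and Proposition~\ref{prop: extend toric interval} to reduce to merging two components of $G_{\mathcal C'}$, and verify that the merged set is again a cyclic matching set. Your explicit verification of the junction equalities and the observation that the cyclic matching sets are precisely the cycles of $w^{-1}v$ (so that the two partitions evolve by the same merge and hence coincide) flesh out details that the paper leaves implicit, but the strategy is the same.
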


\begin{example}
Consider $[v,w]$ as in Example \ref{ex: different graphs}. We already know what the connected components of $G_{\mathcal C_1}$ look like, but we can check that they are indeed as specified in Lemma \ref{lem:ccs of GC}. 
If we extend $\mathcal C_1$ to $w_1 = 42315 \gtrdot w$, we add the edge $(2,4)$ to $G_{\mathcal C_1}$, creating a circle. The complexity of $\mathcal N_{v,w_1}$ is $1$. Note that although $v(2)= 2= w_1(2)$, the vertex $2$ is moved in $[v,w_1]$.
If we extend $\mathcal C_1$ to $w_2 = 41352 \gtrdot w$, we add the edge $(4,5)$ to $G_{\mathcal C_1}$, connecting two previously distinct components. The variety $\mathcal N_{v,w_2}$ is still toric.
\end{example}

\section{Toric Families of KL varieties}\label{sec: toric families of KL}

\noindent
We are interested in how the permutations $v$ and $w$ influence the complexity of the Kazhdan-Lusztig variety $\Nvw$. While this is a broad field, restricting our attention to special kinds of families of KL varieties allows us to make such statements and even characterise the toric cases.
 
\subsection{KL varieties with no unexpected zeros}

\noindent In this section, we will have a closer look at the structure of the graph $\Gvwt$ via the matrices $Z^{(v)}_w$ in the KL variety. In particular, for KL varieties $\Nvw$ coming from permutations $v$ and $w$ for which there are no unexpected zeros in $\Sigma_v$, the opposite Rothe diagram innately contains useful information about the graph $\Gvwt$ and the complexity of $\Nvw$. 

We know, via Remark \ref{rem: corresp zeros}, that the weight cone $\sigma_{v,w}$ can not only be generated by weights coming from non unexpected zeros, but also by the weights coming from nonzero coordinates $z_{ij}$ of $\Zg$. We define the corresponding graph
$\Gvwtt$ by $V(\Gvwtt) = [n]$ and
$$E(\Gvwtt) = \{ (v(j) \to i) \mid z_{ij} \textrm{ nonzero in } \Zg \}.$$
Then, $\Gvwt$ arises from $\Gvwtt$ by removing some decomposable edges; and the edge cone of $\Gvwtt$ is the weight cone $\sigma_{v,w}$. Whenever $\Zg$ has no unexpected zeros, it also has no coordinates that are zero. In this case, the graphs are the same. Therefore, statements about the complexity of $\Nvw$ in the case that there are no zero $z_{ij}$ are stronger than statements about the complexity in the case that there are no unexpected zeros. In the following section, we will make observations and prove statements about the graph $\Gvwt$ in the latter case, but one can easily see that everything is transferrable to the first setting if one considers $\Gvwtt$ instead. In the following, when we say a coordinate is nonzero, this can either mean it is not an unexpected zero or it is just actually not zero. The precise meaning should be clear from the context.

To understand the structure of $\Gvwt$, it is natural to ask where its edges and paths originate from in $\Zg$. 
In fact, the edges corresponding to coordinates $z_{ij}, z_{kl}$ in $\Zg$ share a vertex in $\Gvwt$ if and only if one of the following cases in the table hold.

\begin{table}[H]
\centering
\begin{tabular}{c|c|c}
     & in $\Gvwt$ & notation $\stackrel{\ast}{\dash}$\\
     \hline
    $i=k$ & $\rightarrow \boldsymbol{\cdot} \leftarrow$ & $\stackrel{r}{\dash}$\\
    $j=l$ & $\leftarrow \boldsymbol{\cdot} \rightarrow$ & $\stackrel{c}{\dash}$\\    
    $v(j) =k$ & $\leftarrow \boldsymbol{\cdot} \leftarrow$ & $\stackrel{_{\nwarrow}}{\dash}$\\
    $v(l) =i$ & $\rightarrow \boldsymbol{\cdot} \rightarrow$ &  $\stackrel{_{\searrow}}{\dash}$  
\end{tabular} \\
\caption{\small{In each of the cases on the left, the middle column shows in which way the edges corresponding to $z_{ij},z_{kl}$ touch the common vertex, also given by the left column. The right column shows how we denote such a connection in a path through the entries of $\Zg$. }}
\label{table}
\end{table}

\noindent The last two cases correspond to there being a $1$ in $\Zg$ in the same row as $z_{kl}$ and the same column as $z_{ij}$ or vice versa. In this way, we can track (an undirected) path in $\Gvwt$ via a path through the nonzero entries in $\Zg$. In this path, each step either moves within the same row or column; it crosses over a one in the same column to move to a lower row and column; or it crosses over a one in the same row to move to a higher row and column. We denote such a path as 
$z_{i_1 j_1} \stackrel{\ast}{\dash} z_{i_2 j_2} \stackrel{\ast}{\dash} \cdots \stackrel{\ast}{\dash} z_{i_r j_r}$, where the $\stackrel{\ast}{\dash}$ are as in the third column of the table and specify what the step looks like. This is also illustrated in Example \ref{ex: paths and cycles}.

\subsubsection{Connected components of $\Gvwt$}

\noindent For now, we are interested in the connected components of $\Gvwt$. Since the dimension of the weight cone depends on the number of connected components, so does the complexity of $\Nvw$. 

If $w$ imposes no unexpected zeros on $\Sigma_v$, then all components of $\Dc(v)$ that are not connected to each other by paths using $\stackrel{r}{\dash}$ or $\stackrel{c}{\dash}$ belong to different connected components of $\Gvwt$. Therefore, sets of elements in $\Dc(v)$ belonging to a certain connected component of $\Gvwt$ group together along the antidiagonal of $v$ and, of course, do not share a row or column. Indeed, if there were connected components $D_1, D_2$ of $\Dc(v)$ belonging to distinct connected components of $\Gvwt$, with one lying south-east of the other, then the coordinates $z_{ij}$, where $i$ is a row in the south-east component and $j$ is a column in the north-west component, are nonzero in $\Zg$ and induce an edge connecting the components in $\Gvwt$. We formalise this below, in order to count the number of connected components of $\Gvwt$.

\begin{lemma}\label{lem: connected components for no unexpected zeros}
    Let $v,w \in S_n$ be such that there are no unexpected zeros in  $\Sigma_v$.
    Then, the set of connected components of $\Gvwt$ with more than one vertex has the same cardinality as the set of south-west corners, written formally as:
    $$C_v := \{ (i,j)\in \Dc(v) \mid \forall i'>i: \, (i',j) \not\in \Dc(v) \,\, \& \,\, \forall j'<j: \, (i,j') \not\in \Dc(v) \} .$$
    \begin{proof}
    The connected components of $\Gvwt$ that are not an isolated vertex contain an edge coming from a nonzero coordinate $z_{ij}$. In this case, such coordinates are exactly the coordinates $(i,j) \in \Dc(v)$, since there are no unexpected zeros.
    We claim that every set of coordinates in $\Dc(v)$ that belong to a connected component of $\Gvwt$ has a unique southwest corner. Assume that $(i,j)$ and $(k,l)$ are both southwest corners of the same component of $\Gvwt$. Then, wlog, $i <k$ and $j <l$ and, therefore, $(k,j) \in \Dc(v)$ lies further to the southwest than the other points and its corresponding edge is obviously connecting points in the given connected component of $\Gvwt$. 
    On the other hand, since the sets of coordinates in $\Dc(v)$ belonging to distinct connected components of $\Gvwt$ share no rows or columns, they also do not share south-west corners.
    \end{proof}
\end{lemma}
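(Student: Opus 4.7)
My plan is to exhibit an explicit bijection between $C_v$ and the non-trivial connected components of $\Gvwt$ by transferring the question to the box structure of $\Dc(v)$, and then exploiting a simple closure property.

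First I would translate from $\Gvwt$ back to $\Dc(v)$. Since there are no unexpected zeros, every $(i,j)\in\Dc(v)$ corresponds to a nonzero coordinate $z_{ij}$ and thus contributes the edge $v(j)\to i$ in $\Gvwt$; these account for all the edges. The condition $(i,j)\in\Dc(v)$ forces $v(j)<i$, so every such edge has distinct endpoints and lies in a component with at least two vertices. Conversely, any single-vertex component contains no boxes. Hence the non-trivial components of $\Gvwt$ are in bijection with the nonempty ``box sets'' $D_C\subseteq\Dc(v)$ associated to each component $C$.

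The combinatorial input I need is a closure property of $\Dc(v)$: if $(i,j),(k,l)\in\Dc(v)$ with $i<k$ and $j<l$, then $(k,j)\in\Dc(v)$. This is immediate from the defining inequalities, since $v(j)<i<k$ and $v^{-1}(k)>l>j$. I also need the observation that distinct components of $\Gvwt$ cannot contain boxes sharing a row or a column, because such boxes would produce edges meeting at a common vertex; in particular the sets $D_C$ are pairwise disjoint in their occupied rows and columns, and, using the four edge-incidence types listed in the table of Section~4.1, the boxes inside a fixed $D_C$ group along an antidiagonal, so that any two of them with rows $i<k$ must satisfy columns $j<l$.

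With these two facts in hand, the bijection is straightforward. Given a non-trivial component $C$, suppose for contradiction that $D_C$ has two south-west corners $(i,j)$ and $(k,l)$; relabelling so $i<k$, the antidiagonal grouping forces $j<l$, and then the closure property gives $(k,j)\in\Dc(v)$ strictly south of $(i,j)$ in column $j$, contradicting the south-west corner condition on $(i,j)$. Hence each component has a unique south-west corner of its own box set; by the row/column separation of distinct components, this corner is also a south-west corner of $\Dc(v)$ globally, i.e., an element of $C_v$. Conversely, each $(i,j)\in C_v$ is itself a box in $\Dc(v)$, so it lies in a unique non-trivial component, of which it is necessarily the unique south-west corner. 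The resulting bijection yields the desired equality of cardinalities.

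The main obstacle I expect is rigorously justifying the antidiagonal grouping claim, namely ruling out a pair of boxes in the same component with $i<k$ but $j>l$. This should follow from a careful path-tracing argument in $\Gvwt$ using the four incidence types (same row, same column, and the two antidiagonal crossings across a $1$), showing that such a path must introduce a box sharing a row or column with $(i,j)$ or $(k,l)$; but it is the point where the argument must genuinely invoke the geometry of $\Dc(v)$ rather than purely formal rank manipulations.
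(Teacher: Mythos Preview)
Your approach mirrors the paper's: translate to box sets $D_C\subseteq\Dc(v)$, invoke the closure property, and use row/column disjointness across components. However, your ``antidiagonal grouping'' claim---that any two boxes of a fixed $D_C$ with $i<k$ satisfy $j<l$---is false as stated: any $2\times 2$ block inside $\Dc(v)$ already gives a counterexample. More seriously, the case you then treat is vacuous. By closure alone, two distinct south-west corners $(i,j),(k,l)$ of $\Dc(v)$ with $i<k$ and $j<l$ cannot coexist, since $(k,j)\in\Dc(v)$ would sit strictly below $(i,j)$ in its own column; hence distinct south-west corners always satisfy $i<k\Rightarrow j>l$, and it is precisely this NE--SW configuration that must be shown to force different components. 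Your final paragraph correctly flags this as the obstacle, but the proposed path-tracing does not establish it. The paper's ``wlog $i<k$ and $j<l$'' is equally quick here and tacitly leans on the informal discussion preceding the lemma.

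One clean way to close the gap: the south-west corner conditions at $(p,q)$ give $v(q')>p$ for all $q'<q$ and $v^{-1}(p')<q$ for all $p'>p$; counting forces $p+q=n+1$ and $v(\{1,\ldots,q-1\})=\{p+1,\ldots,n\}$. Consequently every edge $v(j)\to i$ of $\Gvwt$ has both endpoints in $\{1,\ldots,p\}$ (when $j\geq q$) or both in $\{p+1,\ldots,n\}$ (when $j<q$), so each south-west corner induces a separation of $\Gvwt$ into two vertex-disjoint pieces, and distinct south-west corners necessarily lie in distinct components.
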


\begin{lemma}\label{lem: isolated vertices for no unexpected zeros}
    Let $v,w \in S_n$ be such that there are no unexpected zeros in  $\Sigma_v$. Then, the number of isolated vertices of $\Gvwt$ is the same as the cardinality of the set of $1$s on the antidiagonal of $v$ that do not share a row or column with $\Dc(v)$. This set can be written as:
    $$A_v := \{ i \in [n] \mid v(i) = n-i+1 \,\, \& \,\, \forall j \in [n]: \, (j,i), (n-i+1,j) \not\in \Dc(v)\}.$$
    \begin{proof}
        Any such $1$ on the antidiagonal of $v$, say in position $(i,n-i+1)$, not sharing a row or column with $\Dc(v)$ implies that there is no edge in $\Gvwt$ to or from the vertex $i$.
        On the other hand, consider an isolated vertex $i$ of $\Gvwt$. 
        We know that $v$ has a $1$ in position $(i,v^{-1}(i))$. Wlog assume for contradiction that $v^{-1}(i) >i$ and the $1$ is above the antidiagonal. There must then also be a $1$ below the antidiagonal, say in position $(k,l)$, with $k>i$, $l> v^{-1}(i)$. However, then, $(k,v^{-1}(i)) \in \Dc(v)$, which causes an edge $(i \rightarrow k)$ in $\Gvwt$.
    \end{proof}
\end{lemma}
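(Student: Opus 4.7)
The plan is to establish a bijection between isolated vertices of $\Gvwt$ and elements of $A_v$.

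First I would exploit the ``no unexpected zeros'' hypothesis: it guarantees that every $(i,j) \in \Dc(v)$ contributes an edge $v(j)\to i$ in $\Gvwt$. Consequently, a vertex $k \in [n]$ is isolated if and only if row $k$ of $\Dc(v)$ is empty (so no edge enters $k$) and column $v^{-1}(k)$ of $\Dc(v)$ is empty (so no edge leaves $k$). This step is just unwinding Definition~\ref{def: graph Gwv}.

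The key geometric step is to show that these two emptiness conditions force the unique $1$ of $v$ in row $k$, located at $(k, v^{-1}(k))$, to lie on the antidiagonal, i.e.\ $k + v^{-1}(k) = n+1$. Setting $q := v^{-1}(k)$, emptiness of row $k$ of $\Dc(v)$ forces each of the $q-1$ columns strictly west of $q$ to have its $1$ in a row strictly greater than $k$ (it cannot lie in row $k$ itself, since that row's $1$ is already placed at column $q$), giving $q - 1 \le n - k$. Symmetrically, emptiness of column $q$ of $\Dc(v)$ forces each of the $n-k$ rows strictly below row $k$ to have its $1$ in a column strictly less than $q$, giving $n - k \le q - 1$. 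The two inequalities combine to $q = n-k+1$, which is the antidiagonal condition.

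With this in hand the bijection is immediate. Send an isolated vertex $k$ to $i := v^{-1}(k) = n-k+1$; then $v(i) = k = n-i+1$, and the conditions that row $k$ and column $i$ are empty of $\Dc(v)$ are exactly the membership conditions for $i \in A_v$. The inverse map sends $i \in A_v$ to $k := v(i) = n-i+1$, which is isolated by the characterization proved in the first step. Hence the two sets have the same cardinality.

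The main obstacle is the middle counting argument that pins the $1$ in row $k$ of $v$ to the antidiagonal; once one sees how the two emptiness conditions squeeze the $1$s of $v$ into a rectangle of just the right dimensions, the remaining steps are routine. The ``no unexpected zeros'' hypothesis is used only to identify edges of $\Gvwt$ with entries of $\Dc(v)$; without it one would have to argue separately that removing unexpected-zero edges does not create spuriously isolated vertices, as addressed by Remark~\ref{rem: corresp zeros}.
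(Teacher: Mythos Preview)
Your proof is correct and follows essentially the same approach as the paper: both arguments identify isolated vertices via emptiness of the relevant row and column of $\Dc(v)$, and both use a pigeonhole-type argument to pin the corresponding $1$ of $v$ to the antidiagonal. The paper frames the antidiagonal step as a short contradiction (assume the $1$ lies off the antidiagonal, then locate another $1$ forcing an element of $\Dc(v)$ in the relevant row/column), whereas you give the equivalent direct count via the two matched inequalities $q-1 \le n-k$ and $n-k \le q-1$; the content is the same.
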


\noindent We can pose the same statements for the scenario in which there are no zero entries and can also define the dimension and complexity of $\Nvw$ in terms of these sets.

\begin{corollary}\label{cor: conn comp} 
    Let $v,w \in S_n$ be such that there are no (actual) zero coordinates in  $\Sigma_v$. Then there are $|C_v|$ many connected components of $\Gvwtt$ with more than one vertex and $|A_v|$ many isolated vertices of $\Gvwtt$. Thus
    $\dim \sigma_{v,w}= n- |C_v|-|A_v|$ and the complexity of $\Nvw$ is given by
    $\ell(v,w)-n +|C_v|+|A_v|$.
\end{corollary}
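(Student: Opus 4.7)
The plan is to transfer the two preceding lemmas from the setting of $\Gvwt$ under no unexpected zeros to the setting of $\Gvwtt$ under no actual zero coordinates, and then assemble the dimension and complexity formulas.

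First, I would observe that the only property of $\Gvwt$ actually used in the proofs of Lemma~\ref{lem: connected components for no unexpected zeros} and Lemma~\ref{lem: isolated vertices for no unexpected zeros} is that \emph{every} coordinate position $(i,j) \in \Dc(v)$ contributes an edge $v(j) \to i$ of the graph. Under the corollary's hypothesis that $\Zg$ has no actual zero coordinates, every $(i,j) \in \Dc(v)$ corresponds to a nonzero coordinate of $\Zg$ and thus, by definition of $\Gvwtt$, contributes the edge $v(j) \to i$. Therefore the same south-west corner argument counts the non-singleton connected components of $\Gvwtt$ as $|C_v|$, and the same antidiagonal argument shows that the isolated vertices of $\Gvwtt$ are exactly the $1$s of $v$ on the antidiagonal which share neither a row nor a column with $\Dc(v)$, giving $|A_v|$ isolated vertices.

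Next, since $\Gvwtt$ has $n$ vertices and $|C_v| + |A_v|$ connected components in total, Lemma~\ref{lem:weightcondim} immediately yields
$$\dim \sigma_{v,w} \;=\; n - |C_v| - |A_v|,$$
where I use that the edge cone of $\Gvwtt$ coincides with the weight cone $\sigma_{v,w}$, as noted at the beginning of this subsection (passing from $\Gvwtt$ to $\Gvwt$ only removes decomposable edges, which does not change the cone they span). Combining this with the formula $\dim \Nvw = \ell(v,w)$ recalled in Section~\ref{sec: KL varieties}, the complexity is
$$\dim \Nvw - \dim \sigma_{v,w} \;=\; \ell(v,w) - n + |C_v| + |A_v|.$$

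The only subtle step is the transfer of the two lemmas, because their stated hypothesis (no unexpected zeros) is strictly stronger than the one used here (no actual zeros), and $\Gvwt$ may have strictly fewer edges than $\Gvwtt$ due to the removal of decomposable edges. I expect this to be the main point to verify: one must check that removing decomposable edges does not merge connected components, equivalently, that every decomposable edge $v(j)\to i$ already lies in the connected component determined by its endpoints in $\Gvwt$. This is automatic, since a decomposable edge is witnessed by an undirected path in $\Gvwt$ between its endpoints. Once this observation is in place, both counts and hence the dimension and complexity formulas follow exactly as in the no unexpected zeros case.
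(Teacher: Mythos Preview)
Your proof is correct and follows the paper's approach exactly: the arguments of Lemmas~\ref{lem: connected components for no unexpected zeros} and~\ref{lem: isolated vertices for no unexpected zeros} use only that every $(i,j)\in\Dc(v)$ contributes an edge, which holds for $\Gvwtt$ under the hypothesis of no actual zeros, and the dimension and complexity formulas then follow from Lemma~\ref{lem:weightcondim} together with the fact that the edge cone of $\Gvwtt$ is $\sigma_{v,w}$. Your final paragraph's concern about comparing $\Gvwt$ with $\Gvwtt$ is superfluous here (and the word ``merge'' should be ``split''), since the corollary is argued entirely in terms of $\Gvwtt$ and never requires relating its connected components back to those of $\Gvwt$.
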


\begin{example}
    Consider $v=58672341$ and $w=12345678$. One can check that $w$ causes no (actual) zero entries in $\Zg$. The opposite Rothe diagram $\Dc(v)$ is shown below. The two components of $\Dc(v)$ that are in the outlined squares make up the edges of the two connected components of $\Gvwtt$ with more than one vertex. Their south-west corners are marked with $\times$. The $1$ on the antidiagonal in the upper right outlined square corresponds to the isolated vertex $1$. \\
 
   \begin{minipage}{0.4\textwidth}
  \begin{center}
  \hspace{0.5cm}
  \begin{tikzpicture}[scale=0.5]
  \draw[step=1.0,black,thin] (0,0) grid (8,8);

  \draw[line width = 0.7mm, draw=Gray, draw opacity = 0.4]
  (0.5, 4) -- (0.5,8);
  \draw[line width = 0.7mm, draw=Gray, draw opacity = 0.4]
  (1.5, 1) -- (1.5,8);
  \draw[line width = 0.7mm, draw=Gray, draw opacity = 0.4]
  (2.5, 3) -- (2.5,8);
  \draw[line width = 0.7mm, draw=Gray, draw opacity = 0.4]
  (3.5, 2) -- (3.5,8);
  \draw[line width = 0.7mm, draw=Gray, draw opacity = 0.4]
  (4.5, 7) -- (4.5,8);
  \draw[line width = 0.7mm, draw=Gray, draw opacity = 0.4]
  (5.5, 6) -- (5.5,8);
  \draw[line width = 0.7mm, draw=Gray, draw opacity = 0.4]
  (6.5, 5) -- (6.5,8);  

  \draw[line width = 0.7mm, draw=Gray, draw opacity = 0.4]
  (2, 0.5) -- (8,0.5);
  \draw[line width = 0.7mm, draw=Gray, draw opacity = 0.4]
  (4, 1.5) -- (8,1.5);
  \draw[line width = 0.7mm, draw=Gray, draw opacity = 0.4]
  (3, 2.5) -- (8,2.5);
  \draw[line width = 0.7mm, draw=Gray, draw opacity = 0.4]
  (1, 3.5) -- (8,3.5);
  \draw[line width = 0.7mm, draw=Gray, draw opacity = 0.4]
  (7, 4.5) -- (8,4.5);
  \draw[line width = 0.7mm, draw=Gray, draw opacity = 0.4]
  (6, 5.5) -- (8,5.5);
  \draw[line width = 0.7mm, draw=Gray, draw opacity = 0.4]
  (5, 6.5) -- (8,6.5);
  
  \draw [line width = 0.0mm, draw=CornflowerBlue, fill=CornflowerBlue, draw opacity = 0.5, fill opacity=0.5]
       (0,0) -- (0,3) -- (1,3) -- (1,0) -- cycle;

    \draw [line width = 0.0mm, draw=CornflowerBlue, fill=CornflowerBlue, draw opacity = 0.5, fill opacity=0.5]
       (2,1) -- (3,1) -- (3,2) -- (2,2) -- cycle;

    \draw [line width = 0.0mm, draw=CornflowerBlue, fill=CornflowerBlue, draw opacity = 0.5, fill opacity=0.5]
       (4,4) -- (6,4) -- (6,5) -- (5,5) -- (5,6) -- (4,6) -- cycle;

   \draw[line width = 0.5mm] (4,0) -- (4,7);
   \draw[line width = 0.5mm] (0,4) -- (7,4);
   \draw[line width = 0.5mm] (7,8) -- (7,4);
   \draw[line width = 0.5mm] (8,7) -- (4,7);

   \node at (0.5,3.5) {{$1$}}; 
   \node at (1.5,0.5) {{$1$}}; 
   \node at (2.5,2.5) {{$1$}}; 
   \node at (3.5,1.5) {{$1$}};
   \node at (4.5,6.5) {{$1$}};
   \node at (5.5,5.5) {{$1$}};
   \node at (6.5,4.5) {{$1$}};
   \node at (7.5,7.5) {{$1$}};

   \node at (0.5,0.5) {\large{$\times$}}; 
   \node at (4.5,4.5) {\large{$\times$}};    
   
\end{tikzpicture}
\end{center}
\end{minipage}
\begin{minipage}{0.6\textwidth}
\begin{center}
\hspace{-1cm}
\begin{tikzpicture}[->,>=stealth',auto,node distance=1cm]
    \node[] (G) {$\Gvwtt:$};
    \node[shape=circle,draw=black] (1) [right of=G]{1};
    \node[shape=circle,draw=black] (2) [right of=1] {2};
    \node[shape=circle,draw=black] (3) [right of=2] {3};
    \node[shape=circle,draw=black] (4) [right of=3] {4};
    \node[shape=circle,draw=black] (5) [right of=4] {5};
    \node[shape=circle,draw=black] (6) [right of=5] {6} ;
     \node[shape=circle,draw=black] (7) [right of=6] {7};
    \node[shape=circle,draw=black] (8) [right of=7] {8} ;

    \path[every node/.style={font=\sffamily\small}]
    (2) edge node [right] {} (3)
    (3) edge node [right] {} (4)
    (5) edge node [right] {} (6)
    (6) edge node [right] {} (7);
    \draw[->] (2) to[out=45, in=135, looseness=1] (4);
    \draw[->] (5) to[out=45, in=135, looseness=1] (7);
    \draw[->] (5) to[out=45, in=135, looseness=1] (8);
  
\end{tikzpicture}
\end{center}
\end{minipage}

\end{example}

\subsubsection{Cycles of $\Gvwt$}\label{subsec: cycles of Gvw}
\noindent We now consider another approach to study the complexity of $\Nvw$; and the cycles of the graph $\Gvwt$. From now on, when talking about a cycle of a directed graph, we mean a cycle in the underlying undirected graph. We can then rewrite:
\begin{align*}
\dim \Nvw &= |\Dc(v)| - |\Dc(w)| \\
&= \# (\text{nonzero } z_{ij} ) + \# (\text{unexpected zeros}) - |\Dc(w)|.\\
\dim \sigma_{v,w} &= |V(\Gvwt)| - \#(\text{connected components of } \Gvwt) \\
&= |E(\Gvwt)| - \cn( \Gvwt )\\
& = \# (\text{nonzero } z_{ij} )- \cn( \Gvwt),
\end{align*}
where, $\nu$ refers to the cyclomatic number as defined in Definition~\ref{defn:cycnumb}.
The complexity $\Nvw$ is given by
$$ \cn (\Gvwt) - |\Dc(w)| + \# (\text{unexpected zeros}).$$

Therefore, let us now take a closer look at the structure of cycles in $\Dc(v)$. To obtain a cycle, a path in $\Zg$ must begin and end at the same coordinate. Steps that are not allowed within a cycle are to move within the same row or column of $\Zg$ twice (including crossing over a $1$ from that row or column) or to cross over the same $1$ twice. Namely, every combination of steps except for the ones containing the following eight combinations that starts and ends in the same coordinate makes a cycle. 

\begin{align*}
z_{i_1 j_1} \stackrel{r}{\dash} z_{i_2 j_2} \stackrel{r}{\dash}  z_{i_3 j_3} \hspace{1.7cm}&
z_{i_1 j_1} \stackrel{r}{\dash} z_{i_2 j_2} \stackrel{_{\searrow}}{\dash}  z_{i_3 j_3} \\
z_{i_1 j_1} \stackrel{c}{\dash} z_{i_2 j_2} \stackrel{c}{\dash}  z_{i_3 j_3} \hspace{1.7cm} &
z_{i_1 j_1} \stackrel{_{\nwarrow}}{\dash} z_{i_2 j_2} \stackrel{r}{\dash}  z_{i_3 j_3}\\
z_{i_1 j_1} \stackrel{_{\nwarrow}}{\dash} z_{i_2 j_2} \stackrel{_{\searrow}}{\dash}  z_{i_3 j_3} \hspace{1.7cm} & z_{i_1 j_1} \stackrel{_{\swarrow}}{\dash} z_{i_2 j_2} \stackrel{c}{\dash}  z_{i_3 j_3} \\
z_{i_1 j_1} \stackrel{_{\searrow}}{\dash} z_{i_2 j_2} \stackrel{_{\nwarrow}}{\dash}  z_{i_3 j_3} \hspace{1.7cm} & 
z_{i_1 j_1} \stackrel{c}{\dash} z_{i_2 j_2} \stackrel{_{\nwarrow}}{\dash}  z_{i_3 j_3} 
\end{align*}
Of course, these combinations of steps are also not allowed to appear at the beginning and end of the path.
In addition to studying the complexity of the KL variety, following \cite[Proposition 4.3]{gitler2010ring}, if $\Nvw$ is toric, then the cycles determine a set of binomial generators of the defining ideal.

\begin{example}\label{ex: paths and cycles}
Consider $v=324651$ and $w=563421$. The opposite Rothe diagram $\Dc(v)$ and the graph $\Gvwt$ are shown below. Then, for example, the path
$z_{41} \stackrel{c}{\dash} z_{51} 
\stackrel{r}{\dash} z_{53}
\stackrel{_{\nwarrow}}{\dash} z_{41}$, which is highlighted green in $\Gvwt$, is a cycle, but the path
$z_{42} \stackrel{c}{\dash} z_{52} 
\stackrel{c}{\dash} z_{62}
\stackrel{r}{\dash} z_{63}
\stackrel{_{\nwarrow}}{\dash} z_{42}$, which is highlighted blue, is not. \\

\begin{minipage}{0.35\textwidth}
\begin{center}
\hspace{2cm}
\begin{tikzpicture}[scale = 0.5]
  \draw[step=1.0,black,thin] (0,0) grid (6,6);
 
    \node at (0.5,3.5) {{$1$}}; 
   \node at (1.5,4.5) {{$1$}}; 
   \node at (2.5,2.5) {{$1$}}; 
   \node at (3.5,0.5) {{$1$}};
   \node at (4.5,1.5) {{$1$}};
   \node at (5.5,5.5) {{$1$}};

  \draw [line width = 0.0mm, draw=CornflowerBlue, fill=CornflowerBlue, draw opacity = 0.3, fill opacity=0.3]
       (0,0) -- (0,3) -- (2,3) -- (2,2) -- (3,2) -- (3,0) -- cycle;

  \draw[line width = 0.7mm, draw=Gray, draw opacity = 0.4]
  (0.5, 4) -- (0.5,6);
  \draw[line width = 0.7mm, draw=Gray, draw opacity = 0.4]
  (1.5, 5) -- (1.5,6);
  \draw[line width = 0.7mm, draw=Gray, draw opacity = 0.4]
  (2.5, 3) -- (2.5,6);
  \draw[line width = 0.7mm, draw=Gray, draw opacity = 0.4]
  (3.5, 1) -- (3.5,6);
  \draw[line width = 0.7mm, draw=Gray, draw opacity = 0.4]
  (4.5, 2) -- (4.5,6);
  \draw[line width = 0.7mm, draw=Gray, draw opacity = 0.4]
  (5.5, 6) -- (5.5,6);
  \draw[line width = 0.7mm, draw=Gray, draw opacity = 0.4]
  (4, 0.5) -- (6,0.5);
  \draw[line width = 0.7mm, draw=Gray, draw opacity = 0.4]
  (5, 1.5) -- (6,1.5);
  \draw[line width = 0.7mm, draw=Gray, draw opacity = 0.4]
  (3, 2.5) -- (6,2.5);
  \draw[line width = 0.7mm, draw=Gray, draw opacity = 0.4]
  (1, 3.5) -- (6,3.5);
  \draw[line width = 0.7mm, draw=Gray, draw opacity = 0.4]
  (2, 4.5) -- (6,4.5);
  \draw[line width = 0.7mm, draw=Gray, draw opacity = 0.4]
  (6, 5.5) -- (6,5.5);

    \node at (0.5,2.5) {\small \textcolor{Blue}{$z_{41}$}}; 
    \node at (0.5,1.5) {\small \textcolor{Blue}{$z_{51}$}};
    \node at (0.5,0.5) {\small \textcolor{Blue}{$0$}}; 
    \node at (1.5,2.5) {\small \textcolor{Blue}{$z_{42}$}}; 
    \node at (1.5,1.5) {\small \textcolor{Blue}{$z_{52}$}};
    \node at (1.5,0.5) {\small \textcolor{Blue}{$z_{62}$}}; 
    \node at (2.5,0.5) {\small \textcolor{Blue}{$z_{63}$}}; 
    \node at (2.5,1.5) {\small \textcolor{Blue}{$z_{53}$}};
 
\end{tikzpicture}
\end{center}
\end{minipage}
\begin{minipage}{0.6\textwidth}
\begin{center}
\hspace{-1.5cm}
\begin{tikzpicture}[->,>=stealth',auto,node distance=1cm]
    \node (G) {$\Gvwt:$};
    \node[shape=circle,draw=black] (1) [right of =G] {1};
    \node[shape=circle,draw=black] (2) [right of=1] {2};
    \node[shape=circle,draw=black] (3) [right of=2] {3};
    \node[shape=circle,draw=black] (4) [right of=3] {4};
    \node[shape=circle,draw=black] (5) [right of=4] {5};
    \node[shape=circle,draw=black] (6) [right of=5] {6} ;

    \path[every node/.style={font=\sffamily\small}];
    \draw (3) edge[Green] (4);
    \draw (4) edge[Green] (5);
    \draw[->, Green] (3) to[out=45, in=135, looseness=1] (5);
    \draw[->, blue] (4) to[out=45, in=135, looseness=1] (6);
    \draw[->, blue] (2) to[out=-45, in=225, looseness=1] (5);
    \draw[->, blue] (2) to[out=-45, in=225, looseness=1] (6);
    \draw[->, blue] (2) to[out=-45, in=225, looseness=1] (4);

\end{tikzpicture}
\end{center}
\end{minipage}

\end{example}

\noindent If there are no unexpected zeros in $\Sigma_v$, 
one knows, a priori, which coordinates $z_{ij}$ appear in $\Zg$ that can contribute to a cycle in $\Gvwt$. Hence, we are able to identify cycles just by looking at the opposite Rothe diagram.

\begin{lemma}\label{contains cycle}
Let $v,w \in S_n$ be such that there are no unexpected zeros in  $\Sigma_v$. Then $\Gvwt$ contains a cycle if and only if there are $(i,j),(k,l) \in D^{\circ}(v)$ such that $i<k$ and $j<l$.  

\begin{proof}

    For the forward implication, assume, for contradiction, that $\Dc(v)$ contains no points $(i,j),(k,l)$ with $i<k,j<l$. Without these, we do not obtain a cycle since the only combinations of steps that a path in $\Zg$ can consist of are $z_{i_1 j_1} \stackrel{r}{\dash} z_{i_2 j_2} \stackrel{c}{\dash}  z_{i_3 j_3}$, where either $j_2 < j_1,i_3 > i_2$ or $j_2>j_1, i_3<i_2$.

    Now for the backward implication, assume that there exist $z_{ij},z_{kl} $ in $\Zg$ such that $i<k$ and $j<l$ of minimal distance. Since there are no unexpected zeros, the matrix $\Zg$ has the nonzero coordinate $z_{kj}$ at position $(k,j)$. Assume for contradiction that $\Zg$ has a zero in position $(i,l)$. By the construction of $\Dc(v)$, this implies a $1$ in $\Zg$ either in position $(q,l)$, where $i<q<k$, or in position $(i,p)$, where $j<p<l$. Wlog, we can assume that there exists a $1$ in such a position $(q,l)$. Again by construction of $\Dc(v)$, this implies that $(q,j)\in \Dc(v)$, which contradicts the minimality of the distance between $(i,j)$ and $(k,l)$. We conclude that $\Zg$ either has a $1$ or the coordinate $z_{il}$ in position $(i,l)$. In the first case, 
    $z_{ij} \stackrel{c}{\dash} z_{kj} \stackrel{r}{\dash} z_{kl}\stackrel{_{\nwarrow}}{\dash} z_{ij}$ is a cycle, in the second case, 
    $z_{ij} \stackrel{c}{\dash} z_{kj} \stackrel{r}{\dash} z_{kl}\stackrel{c}{\dash} z_{il} \stackrel{r}{\dash} z_{ij}$ is.   
\end{proof}
\end{lemma}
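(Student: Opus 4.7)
The plan is to prove both directions via the correspondence between the nonzero entries of $\Zg$ and the cells of $\Dc(v)$: since there are no unexpected zeros, the free coordinates of $\Zg$ are precisely the cells of $\Dc(v)$, so questions about paths in $\Gvwt$ translate into combinatorial questions about paths through $\Dc(v)$ that may cross over the ones of $v$.

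For the forward direction I would argue by contrapositive and assume that no two elements of $\Dc(v)$ stand in strict SE-NW relation. The key first observation is that a move of type $\stackrel{_{\nwarrow}}{\dash}$ or $\stackrel{_{\searrow}}{\dash}$ between cells $z_{ij}$ and $z_{kl}$ requires a $1$ of $v$ satisfying $v(j)=k$ or $v(l)=i$; combined with the defining inequalities of $\Dc(v)$ this forces $(i,j)$ and $(k,l)$ into exactly the forbidden SE-NW position. Consequently only $\stackrel{r}{\dash}$ and $\stackrel{c}{\dash}$ moves survive, and since the forbidden list prohibits two consecutive moves of the same type, any path must strictly alternate them. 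A short geometric check then shows that every such alternating pair of moves is monotone in one of the two diagonal directions, with the direction pinned down once and for all by the antichain hypothesis. A monotone path cannot close, so $\Gvwt$ has no cycle.

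For the backward direction I would start with a pair $(i,j),(k,l)\in\Dc(v)$ satisfying $i<k$, $j<l$ and minimising the distance $(k-i)+(l-j)$. A direct verification of the defining inequalities of $\Dc(v)$ yields $(k,j)\in\Dc(v)$. The decisive step is a case analysis at position $(i,l)$: if it were a structural zero of $\Sigma_v$, then either a $1$ would lie in column $l$ at some row strictly between $i$ and $k$, or a $1$ would lie in row $i$ at some column strictly between $j$ and $l$, and in either case one can produce a closer SE-NW pair inside $\Dc(v)$ (for instance $(v(l),j)$ paired with $(k,l)$, or $(i,j)$ paired with $(k,v^{-1}(i))$), contradicting minimality. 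Thus $(i,l)$ is either a $1$ of $v$ or a free coordinate $z_{il}$, and accordingly the loops $z_{ij}\stackrel{c}{\dash} z_{kj}\stackrel{r}{\dash} z_{kl}\stackrel{_{\nwarrow}}{\dash} z_{ij}$ (in the former case) or $z_{ij}\stackrel{c}{\dash} z_{kj}\stackrel{r}{\dash} z_{kl}\stackrel{c}{\dash} z_{il}\stackrel{r}{\dash} z_{ij}$ (in the latter) exhibit an explicit cycle in $\Gvwt$.

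The main obstacle I anticipate is the bookkeeping in the backward direction: confirming that the auxiliary witnesses extracted from a structural zero at $(i,l)$ genuinely belong to $\Dc(v)$ and genuinely shorten $(k-i)+(l-j)$. Each of these verifications reduces to a short chase through the inequalities $v(\cdot)<\cdot$ and $v^{-1}(\cdot)>\cdot$ defining the opposite Rothe diagram, but one has to be careful to exhaust the two symmetric subcases without double-counting.
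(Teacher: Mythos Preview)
Your proposal is correct and follows essentially the same approach as the paper. The forward direction in the paper is stated more tersely (it asserts that only alternating $r$--$c$ moves with the stated monotone directions survive), whereas you spell out why the diagonal moves $\stackrel{_{\nwarrow}}{\dash}$ and $\stackrel{_{\searrow}}{\dash}$ are ruled out under the antichain hypothesis and why the direction stays pinned; the backward direction is identical to the paper's, with the paper treating one of the two subcases ``wlog'' while you write out both explicit witnesses $(v(l),j)$ and $(k,v^{-1}(i))$.
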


It is natural to ask if we can also deduce the cyclomatic number in a similar way, namely by counting the pairs of coordinates as in Lemma \ref{contains cycle} that are of a minimal distance. 
For $(i,j),(k,l)\in \Dc(v)$ with $i<k,j<l$ define the distance 
$d((i,j),(k,l)) := (k-i) + (l-j)$.
Define a set $P_v \subset \Dc(v) \times \Dc(v)$ by adding elements recursively as follows:
\begin{itemize}
\item  $((i,j),(i+1,j+1)) \in P_v$ if $(i,j),(i+1,j+1) \in \Dc(v)$ 
\item $((i,j),(k,l)) \in P_v$ where $d((i,j),(k,l)) = d$ if $(i,j),(k,l) \in \Dc(v)$, $i<k, j<l$ and there is not already $((i,j),(k',l'))\in P$ with $d((i,j),(k',l')) < d$ or $((i',j'),(k,l))\in P_v$ with $d((i',j'),(k,l)) <d$.
\end{itemize}
We first show that this recursion is well-defined. Let $(i,j),(i',j'),(k,l) \in \Dc(v)$ with $i<i'<k$, $j' < j < l$ and $d((i,j),(k,l)) = d((i',j'),(k,l)) = d$. Then neither $((i,j),(k,l))$ nor $((i',j'),(k,l))$ are in $P_v$. This is due to the fact that if $(i',j)\in \Dc(v)$, then $d((i',j),(k,l)) < d$. Otherwise, there exists $j' <q<j$ with $v(q) = i'$. However, then, $(k,q)\in \Dc(v)$ and $d((i',j'),(k,q)) < d$. Similar arguments hold for $(i,j),(k,l),(k',l')$ with $i<k<k'$, $j<l'<l$ and $d((i,j),(k,l))=d((i,j),(k',l'))$.

\begin{theorem} \label{thm: cyclomatic number from pairs}
Let $v,w \in S_n$ be such that there are no unexpected zeros in  $\Sigma_v$. Then $ \cn(\Gvwt) = |P_v|$.
\begin{proof}
    Since the cyclomatic number of a graph is the sum of the cyclomatic numbers of its connected components, consider a single fixed connected component $G$ of $\Gvwt$ with more than one vertex. The edges in $G$ correspond to elements in $\Dc(v)$ that lie in a minimal subset $D$ of $\Dc(v)$ and in a certain minimal set of rows $R$ and minimal set of columns $C$. 
    It is important to note that the connected components of $D$ are {Young diagrams} in French notation and that within the subset of rows in $R$ that a connected component lies in, the further left a column, the ``higher'' it is in $D$. More precisely, if $(i,j)\not\in D$, then there also exists no $(i,k)\in D$ with $k>j$. 
    Define 
    \begin{align*}
        M & := \{ (k,l) \in D \mid \exists (i,j)\in D: ((i,j),(k,l))\in P_v \} ,\\
        L & := \{ (k,l) \in D \mid \forall j < l: (k,j) \not\in D \}, \\
        S & := \{ (k,l) \in D \mid v(l) \not\in R \text{ and } k \text{ minimal s.t. } \exists j<l: (k,j) \in D \}.
    \end{align*}
    The set $M$ describes the lower right elements of pairs in $P_v$, the set $L$ describes the leftmost elements of $D$ of row $k$ and the set $S$ describes the highest points in $D$ that are not the leftmost element in their row for which the $1$ in $Z^{(v)}$ is not in a row in $R$.
    Notice that $L \cap S = \emptyset$ and that $|R| = |L|$. Also, we have that  
    \begin{align*}
    |V(G)| = |R| + |C| - |\{ j \in C \mid v(j) \in R \} 
     = |L| + |S| + 1,    
    \end{align*}    
    Here, the $+1$ comes from the leftmost column $j \in C$ that has $v(j) \not\in R$ but contains no element in $S$.
    We will show that $D \setminus M = L \cup S$. Then,
    \begin{align*}
    |P_v| = |M| = |D| -|L| - |S| 
        = |E(G)| - |V(G)| +1 
         = \cn(G).
    \end{align*} 
\noindent    
\underline{$L \cup S \subset D \setminus M$}: It is clear that $L \subset D \setminus M$. Let $(k,l) \in S$. First of all we notice that then $v(l) >i$ for all $(i,j) \in D$ with $j <l$, since otherwise, if $v(l)<i$, then $(v(l),j)\in D$, contradicting $v(l) \not\in R$. So now we need to show that $(k,l) \not\in M$. If there exists no $(i,j)\in D$ with $i<k$ and $j<l$, then we are done. Assume that it does exist and consider such an $(i,j)$. 
If there exists $q \in C$ with $q < l$, such that $v(q) = i$, then $(k,q) \in D$, $i<k$, $j <q$, and $d((i,j),(k,q)) < d((i,j),(k,l))$. However, if there exists no such $q \in C$, then, since $v(l)>i$, we have that $(i,l)\in D$, which contradicts $(k,l) \in S$.

\noindent
\underline{$D \setminus (L \cup S) \subset M$}: Let $(k,l) \in D \setminus (L \cup S)$. Then, there are two cases that we can differentiate into. \\
\textbf{1}: $v(l) \in R$. We may assume that $k$ is minimal, as otherwise $((k-1,l-1),(k,l))\in M$. Since $v(l)=i$ for some $i \in R$, there also exists $j <l$ with $(i,j)\in D$. Take such a $j$ to be maximal w.r.t. this property. 
We show that for any element in $D$ in between $(i,j)$ and $(k,l)$, we can find another element in $D$ that is north-west of it and closer than $(i,j)$. Then, at the very least, $((i,j),(k,l))\in P_v$.
Suppose that there exist $i<p\leq k$, $j <q \leq l$ with $(p,q)\in D$, such that $d((i,j),(p,q)) < d((i,j),(k,l))$. We have that $q \neq l$, by minimality of $k$; and $p-1 \neq i$, by maximality of $j$ and the fact that $v(l) = i$. We may assume $p$ to be minimal: It is $(p,j)\in D$, so for any non-minimal $p'$, it is $d((p,j),(p',q))<d((i,j),(p',q))$. If $v(q)>i$, then $(i,q)\in D$, contradicting the maximality of $j$. However, if $v(q) <i$, then $(v(q),j)\in D$ and $d((v(q),j),(p,q)) < d((i,j),(p,q))$. \\
\textbf{2}: It is $v(l) \not\in R$ and there exists $i<k$ such that $(i,l)\in D$ and such that there exists $j <l$ with $(i,j)\in D$. Choose $i$ and $j$ maximal with this property. 
\end{proof}
\end{theorem}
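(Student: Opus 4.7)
The plan is to prove the identity one connected component at a time. Isolated vertices contribute nothing to $\cn(\Gvwt)$ and correspond to no pairs in $P_v$, so it suffices to fix a non-trivial connected component $G$ of $\Gvwt$, let $D \subseteq \Dc(v)$ be the subset of diagram entries whose associated edges lie in $G$, and show that $\cn(G) = |\{((i,j),(k,l)) \in P_v : (i,j),(k,l) \in D\}|$. Summing over components then gives the theorem.

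For a fixed $G$, let $R$ be the set of rows and $C$ the set of columns met by $D$. Since each edge of $G$ comes from a unique coordinate in $D$, we have $|E(G)| = |D|$, and since $V(G) = R \cup v(C)$,
\[
|V(G)| = |R| + |\{j \in C : v(j) \notin R\}|.
\]
Using $\cn(G) = |E(G)| - |V(G)| + 1$, the target reduces to the partition identity $D = L \sqcup S \sqcup M$ together with the count $|L| = |R|$ and $|S| + 1 = |\{j \in C : v(j) \notin R\}|$. Both counts follow from the French-Young-diagram structure of the connected pieces of $D$ (a consequence of the opposite Rothe diagram construction): $L$ picks out one cell per row, while every column $j \in C$ with $v(j) \notin R$ except the leftmost such column contributes a unique cell to $S$. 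Disjointness $L \cap S = \emptyset$ is immediate from the definitions.

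The partition itself is shown by double inclusion. For $L \cup S \subseteq D \setminus M$, elements of $L$ have no diagram entry strictly west in the same row, so they cannot be the lower-right of any pair; for $(k,l) \in S$ the condition $v(l) \notin R$ forces that whenever $(i,j) \in D$ with $i<k$, $j<l$, one must have $(i,l) \in D$ too (else a smaller $1$ of $v$ in column $l$ lies in $R$), and then either $(i,l)$ or a cell derived from it provides a strictly closer candidate, precluding $(k,l) \in M$. The substantive direction is $D \setminus (L \cup S) \subseteq M$: given such $(k,l)$, I produce an $(i,j) \in D$ with $((i,j),(k,l)) \in P_v$. When $v(l) \in R$, take $i = v(l)$ and $j$ maximal with $(i,j) \in D$; when $v(l) \notin R$, the failure of $(k,l)$ to lie in $S$ yields some $(i,l) \in D$ with $i<k$, and one picks $j$ maximal with $(i,j) \in D$ from this $i$.

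The main obstacle is verifying, in both cases, that no intermediate $(p,q) \in D$ with $i < p \leq k$, $j < q \leq l$ is strictly closer to $(k,l)$ in the distance $d$, because only then is $((i,j),(k,l))$ actually in $P_v$. The plan here is a replacement argument: any such competitor $(p,q)$ either forces a new diagram entry (via $v(q)$ being in or out of $R$) that contradicts the maximality of $j$ or the minimality implicit in the choice of $k$, or it yields a strictly closer pair $((v(q),j),(p,q))$ or $((p,j),(p,q))$, which by the recursive definition of $P_v$ still places $(p,q)$ in the image and allows us to propagate down to $(k,l)$. The case $v(l) \notin R$ is slightly harder because one lacks the direct anchor $i = v(l)$, but using the minimality of $k$ built into the definition of $S$ and the Young structure of $D$, the same replacement argument pushes through. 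Once the partition $D = L \sqcup S \sqcup M$ is verified, the arithmetic $|M| = |D| - |L| - |S| = |E(G)| - |V(G)| + 1 = \cn(G)$ closes the component-wise statement, and summing over components yields $\cn(\Gvwt) = |P_v|$.
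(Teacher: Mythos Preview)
Your plan follows essentially the same route as the paper's own proof: work one non-trivial connected component $G$ at a time, identify the corresponding block $D\subseteq\Dc(v)$ with its row and column sets $R,C$, introduce the same three auxiliary sets $L$, $S$, $M$, verify the vertex count $|V(G)|=|L|+|S|+1$, and reduce the cyclomatic identity to the partition $D=L\sqcup S\sqcup M$, proved by double inclusion with the same case split on whether $v(l)\in R$. The replacement arguments you outline for both inclusions mirror those in the paper (including the use of $v(q)=i$ to manufacture a closer cell $(k,q)$ and the maximality-of-$j$ contradiction), and your Case~2 sketch is at the same level of detail as the paper's, which likewise only sets up the choice of $(i,j)$ and leaves the analogous verification implicit.
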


\noindent The proof works the same way if instead of considering  unexpected zeros we considered actual zeros. 

\begin{proposition} 
Let $v,w \in S_n$ be such that there are no (actual) zeros in  $\Sigma_v$. Then $ \cn(\Gvwtt) = |P_v|$.
\end{proposition}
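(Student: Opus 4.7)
The plan is to adapt the proof of Theorem~\ref{thm: cyclomatic number from pairs} essentially verbatim. The key observation is that under the hypothesis that there are no actual zeros in $\Sigma_v$, every free variable $z_{ij}$ with $(i,j) \in \Dc(v)$ is nonzero in $\Zg$, so by the definition of $\Gvwtt$ its edge set becomes
$$E(\Gvwtt) = \{(v(j) \to i) \mid (i,j) \in \Dc(v)\}.$$
This is precisely the combinatorial graph that appears in the proof of Theorem~\ref{thm: cyclomatic number from pairs} under the stronger hypothesis of no unexpected zeros. In particular, $\Gvwtt$ here coincides with what $\Gvwt$ would be in the no-unexpected-zeros setting, and both $P_v$ and the cyclomatic number depend only on this shared combinatorial data.

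Concretely, I would decompose $\Gvwtt$ into connected components and handle one component $G$ at a time, mirroring the earlier argument. For each $G$ with more than one vertex, I would isolate the minimal subsets $D \subseteq \Dc(v)$, $R, C \subseteq [n]$ of coordinates, rows, and columns producing $G$, and define $M, L, S \subseteq D$ with the same meanings as before: $M$ collects the lower-right members of pairs in $P_v$ supported in the component, $L$ collects the leftmost element of $D$ in each row of $R$, and $S$ collects the topmost elements of $D$ that are not leftmost in their row whose column $l$ satisfies $v(l) \notin R$.

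The counting identity $|V(G)| = |L| + |S| + 1$ is a statement purely about the French-notation Young-diagram shape of the connected components of $D$, and transfers unchanged. The core step, the set equality $D \setminus M = L \cup S$, also transfers: the inclusion $S \subseteq D \setminus M$ rests on the observation that for $(k,l) \in S$ and any $(i,j) \in D$ with $j < l$ one has $v(l) > i$, while the reverse inclusion is verified by the same case split on whether $v(l) \in R$, each case producing an appropriate witness pair in $P_v$. Summing $|M| = |E(G)| - |V(G)| + 1 = \cn(G)$ over components then yields $\cn(\Gvwtt) = |P_v|$.

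The main obstacle is simply to audit the earlier proof and confirm that every appeal to the no-unexpected-zeros hypothesis was only used to guarantee that a specific entry $z_{kj}$ with $(k,j) \in \Dc(v)$ is genuinely nonzero --- a conclusion that remains valid under the present weaker hypothesis. Since the definition and basic properties of $P_v$ depend only on $\Dc(v)$, no modification of its setup is required.
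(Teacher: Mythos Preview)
Your proposal is correct and matches the paper's own argument, which simply notes that the proof of Theorem~\ref{thm: cyclomatic number from pairs} works verbatim when ``no unexpected zeros'' is replaced by ``no actual zeros''. Your audit is exactly the point: under either hypothesis the edge set of the relevant graph is $\{(v(j)\to i)\mid (i,j)\in \Dc(v)\}$, so the combinatorial counting on $\Dc(v)$ carries over unchanged.
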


\noindent For both cases, we may use the information about cycles in the corresponding graph to deduce the complexity of the KL variety under the usual torus action.

\begin{corollary}\label{cor: toric unexp zeros}
   Let $v,w \in S_n$ be such that there are no (actual or unexpected) zeros in  $\Sigma_v$. Then 
   the complexity of $\Nvw$ is given by $|P_v| - |D^{\circ}(w)|$ and, in particular,
   $\Nvw$ is toric if and only if $|P_v| = |D^{\circ}(w)|$. 
\end{corollary}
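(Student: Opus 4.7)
The plan is to combine the formula for the complexity expressed in terms of the cyclomatic number (derived just before Example~\ref{ex: paths and cycles}) with Theorem~\ref{thm: cyclomatic number from pairs} (or the analogous Proposition for the case without actual zeros). Recall that the earlier decomposition gave
\begin{align*}
\dim \Nvw &= \#(\text{nonzero } z_{ij}) + \#(\text{unexpected zeros}) - |\Dc(w)|,\\
\dim \sigma_{v,w} &= \#(\text{nonzero } z_{ij}) - \cn(\Gvwt),
\end{align*}
so that the complexity equals $\cn(\Gvwt) - |\Dc(w)| + \#(\text{unexpected zeros})$.

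Under the hypothesis that there are no actual or unexpected zeros in $\Sigma_v$, the last term vanishes, and moreover the graphs $\Gvwt$ and $\Gvwtt$ coincide. By Theorem~\ref{thm: cyclomatic number from pairs}, $\cn(\Gvwt) = |P_v|$, so the complexity becomes $|P_v| - |\Dc(w)|$. The toric case follows immediately since complexity zero is equivalent to $|P_v| = |\Dc(w)|$.

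There is essentially no obstacle here, as everything reduces to substitution into formulas already established. The only thing worth spelling out briefly is that ``no actual zeros'' is a stronger hypothesis than ``no unexpected zeros,'' so one has to be slightly careful about which version of Theorem~\ref{thm: cyclomatic number from pairs} is being invoked; but since the statement allows either assumption and both versions of the theorem give $\cn = |P_v|$, the conclusion is the same in either case.
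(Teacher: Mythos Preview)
Your proof is correct and follows exactly the approach implicit in the paper (the corollary is stated without proof there, precisely because it is an immediate substitution of Theorem~\ref{thm: cyclomatic number from pairs} and the subsequent Proposition into the complexity formula $\cn(\Gvwt) - |\Dc(w)| + \#(\text{unexpected zeros})$). One small slip: the implication goes the other way around---``no unexpected zeros'' is the \emph{stronger} hypothesis, since every actual zero is an unexpected zero (cf.\ the discussion after Definition~\ref{def: unexpected zero}); but as you note, both versions of the cyclomatic-number result apply and yield the same conclusion, so this does not affect the argument.
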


\begin{example}
Consider $v= 423516$ and $w=642315$. Then, $\Sigma_v$ has unexpected zeros but no actual zero coordinates. The opposite Rothe diagram $\Dc(v)$ is shown below, with a line between two elements in $P_v$.
We have that $|\Dc(w)|= |P_v|= \cn(\overline{G_{v,w}}) = 4$. Also, $|\Dc(v)|=9$ and $\dim \sigma_{v,w}=6-1=5$. Thus, indeed, $\mathcal N_{v,w}$ is toric. Following \cite[Proposition 4.3]{gitler2010ring}, the binomial generators are $z_{52}- z_{32}z_{53}$,  $z_{62}-z_{52}z_{64}$,  $z_{53}z_{64}-z_{63}$ and  $z_{61}-z_{51}z_{64}$.\\

\begin{minipage}{0.35\textwidth}
\begin{center}
\hspace{2cm}
\begin{tikzpicture}[scale = 0.5]

  \draw[step=1.0,black,thin] (0,0) grid (6,6);

    \node at (0.5,2.5) {{$1$}}; 
   \node at (1.5,4.5) {{$1$}}; 
   \node at (2.5,3.5) {{$1$}}; 
   \node at (3.5,1.5) {{$1$}};
   \node at (4.5,5.5) {{$1$}};
   \node at (5.5,0.5) {{$1$}};

  \draw [line width = 0.0mm, draw=CornflowerBlue, fill=CornflowerBlue, draw opacity = 0.5, fill opacity=0.5]
       (0,0) -- (0,2) -- (3,2) -- (3,1)  -- (5,1) -- (5,0) -- cycle;
   \draw [line width = 0.0mm, draw=CornflowerBlue, fill=CornflowerBlue, draw opacity = 0.5, fill opacity=0.5]
       (1,3) -- (2,3) -- (2,4) -- (1,4)  --cycle;

  \draw[line width = 0.7mm, draw=Gray, draw opacity = 0.4]
  (0.5, 3) -- (0.5,6);
  \draw[line width = 0.7mm, draw=Gray, draw opacity = 0.4]
  (1.5, 5) -- (1.5,6);
  \draw[line width = 0.7mm, draw=Gray, draw opacity = 0.4]
  (2.5, 4) -- (2.5,6);
  \draw[line width = 0.7mm, draw=Gray, draw opacity = 0.4]
  (3.5, 2) -- (3.5,6);
  \draw[line width = 0.7mm, draw=Gray, draw opacity = 0.4]
  (4.5, 6) -- (4.5,6);
  \draw[line width = 0.7mm, draw=Gray, draw opacity = 0.4]
  (5.5, 1) -- (5.5,6);

  \draw[line width = 0.7mm, draw=Gray, draw opacity = 0.4]
  (6, 0.5) -- (6,0.5);
  \draw[line width = 0.7mm, draw=Gray, draw opacity = 0.4]
  (4, 1.5) -- (6,1.5);
  \draw[line width = 0.7mm, draw=Gray, draw opacity = 0.4]
  (1, 2.5) -- (6,2.5);
  \draw[line width = 0.7mm, draw=Gray, draw opacity = 0.4]
  (3,3.5 ) -- (6,3.5);
  \draw[line width = 0.7mm, draw=Gray, draw opacity = 0.4]
  (2, 4.5) -- (6,4.5);
  \draw[line width = 0.7mm, draw=Gray, draw opacity = 0.4]
  (5, 5.5) -- (6,5.5);

   \draw[line width = 0.4mm, draw=black]
   (1.35, 0.65) -- (0.65,1.35);
   \draw[line width = 0.4mm, draw=black]
   (2.35, 0.65) -- (1.65,1.35);
   \draw[line width = 0.4mm, draw=black]
   (3.35, 0.65) -- (2.65,1.35);
   \draw[line width = 0.4mm, draw=black]
   (1.65, 3.35) -- (2.35,1.65);
  
\end{tikzpicture}  
\end{center}
\end{minipage}
\begin{minipage}{0.6\textwidth}
\begin{center}
\hspace{-1.5cm}
\begin{tikzpicture}[->,>=stealth',auto,node distance=1cm]
    \node (G) {$\Gvwtt:$};
    \node[shape=circle,draw=black] (1) [right of =G] {1};
    \node[shape=circle,draw=black] (2) [right of=1] {2};
    \node[shape=circle,draw=black] (3) [right of=2] {3};
    \node[shape=circle,draw=black] (4) [right of=3] {4};
    \node[shape=circle,draw=black] (5) [right of=4] {5};
    \node[shape=circle,draw=black] (6) [right of=5] {6} ;

    \path[every node/.style={font=\sffamily\small}]
    (2) edge node [right] {} (3)
    (4) edge node [right] {} (5)
    (5) edge node [right] {} (6);
    \draw[->] (2) to[out=45, in=135, looseness=1] (5);
    \draw[->] (2) to[out=45, in=135, looseness=1] (6);
    \draw[->] (4) to[out=45, in=135, looseness=1] (6);
    \draw[->] (3) to[out=-45, in=225, looseness=1] (5);
    \draw[->] (3) to[out=-45, in=225, looseness=1] (6);
    \draw[->] (1) to[out=-45, in=225, looseness=1] (6);
  
\end{tikzpicture}
\end{center}
\end{minipage}

\end{example}

\noindent In particular, Corollary \ref{cor: toric unexp zeros} tells us the complexity of KL varieties $\Nvw$ for which $\Dc(w) = \emptyset$.
 
\begin{corollary}
    Let $w=w_0$ be the permutation of maximal length in $S_n$. Then $\Nvw$ is toric if and only if $\Dc(v)$ contains no $(i,j),(k,l)$ with $i<k$ and $j<l$.
\end{corollary}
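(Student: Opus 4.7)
The plan is to reduce the claim to Corollary~\ref{cor: toric unexp zeros} (or equivalently Lemma~\ref{contains cycle}) after checking that the longest permutation $w_0$ imposes neither actual nor unexpected zeros on $\Sigma_v$, and then converting the condition $P_v = \emptyset$ into the combinatorial condition in the statement.

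The first step is to verify $\Dc(w_0) = \emptyset$. Writing $w_0(j) = n+1-j$, the two defining inequalities $w_0(j) < i$ and $w_0^{-1}(i) > j$ simplify to $i+j > n+1$ and $i+j < n+1$, respectively, which cannot simultaneously hold. Consequently no Fulton determinantal condition is imposed on $\Zg$, so no coordinate $z_{ij}$ is forced to vanish: there are no actual zeros. The second step is to observe that, because $w_0$ is the unique maximum of the Bruhat order on $S_n$, the condition $t_{v(j),i}v \leq w_0$ in Definition~\ref{def: unexpected zero} holds automatically for every $(i,j)$, so there are no unexpected zeros either.

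With both hypotheses of Corollary~\ref{cor: toric unexp zeros} in place, the complexity of $\Nvw$ equals $|P_v| - |\Dc(w_0)| = |P_v|$, and toricness becomes equivalent to $P_v = \emptyset$. The final step is to translate $P_v = \emptyset$ into the stated condition on $\Dc(v)$: by the recursive construction of $P_v$, every element is a pair $((i,j),(k,l)) \in \Dc(v) \times \Dc(v)$ with $i<k$ and $j<l$, and conversely, whenever such a pair exists one of minimal distance $d((i,j),(k,l))$ lies in $P_v$. Hence $P_v$ is empty precisely when $\Dc(v)$ contains no two elements in strict south-east position.

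I do not expect a real obstacle here, since the result is essentially a clean specialisation of earlier machinery; the only care required is the bookkeeping around the recursive definition of $P_v$ in the last step, and this can be bypassed entirely by invoking Lemma~\ref{contains cycle} directly, which, under the no-unexpected-zeros hypothesis already verified, asserts that $\Gvwt$ contains a cycle if and only if $\Dc(v)$ contains such a pair. Since $|\Dc(w_0)|=0$ and there are no unexpected zeros, the complexity formula reduces to $\cn(\Gvwt)$, which vanishes exactly when $\Gvwt$ is a forest, giving the desired equivalence.
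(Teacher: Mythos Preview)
Your proof is correct and follows exactly the route the paper intends: the corollary is stated without proof immediately after Corollary~\ref{cor: toric unexp zeros} as a direct specialisation, and you have simply written out the two verifications (that $\Dc(w_0)=\emptyset$ and that $w_0$ imposes no unexpected zeros) together with the translation $P_v=\emptyset \Leftrightarrow$ no strict south-east pair, which is precisely what the paper leaves implicit.
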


\noindent In general, it is known from 
\cite[Prop 6.4]{lee21toricbruhatintpolytopes}
and
\cite[Cor 4.16]{donten2021complexity}, 
that any positive integer is the complexity of some KL variety.
As an off-topic addition, Proposition \ref{cor: toric unexp zeros} and some results from Section \ref{subsec: glueing toric intervals} help us determine which complexities we can achieve for different KL varieties. 

\begin{proposition}
\begin{enumerate}
    \item Fix an integer $n$. Then, for every integer $k$ between $0$ and ${{n-1} \choose 2}$, there exist permutations $v,w \in S_n$ such that $\Nvw$ has complexity $k$. 
    \item Fix a permutation $v$. Then, for every integer $k$ between $0$ and $|P_v|$, there exists a permutation $w \in S_n$ such that $\Nvw$ has complexity $k$. 
    \item Fix a permutation $w$. Then, for every integer $k$ between $0$ and $\ell(w) - |\{ s_i | s_i \leq w \} |$, there exists a permutation $v \in S_n$ such that $\Nvw$ has complexity $k$. 
\end{enumerate}
    \begin{proof}
        By Proposition \ref{prop: complexity change}, the KL varieties coming from subintervals of an interval $[v,w]$ achieve each complexity between $0$ and the complexity of $\Nvw$. For a fixed $n$, the complexity of $\mathcal N_{id, w_0}$ is $\frac{n(n-1)}{2} - (n-1)= {{n-1} \choose 2}$; this proves the first statement. To prove the second, consider a fixed $v$. Then, the complexity of $\mathcal N_{v,w_0}$ is $|P_v|$ by Theorem \ref{thm: cyclomatic number from pairs}. Finally, for a fixed $w$, the complexity of $\mathcal N_{\textrm{id},w}$ is $\ell(w) - |\{ s_i | s_i \leq w \} |$ by \cite[Theorem 5.8]{donten2021complexity}. 
    \end{proof}
\end{proposition}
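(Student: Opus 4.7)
The plan is to leverage Proposition~\ref{prop: complexity change} as the main engine: along any saturated chain in $[v,w]$, the complexity of the corresponding Kazhdan--Lusztig variety changes by $0$ or $1$ at each cover. Since the complexity equals $0$ at the bottom of the chain and some maximum $K$ at the top, every integer in $\{0, 1, \ldots, K\}$ is attained by some subinterval. Hence each of the three statements reduces to exhibiting a single interval whose Kazhdan--Lusztig variety realises the stated maximal complexity, and then walking up (or down) a saturated chain.

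For part (1), I would take $v = \mathrm{id}$ and $w = w_0$. The opposite Rothe diagram $\Dc(\mathrm{id})$ is the strict lower triangle, and for every $j < i$ the transposition $t_{j,i}$ satisfies $t_{j,i} \leq w_0$. Therefore $\Gvwt$ with $(v,w) = (\mathrm{id}, w_0)$ contains every edge $j \to i$ with $j < i$, giving a connected graph on $n$ vertices, so by Lemma~\ref{lem:weightcondim} the weight cone has dimension $n-1$. Since $\dim \mathcal{N}_{\mathrm{id},w_0} = \ell(w_0) = \binom{n}{2}$, the complexity is $\binom{n}{2} - (n-1) = \binom{n-1}{2}$, and Proposition~\ref{prop: complexity change} along any saturated chain from $\mathrm{id}$ to $w_0$ produces every intermediate value.

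For part (2), fix $v$ and take $w = w_0$. Because every element of $S_n$ lies below $w_0$ in the Bruhat order, there are no unexpected zeros in $\Sigma_v$ with respect to $w_0$; moreover $\Dc(w_0) = \emptyset$. Corollary~\ref{cor: toric unexp zeros} then identifies the complexity of $\mathcal{N}_{v,w_0}$ as $|P_v|$, and a saturated chain from $v$ to $w_0$ together with Proposition~\ref{prop: complexity change} realises every integer from $0$ to $|P_v|$.

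For part (3), fix $w$ and use the left-handed analogue of Proposition~\ref{prop: complexity change}: when $v \lessdot v'$, the complexity of $\mathcal{N}_{v,w}$ is either the same as, or one more than, the complexity of $\mathcal{N}_{v',w}$ (the proof is identical, since extending the interval on the left also adds one edge to the graph $G_{\mathcal{C}}$, which can reduce the number of connected components by at most one). Starting from $\mathcal{N}_{\mathrm{id},w}$, whose complexity is $\ell(w) - |\{s_i : s_i \leq w\}|$ by \cite[Theorem~5.8]{donten2021complexity}, and moving $v$ up a saturated chain to $v = w$, at which point $\mathcal{N}_{w,w}$ is a point of complexity $0$, interpolates through every integer in the required range.

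The main subtlety is really just bookkeeping: confirming that the cited maxima are actually attained by these canonical endpoints, and that the left-handed analogue used in (3) is valid. Both reduce to the same graph-theoretic observation that underlies Proposition~\ref{prop: complexity change}, namely that adding a single edge to a graph changes the number of its connected components by $0$ or $1$.
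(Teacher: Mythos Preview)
Your proof is correct and follows essentially the same approach as the paper's: both use Proposition~\ref{prop: complexity change} to interpolate complexities along a saturated chain, and both anchor the three parts at the intervals $[\mathrm{id},w_0]$, $[v,w_0]$, and $[\mathrm{id},w]$ respectively, citing the same external result for part~(3). You are slightly more explicit than the paper in two places---computing the complexity of $\mathcal N_{\mathrm{id},w_0}$ directly via the graph $\Gvwt$, and spelling out the left-handed analogue of Proposition~\ref{prop: complexity change} needed for part~(3)---but these are elaborations rather than departures.
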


\subsection{Complexity of other families}

\noindent We now want to apply similar techniques and provide some statements about the complexity of certain families of KL varieties in terms of the cyclomatic number of $\Gvwt$. We start out by looking at KL varieties where $\Dc(w)$ is a single rectangle i.e., when there exists a unique Fulton determinantal condition.

\begin{proposition}\label{prop: rectangle comp}
    Consider $w\in S_n$, such that $\Dc(w)$ consists of a single rectangle of size $l \times k$. Denote by (a,b) the unique element in $\Ess(w)$ and $m := r_w(a,b)$. Also, denote by $p$ the complexity of $\Nvw$.
    \begin{enumerate}
        \item If $\rk_v(a,b) < m$ then $p= \cn(\Gvwt) - |\Dc(w)| =  |P_v|- kl$. 
        \item If $\rk_v(a,b) =m$ then $p = \cn(\Gvwt)$ and $\Nvw$ is toric if and only if $\Gvwt$ is a forest.
    \end{enumerate}
    \end{proposition}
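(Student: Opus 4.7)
The plan is to use the general complexity formula from Section~\ref{subsec: cycles of Gvw},
$$p = \nu(\Gvwt) - |\Dc(w)| + \#(\text{unexpected zeros}),$$
and count the unexpected zeros in each of the two cases. Since $\Ess(w) = \{(a,b)\}$ is the only essential condition, the Bruhat inequality $v' \leq w$ for $v' := t_{v(j),i}v$ collapses to the single inequality $r_{v'}(a,b) \leq m$, so a coordinate $z_{ij}$ with $(i,j) \in \Dc(v)$ is an unexpected zero exactly when this rank exceeds $m$.

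First I would carry out a direct bookkeeping computation of how the swap $t_{v(j),i}$ affects $r_v(a,b)$. Since this transposition exchanges the values $v(j)$ and $i$ at positions $j$ and $v^{-1}(i)$, and since $(i,j) \in \Dc(v)$ forces $v(j) < i$ and $v^{-1}(i) > j$, a short case check yields
$$r_{v'}(a,b) - r_v(a,b) = \bigl([j \leq b] - [v^{-1}(i) \leq b]\bigr)\bigl([i \geq a] - [v(j) \geq a]\bigr),$$
where each factor lies in $\{0,1\}$. Hence the difference is either $0$ or $+1$, equalling $+1$ precisely when $i \geq a$, $j \leq b$, $v(j) < a$ and $v^{-1}(i) > b$.

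Case (1) then follows immediately: if $r_v(a,b) < m$, then $r_{v'}(a,b) \leq r_v(a,b) + 1 \leq m$ for every $(i,j) \in \Dc(v)$, so $\Nvw$ has no unexpected zeros and the complexity formula gives $p = \nu(\Gvwt) - |\Dc(w)|$. Applying Theorem~\ref{thm: cyclomatic number from pairs} then yields $\nu(\Gvwt) = |P_v|$, hence $p = |P_v| - kl$.

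For case (2), with $r_v(a,b) = m$, the unexpected zeros are exactly the pairs $(i,j)$ with $i \geq a$, $j \leq b$, $v(j) < a$ and $v^{-1}(i) > b$, and any such pair automatically satisfies $(i,j) \in \Dc(v)$. Their number factors as $|R_v| \cdot |C_v|$, where $|R_v| = |\{j \leq b : v(j) < a\}| = b - m$ and $|C_v| = |\{i \geq a : v^{-1}(i) > b\}| = (n-a+1) - m$. The main step, and the place where the rectangular hypothesis on $\Dc(w)$ enters, is to show $|R_v| \cdot |C_v| = kl$. Since the same formulas applied to $w$ give $|R_w| = b - m$ and $|C_w| = (n-a+1) - m$, it suffices to prove $|R_w| = k$ and $|C_w| = l$. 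The forward inclusion $[b-k+1,b] \subseteq \{j \leq b : w(j) < a\}$ is immediate from the rectangle containing row $a$; conversely, if some $j < b-k+1$ had $w(j) < a$, then using $(a,b-k+1) \in \Dc(w)$ to deduce $w^{-1}(a) > b-k+1 > j$, one would find $(a,j) \in \Dc(w)$ outside the rectangle, a contradiction. Substituting $\#(\text{unexpected zeros}) = kl = |\Dc(w)|$ into the complexity formula gives $p = \nu(\Gvwt)$, and toricness is equivalent to $p = 0$, i.e.\ $\Gvwt$ being a forest. The main technical obstacle is the reverse direction of the count $|R_w|=k$, where rectangularity is essential; once established, both cases reduce to direct substitution into the general formula.
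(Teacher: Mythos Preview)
Your proposal is correct and follows essentially the same approach as the paper: both use the complexity formula $p=\nu(\Gvwt)-|\Dc(w)|+\#(\text{unexpected zeros})$, identify the unexpected zeros via the same rank-jump criterion $i\geq a$, $j\leq b$, $v(j)<a$, $v^{-1}(i)>b$, and then count them in each case. The only organisational difference is that the paper explicitly verifies $r_w(c,d)=r_m(c,d)$ for all $c,d$ to show that $u\leq w\iff r_u(a,b)\leq m$, whereas you invoke this directly as a consequence of Fulton's essential-set description of Bruhat order; and in case~(2) you make the identity $(b-m)\bigl((n-a+1)-m\bigr)=kl$ explicit via the rectangle structure, while the paper simply asserts that there are $l$ such rows and $k$ such columns.
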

    \begin{proof}
     First, for $c,d \in [n]$, denote by $\rk_m(c,d)$, the maximal subrank $\rk_u(c,d)$ of matrices $u \in S_n$ with $\rk_u(a,b) \leq m$. We claim that for all such $c,d$, it is $\rk_w(c,d) = \rk_m(c,d)$. Notice that $\rk_m(c,d) \leq \min \{ n-c+1,d \}$. Meanwhile, $w$ must be of the following form:
     $$
    w= (n, \ldots, n-m+1, \, n-m-l, \ldots, n-m-l-k+1, \, n-m, \ldots, n-m-l+1, \, n-m-l-k, \ldots, 1).
$$
     If $c<a$ and $d>b$, then $\rk_w(c,d) = \min \{n-c+1,d \}$. If $c \geq a$ and $d \leq b$, then both $\rk_m(c,d)$ and $\rk_w(c,d)$ are given by $\min \{ m, n-c+1, d \}$.
     If wlog $c \geq a$ and $d>b$, then both $\rk_m(c,d)$ and $\rk_w(c,d)$ are given by $\min \{ m+(d-c), n-c+1 \}$.
     Hence, $w$ dominates all permutations $u$ with $\rk_u(a,b) \leq m$ in the Bruhat order. 
     
    \noindent
    Recall that the complexity of $\Nvw$ is given by $\cn(\Gvwt) - |\Dc(w)| + \# (\textrm{unexpected zeros})$ and that the unexpected zeros of $\Sigma_v$ are the $(i,j) \in \Dc(v)$ such that $t_{v(j),i}v \not\leq w$. We study the change of $\rk_v(a,b)$ under multiplication of $v$ with $t_{v(j),i}$. In the permutation matrix of $v$, left multiplication by $t_{v(j),i}$ swaps rows $v(j)$ and $i$. More precisely, it interchanges the $1$s in the permutation matrix at positions $(v(j),j)$ and $(i, v^{-1}(i))$ with $1$s at positions $(i,j)$ and $(v(j), v^{-1}(i))$. As a result, $\rk_{t_{v(j),i}v}(a,b) = \rk_v(a,b) +1$ if and only if $i \geq a$, $j \leq b$, $v(j)<a$ and $v^{-1}(i) >b$ (or in other words, if $(i,j) \in \SW(w)$ and . 
    the $1$s in that row and column lie outside of $\SW(w)$.
    Otherwise, the subrank stays the same. 

    \begin{enumerate}
        \item Assume $\rk_v(a,b) <m$. Then, for all $(i,j) \in \Dc(v)$, we see that $\rk_{t_{v(j),i}v}(a,b) \leq m$ and therefore $t_{v(j),i}v \leq w$. Hence, in this case, there are no unexpected zeros and the complexity of $\Nvw$ is given by $\cn(\Gvwt) - |\Dc(w)|$.
    \item Assume $\rk_v(a,b) =m$. Then, the only way to obtain $t_{v(j),i}v \not\leq w$ is if $(i,j) \in \SW(w)$, $v(j) <a$ and $v^{-1}(i) >b$. Since $\rk_v(a,b)= m$, there are $l$ rows $i \geq a$ with $v^{-1}(i) >b$ and $k$ columns $j \leq b$ with $v(j) <a$. These tuples $(i,j)$ must lie in the Rothe diagram and they are unexpected zeros. Hence, there are $lk = |\Dc(w)|$ unxepected zeros in $\Sigma_v$ and the complexity of $\Nvw$ is given by $\cn(\Gvwt)$.\qedhere
    \end{enumerate}\end{proof}
\noindent In \cite[Thm 5.14]{donten2021complexity}, the authors address the question of the complexity of $\mathcal N_{s_a,w}$ for some simple reflection~$s_a$. A related question would be to determine the complexity of $\mathcal N_{v,w_0\cdot s_a}$. We generalise this and give a characterisation of the complexity of $\mathcal N_{v,w_0\cdot t}$ for some transposition $t$ in terms of the cyclomatic number of $\Gvwt$.

\begin{proposition}\label{prop: change w for complexity}
Consider $w = w_0\cdot t_{l,k}$ and $v < w$. Denote by $(a_1,b_1)=(n-k+2,l),(a_2,b_2)=(n-l+1,k-1)$ the two elements in $\Ess(w)$ (or the unique element, since these coincide in case of a simple reflection) and by $p$, the complexity of $\Nvw$.
\begin{enumerate}
    \item If $\rk_v(a_1,b_1), \rk_v(a_2,b_2) < l-1$ then $p= \cn(\Gvwt)- |\Dc(w)| = |P_v| - 2(k-l)+1$. 
    \item If $\rk_v(a_1,b_1)= l-1, \rk_v(a_2,b_2) < l-1$ or 
          $\rk_v(a_1,b_1)< l-1, \rk_v(a_2,b_2) =  k-1$
          then $p = \cn(\Gvwt) - |\Dc(w)| + (k-1) = \cn(\Gvwt) +l-k+1$.
    \item If $\rk_v(a_1,b_1) = \rk_v(a_2,b_2) = l-1$ then $p = \cn(\Gvwt)$ and $\Nvw$ is toric if and only if $\Gvwt$ is a forest.
\end{enumerate}
\begin{proof}
    We proceed similarly as in the proof of Proposition \ref{prop: rectangle comp}.
    First, notice that $\rk_w(a_1,b_1)=\rk_w(a_2,b_2) = l-1 =:m$. For $c,d \in [n]$, denote by $\rk_u(c,d)$ the maximal subrank $\rk_u(c,d)$ of matrices $u \in S_n$ with $\rk_u(a_1,b_1), \rk_u(a_2,b_2)\leq m$. Then, for all such $c,d$, it is $\rk_w(c,d)= \rk_m(c,d)$. Denote by $\SW(c,d)$ the set of elements that lie south-west of $(c,d)$.
    Further, $\rk_{t_{v(j),i}v}(a_s,b_s) = \rk_v(a_s,b_s)+1$ if and only if $(i,j)\in \SW(a_s,b_s)$, but $(v(j),j),(i, v^{-1}(i)) \not\in \SW(a_s,b_s)$, otherwise the subrank stays the same.

    \begin{enumerate}
    \item Assume $\rk_v(a_1,b_1), \rk_v(a_2,b_2) < m$. Then still, for all $(i,j) \in \Dc(v)$, $\rk_{t_{v(j),i}\cdot v}(a_s,b_s) \leq m$ and therefore $t_{v(j),i}\cdot v \leq w$. Hence, there are no unexpected zeros in this case.

    \item Wlog, assume $\rk_v(a_1,b_1)= m, \rk_v(a_2,b_2) < m$. We obtain an unexpected zero $(i,j)$ if and only if $(i,j) \in \SW(a_1,b_1)$ but the 1s in $v$ in row $i$ and column $j$ are not south-west. Hence, there are $1 \cdot (n-a_1+1-m) = k-l$ unexpected zeros in this case.

    \item Assume that $\rk_v(a_1,b_1) = \rk_v(a_2,b_2) = m$. Denote $A:= \SW(a_2,b_1)$, $B:= \SW(a_1,b_1) \setminus A$ and $C:= \SW(a_2,b_2) \setminus A$. Say there are $m-q$ 1s in $A$ and $q$ 1s in both $B$ and $C$.
    Inside of $A$ there are 
    \begin{itemize}
    \item one column with 1 outside of $A \cup B$;
    \item $q$ columns with 1 in $B$;
    \item one row with 1 outside of $A \cup C$; and 
    \item $q$ rows with 1 in $C$. 
    \end{itemize}
    Hence, inside $A$ there are 
    \begin{itemize} 
    \item $1 \cdot 1$ element $(i,j)$ with 1 in row $i$ outside $A \cup C$ and 1 in column $j$ outside $A \cup B$; 
    \item $q \cdot 1$ elements $(i,j)$ with 1 in column $j$ outside $A\cup B$ but 1 in row $i$ inside of $C$; and 
    \item $q \cdot 1$ elements $(i,j)$ with 1 in row $i$ ouside of $A \cup C$ but 1 in column $j$ inside of $B$. 
    \end{itemize}
    This sums to $2q+1$ unexpected zeros inside of $A$. Notice that rows in $A$ are also rows in $C$ and columns in $A$ are also columns in $B$.
    Denote by $h_1 = (n-a_1+1)-(m+1)$, the height of $B$ and by $h_2 = b_2 - (m+1)$, the width of $C$. There are $h_1-q$ rows in $B$ with a 1 outside of $A \cup B$ and thus $(h_1-q)\cdot 1$ unexpected zeros in $B$. There are $h_2-q$ columns in $C$ with a 1 outside of $A \cup C$ and thus $(h_2-q)\cdot 1$ unexpected zeros in $C$. In total, we obtain $h_1+h_2+1 = |\Dc(w)|$ unexpected zeros.\qedhere
    \end{enumerate}
\end{proof}
\end{proposition}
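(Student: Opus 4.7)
The plan is to exploit the complexity formula
\[
p = \cn(\Gvwt) - |\Dc(w)| + \#(\text{unexpected zeros})
\]
established earlier in this section, and to count unexpected zeros carefully in each of the three cases. The first step is to describe $w = w_0 \cdot t_{l,k}$ and its opposite Rothe diagram: a direct check of the defining conditions for $(i,j) \in \Dc(w)$ shows that $\Dc(w)$ is the union of a vertical strip along column $l$ and a horizontal strip along row $n-l+1$, meeting at $(n-l+1,l)$, with $|\Dc(w)| = 2(k-l)-1$; the north-east corners are precisely $(a_1,b_1)$ and $(a_2,b_2)$ and the rank at both equals $m := l-1$, so by the essential-set criterion $u \leq w$ iff $\rk_u(a_s,b_s) \leq m$ for $s=1,2$.

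The central technical lemma I will establish is that for $(i,j) \in \Dc(v)$, left multiplication of $v$ by $t_{v(j),i}$ raises $\rk_v(c,d)$ by exactly one when $(i,j) \in \SW(c,d) := \{(p,q): p \geq c,\, q \leq d\}$ while both $(v(j),j)$ and $(i,v^{-1}(i))$ lie outside $\SW(c,d)$, and otherwise leaves the subrank unchanged. This is routine, following from the fact that left multiplication by $t_{v(j),i}$ swaps rows $v(j)$ and $i$ of the permutation matrix of $v$, so the $1$s at $(v(j),j)$ and $(i, v^{-1}(i))$ move to $(i,j)$ and $(v(j), v^{-1}(i))$.

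With this lemma in hand, case (1) is immediate: if $\rk_v(a_s,b_s)<m$ for both $s$, then no single transposition can push either rank above $m$, so every $t_{v(j),i}v \leq w$ and there are no unexpected zeros; combined with Theorem~\ref{thm: cyclomatic number from pairs} this gives $p = |P_v| - 2(k-l)+1$. In case (2), WLOG with $\rk_v(a_1,b_1)=m$ and $\rk_v(a_2,b_2)<m$, the unexpected zeros are precisely those $(i,j)\in \Dc(v)$ that raise $\rk_v(a_1,b_1)$ above $m$; counting the $(i,j) \in \SW(a_1,b_1) \cap \Dc(v)$ whose row and column $1$s of $v$ both escape $\SW(a_1,b_1)$ will produce $k-l$ such points, yielding the stated formula.

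The main obstacle is case (3), where both essential-set ranks are saturated. My plan is to decompose $\SW(a_1,b_1) \cup \SW(a_2,b_2)$ into the three disjoint regions $A := \SW(a_2,b_1)$, $B := \SW(a_1,b_1)\setminus A$, and $C := \SW(a_2,b_2)\setminus A$, and to introduce a parameter $q$ recording how the $m$ saturating $1$s of $v$ split between $A$ (getting $m-q$) and $B, C$ (getting $q$ each). A careful combinatorial inventory then yields $2q+1$ unexpected zeros inside $A$, together with $h_1-q$ and $h_2-q$ in $B$ and $C$ respectively, where $h_1$ and $h_2$ denote the height of $B$ and width of $C$. Summing these contributions produces exactly $|\Dc(w)| = 2(k-l)-1$ unexpected zeros, hence $p = \cn(\Gvwt)$, and consequently toricness is equivalent to $\Gvwt$ being a forest. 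The delicate bookkeeping involved in tracking both essential-set conditions simultaneously is what I expect to require the most care.
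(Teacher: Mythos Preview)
Your proposal is correct and follows essentially the same approach as the paper: both arguments use the complexity formula $p = \cn(\Gvwt) - |\Dc(w)| + \#(\text{unexpected zeros})$, establish the same technical lemma characterising when $\rk_{t_{v(j),i}v}(a_s,b_s)$ exceeds $\rk_v(a_s,b_s)$, and in case~(3) carry out the identical decomposition into regions $A = \SW(a_2,b_1)$, $B = \SW(a_1,b_1)\setminus A$, $C = \SW(a_2,b_2)\setminus A$ with the parameter $q$ and the counts $2q+1$, $h_1-q$, $h_2-q$. The only cosmetic difference is that you invoke the essential-set criterion directly to conclude $u \leq w \iff \rk_u(a_s,b_s) \leq m$ for $s=1,2$, whereas the paper phrases this as $\rk_w(c,d) = \rk_m(c,d)$ for all $c,d$; these are equivalent.
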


\section{Statistical models}\label{sec: statistical models}

\noindent
In this section, we consider applications of matrix Schubert and Kazhdan-Lusztig varieties to two families of
statistical models, each of which is prevalent in the literature of algebraic statistics. 
  
\subsection{Conditional Independence Models}\label{subsec: ci models}
\noindent To begin with, we study the relations between matrix Schubert and Kazhdan-Lustzig varieties  and conditional independence (CI) models. CI models have been well-studied in algebraic staistics and we refer the reader to \cite[Chapter 4]{sullivant2023algebraic} for further details. We say that the random variables $X_A$ and $X_B$ are conditionally independent given $X_C$, if and only if, given a value of $X_C$, the probability distribution of $X_A$ is the same for all values of $X_B$ and the probability distribution of $X_B$ is the same for all values of $X_A$. In other words:
$$\mathbb{P}(X_A,X_B|X_C) = \mathbb{P}(X_A|X_C)\mathbb{P}(X_B|X_C).$$

In particular, in order to draw parallels between CI models and the determinantal varieties we have defined above, we focus on the following case. Let $X$ be an $m$ dimensional Gaussian random vector; $X=(X_1,\ldots,X_m) \sim \mathcal{N}(\mu, \Sigma)$. Here, $\mu$ is the mean vector of the random vector and $\Sigma$ is the covariance matrix, such that the $(i,j)$th entry of $\Sigma$ is the covariance of $X_i$ and $X_j$. Thus, $\Sigma$ is symmetric and positive definite. Let $A$ be any subset of $[m]$. Then, we denote by $X_A$ the subvector of $X$ given by $(X_a)_{a\in A}$. For two sets $A, B \subseteq[m]$, $\Sigma_{A,B}$ is the submatrix of $\Sigma$ with rows indexed by $A$ and columns indexed by $B$. In the remainder of this paper, we will use the following Proposition as the definition of conditional independence. 

\begin{proposition}[{\cite{sullivant2023algebraic}}, Proposition 4.19]
    Let $X \sim \mathcal{N}(\mu, \Sigma)$ and $A,B,C$ be disjoint subsets of $[n]$. Then, the conditional independence statement: $X_A$ is independent of $X_B$ given $X_C$, denoted $X_A\indep X_B | X_C$ or also $A \indep B | C$, holds if and only if
    $$ \mathrm{rank } (\Sigma_{A\cup C, B\cup C})= | C|.$$
\end{proposition}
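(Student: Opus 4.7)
The plan is to prove the equivalence by reducing both sides to a condition on the Schur complement of $\Sigma_{C,C}$ inside $\Sigma_{A \cup C, B \cup C}$. The backbone is that for a Gaussian vector everything can be computed from second-order information, so both conditional independence and the rank condition become linear-algebraic statements about $\Sigma$.

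First I would recall the classical fact that for a jointly Gaussian vector $(X_A, X_B, X_C) \sim \mathcal{N}(\mu, \Sigma)$, the conditional distribution of $(X_A, X_B)$ given $X_C = x_C$ is again Gaussian, with covariance given by the Schur complement
\[
  \Sigma_{A \cup B, A \cup B} \,-\, \Sigma_{A \cup B, C}\,\Sigma_{C,C}^{-1}\,\Sigma_{C, A \cup B}.
\]
Since $\Sigma$ is positive definite, $\Sigma_{C,C}$ is invertible, so this Schur complement is well-defined. Two jointly Gaussian vectors are independent if and only if their cross-covariance is zero, so $X_A \indep X_B \mid X_C$ is equivalent to the off-diagonal $(A,B)$-block of the above Schur complement vanishing, i.e.
\[
  \Sigma_{A,B} \,=\, \Sigma_{A,C}\,\Sigma_{C,C}^{-1}\,\Sigma_{C,B}.
\]

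Next I would invoke the standard block-matrix rank identity: for any block matrix
\[
  M \,=\, \begin{pmatrix} P & Q \\ R & S \end{pmatrix}
\]
with $S$ invertible, one has $\operatorname{rank}(M) = \operatorname{rank}(S) + \operatorname{rank}(P - Q S^{-1} R)$. Applying this with $P = \Sigma_{A,B}$, $Q = \Sigma_{A,C}$, $R = \Sigma_{C,B}$ and $S = \Sigma_{C,C}$ (after reordering the rows and columns of $\Sigma_{A \cup C, B \cup C}$ so that the $C$-indices come last, which does not affect the rank), we obtain
\[
  \operatorname{rank}\bigl(\Sigma_{A \cup C, B \cup C}\bigr) \,=\, |C| + \operatorname{rank}\bigl(\Sigma_{A,B} - \Sigma_{A,C}\,\Sigma_{C,C}^{-1}\,\Sigma_{C,B}\bigr).
\]
Hence the rank equals $|C|$ exactly when the Schur complement block vanishes, which by the previous step is exactly the condition $X_A \indep X_B \mid X_C$.

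The step requiring most care is the first one, establishing the Gaussian conditional distribution and the equivalence of independence with vanishing covariance; this is a standard but nontrivial computation that one usually carries out via the characteristic function or by completing the square in the joint density. If one wishes to avoid this, one can instead work entirely on the algebraic side: since $\Sigma_{C,C}$ is positive definite, the change of basis $X_A \mapsto X_A - \Sigma_{A,C}\Sigma_{C,C}^{-1}X_C$ and $X_B \mapsto X_B - \Sigma_{B,C}\Sigma_{C,C}^{-1}X_C$ produces a Gaussian vector whose $A$- and $B$-components are uncorrelated with $X_C$, so that conditional independence given $X_C$ reduces to marginal independence of these residuals, which is again governed by the Schur complement. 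Either route reduces the proposition to the block-matrix rank identity, which is the cleanest way to package the final equivalence.
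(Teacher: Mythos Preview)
The paper does not supply a proof of this proposition: it is quoted verbatim as \cite[Proposition~4.19]{sullivant2023algebraic} and immediately adopted as the working definition of Gaussian conditional independence, so there is no in-paper argument to compare against.

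Your argument is correct and is the standard one. The two ingredients you use --- (i) that the conditional law of $(X_A,X_B)$ given $X_C$ is Gaussian with covariance the Schur complement of $\Sigma_{C,C}$, and (ii) the block-rank identity $\operatorname{rank}(M)=\operatorname{rank}(S)+\operatorname{rank}(P-QS^{-1}R)$ when $S$ is invertible --- are exactly what Sullivant's proof rests on. One small point worth making explicit: you are using that $\Sigma$ is positive definite to guarantee that $\Sigma_{C,C}$ is invertible (so the Schur complement exists) and that $\operatorname{rank}(\Sigma_{C,C})=|C|$; the paper records this hypothesis in the surrounding text rather than in the proposition itself, and indeed exploits positive definiteness again immediately afterwards to replace the equality by the inequality $\operatorname{rank}(\Sigma_{A\cup C,B\cup C})\leq |C|$.
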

\noindent Now, since the covariance matrix is positive definite, we can translate this condition into 
$$ \mathrm{rank } (\Sigma_{A\cup C, B\cup C} )\leq | C|.$$
Thus, we naturally associate to the conditional independence statement, the \emph{CI ideal}
$$J_{A\indep B | C} := \langle (|C| + 1) \text{ minors of } \Sigma_{A\cup C, B\cup C} \rangle \subseteq \mathbb{C}[\Sigma].$$
In order to be able to explore the connections between CI varieties and matrix Schubert varieties, we first need to introduce the following subclasses of matrix Schubert varieties.

\subsubsection{Symmetric and lower triangular matrix Schubert varieties}
\noindent In this subsection, we investigate the two subclasses of matrix Schubert varieties as defined below, which were originally introduced in \cite{fink2016matrix}. The definitions we use are analogous to the Fulton conditions in the statement of Theorem \ref{thm:fultonessential}. 

\begin{definition}
    Let $w \in S_n$ be a permutation. 
    \begin{enumerate}
        \item The \emph{symmetric matrix Schubert ideal} $I^{\text{sym}}_w$ is the ideal generated by imposing the Fulton determinantal conditions, as defined by Theorem \ref{thm:fultonessential} on a generic symmetric $n\times n$ matrix. The \emph{symmetric matrix Schubert variety} $\overline{X^{\text{sym}}_w}$ is defined as the variety associated to $I^{\text{sym}}_w$.
         \item The \emph{lower triangular matrix Schubert ideal} $I^{\text{low}}_w$ is the ideal generated by imposing the Fulton determinantal conditions on a generic lower triangular $n\times n$ matrix. The \emph{lower triangular matrix Schubert variety} $\overline{X^{\text{low}}_w}$ is defined as the variety associated to $I^{\text{low}}_w$.
    \end{enumerate}
\end{definition}

\noindent Formally, we can think of these varieties as the usual matrix Schubert variety intersected with the polynomial conditions that make a matrix a symmetric or lower triangular matrix.
Further, we introduce the following notation that will be useful throughout the remainder of this section. We are in the setting of $n\times n$ matrices.
\begin{itemize}
    \item $\Delta^{\textrm{low}} \coloneqq \{ (i,j) \ | \ i,j\in[n], i\geq j \}$,
    \item $\Delta^{\textrm{up}} \coloneqq \{ (i,j) \ | \ i,j\in[n], i< j \}$.
\end{itemize}
\noindent Note that $\Delta^{\textrm{low}}$ contains the elements along the diagonal and anything below it, while $\Delta^{\textrm{up}}$ only contains the squares above the diagonal, and not the diagonal itself.

Similar to the general case, we can write 
$\overline{X_w^{\sym}} = Y_w^{\sym} \times \mathbb C^{d^{\sym}}$, where $Y_w^{\sym}$ is the projection of $\overline{X_w^{\sym}}$ onto the entries of $L(w)$ and its reflection on the diagonal,
and $\mathbb C^{d^{\sym}}$
is isomorphic to the projection onto the entries that do not appear in the Fulton conditions. If $d$ is such that $\overline{X_w} = Y_w \cap \mathbb C^d$, then $d^{\sym} =  |\SW(w)^{\complement} \cap \Delta^{\low}| = d - |\SW(w)^{\complement} \cap \Delta^{\textrm{up}}|$. Here, $\SW(w)^{\complement}$ is the complement of $\SW(w)$ in the whole $n\times n$ grid.
Similarly, we can define a $T \times T$ action of the subvariety $Y_w^{\sym}$ with weight cone
$\sigma_w^{\sym} = \textrm{Cone}(e_i -f_j \mid (i,j) \in L(w) \cap \Delta^{\low})$, where $e_1, \ldots, e_n, f_1, \ldots, f_n$ denote the standard basis of $\mathbb Z^n \times \mathbb Z^n$.
We can also define the corresponding graph $G_w^{\sym}$ on $[n] \amalg [n]$ with an edge $a \rightarrow b^*$ whenever $(a,b) \in L(w) \cap \Delta^{\low}$.
The same can be done for $X_w^{\low}$.

\begin{proposition}
We have that $\dim(\overline{ X^{\low}_w}) = \dim(\overline{ X^{\sym}_w})= \dim(\Xw) - {n\choose2}$.

\begin{proof}
Denote by $G = \{ g_1, \ldots, g_r \}$, the set of determinantal conditions that arise from the essential minors and $H = \{ z_{ij} \mid (i,j) \in \Delta^{\textrm{up}} \}$, such that $\overline {X_w^{\low}} \simeq V( \langle G \rangle + \langle H \rangle )$. Here, $z_{ij}$ denote the entries of a generic $n \times n$-matrix, which are our variables. Fix a diagonal monomial order $<$. Then by \cite[Theorem B]{knutson2005geometry}, $G$ and $H$ are Gröbner bases of the ideals that they span. We claim that $G \cup H$ is a Gröbner basis for the sum of these ideals. All that we now need to show is that the $S$-polynomials $S(g_l, z_{ij})$ have a standard representation with respect to $G \cup H$. This follows from \cite[Section 2.9, Proposition 4]{cox1997ideals}, since the leading monomial of $g_l$ is the product of the diagonal of the corresponding minor, which lies below the overall diagonal and thus contains no $z_{ij}$, for $(i,j) \in \Delta^{\textrm{up}}$. Therefore, $\init_{<}( \langle G \rangle + \langle H \rangle) = \init_{<} \langle G \rangle + \init_{<} \langle H \rangle$ and
\begin{align*}
V( \init_{<} ( \langle G \rangle + \langle H \rangle) = V( \init_{<}(G)) \cap V(z_{ij} \mid (i,j) \in \Delta^{\textrm{up}}) \simeq V_{\low}(\init_{<}(G)).
\end{align*}
Here, $V_{\low}$ refers to the variety in the space of lower triangular matrices associated to $\mathbb C[z_{ij} |i \geq j]$
and the isomorphism comes from the fact that $\init_{<}(G)$ contains no $z_{ij}$ with $(i,j) \in \Delta^{\textrm{up}}$. Then,
$$\codim (V_{\low}(\init_{<}(G))) = \codim (V( \init_{<}(G))) = \codim \langle G \rangle = |\Dc(w)|;$$
and
$$\dim(\overline{ X^{\low}_w}) = \dim V_{\low}(\init_{<}(G)) = n^2 - {n \choose 2} - |\Dc(w)| = \dim X_w - {n \choose 2}. 
$$
To show that the same holds for $\dim(\overline{ X^{\sym}_w})$, one may e.g.\ take the lexicographic term order induced by 
$$z_{11} > z_{12} > \cdots > z_{1n} > z_{21} > z_{22} > \cdots > z_{2n} > \cdots > z_{n1} > \cdots > z_{nn}.$$
Then, for $H' = \{ z_{ji} - z_{ij} \mid (i,j) \in \Delta^{\textrm{up}} \}$, $X_{w}^{\sym} \simeq V( \langle G \rangle + \langle H' \rangle )$ and $\init_{<}(H') = H$, therefore the proof follows just as above.
\end{proof}
\end{proposition}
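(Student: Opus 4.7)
The plan is a Gröbner basis argument. Both varieties arise from the matrix Schubert ideal $I_w \subset R = \mathbb{C}[z_{ij}]$ by adjoining the linear conditions that cut out lower triangular or symmetric matrices inside $\mathbb{C}^{n \times n}$; since those linear subspaces each have codimension $\binom{n}{2}$, the content of the proposition is precisely that the Fulton determinantal generators of $I_w$ remain independent of the adjoined linear relations, so that the total codimension equals $|\Dc(w)| + \binom{n}{2}$.

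For the lower triangular case, let $G = \{g_1, \ldots, g_r\}$ be the Fulton minors generating $I_w$ and set $H = \{z_{ij} : (i,j) \in \Delta^{\textrm{up}}\}$, so $\overline{X_w^{\low}} = V(\langle G \rangle + \langle H \rangle)$. I would fix a diagonal term order $<$ so that, by Knutson--Miller, $G$ is a Gröbner basis of $I_w$ (with squarefree initial ideal of codimension $|\Dc(w)|$). The main step is to verify that $G \cup H$ is a Gröbner basis of $\langle G \rangle + \langle H \rangle$ via Buchberger's criterion: since the leading monomial of each $g_l$ is a product of entries along a diagonal of some southwest submatrix $M_{[a,b]}$, and these diagonal entries will lie in $\Delta^{\low}$, they are coprime to every variable in $H$, so each $S$-pair $S(g_l, z_{ij})$ reduces to zero trivially. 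This gives $\init_<(\langle G \rangle + \langle H \rangle) = \init_<\langle G \rangle + \langle H \rangle$; the variety of this monomial ideal has codimension $|\Dc(w)| + \binom{n}{2}$, and since dimension is preserved under degeneration, $\dim \overline{X_w^{\low}} = n^2 - |\Dc(w)| - \binom{n}{2} = \dim \Xw - \binom{n}{2}$.

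The symmetric case follows the same template with $H' = \{z_{ji} - z_{ij} : (i,j) \in \Delta^{\textrm{up}}\}$ in place of $H$. The trick is to pick a term order under which $\init_<(H') = H$; the row-major lexicographic order $z_{11} > z_{12} > \cdots > z_{nn}$ achieves this, since for $i < j$ we have $z_{ij} > z_{ji}$, whence $\init_<(z_{ji} - z_{ij}) = z_{ij}$. Provided this order is still diagonal on the Fulton minors, the leading monomials of $G$ again lie in $\Delta^{\low}$ and remain coprime to the leading monomials of $H'$, so the Buchberger check and the codimension count carry over verbatim, yielding $\dim \overline{X_w^{\sym}} = \dim \Xw - \binom{n}{2}$.

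The main technical obstacle is the combinatorial claim that the leading monomial of every essential minor lies in $\Delta^{\low}$. Concretely, an $r \times r$ minor of $M_{[a,b]}$ with $r = r_w(a,b) + 1$ has diagonal entries $(i_k, j_k)$ with $i_k \geq a + k - 1$ and $j_k \leq b - r + k$, so $i_k - j_k \geq a + r - 1 - b$. The key input is the inequality $r_w(a,b) \geq b - a$ whenever $a \leq b$, which follows from the permutation property of $w$: the number of columns $j \leq b$ with $w(j) < a$ is bounded above by $a - 1$, so at least $b - (a - 1)$ of the $b$ columns send $j$ into row $\geq a$. This forces $i_k \geq j_k$ for every $k$; when $a > b$ the conclusion is immediate. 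This is the only step where the southwest orientation of $M_{[a,b]}$ and the combinatorics of $w$ are genuinely used, and it is what makes the Buchberger reduction clean.
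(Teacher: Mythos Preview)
Your proposal is correct and follows essentially the same approach as the paper: Gr\"obner basis argument using a diagonal term order, Knutson--Miller for $G$, coprimality of leading terms to check Buchberger, and the row-major lex order for the symmetric case so that $\init_<(H') = H$. You actually supply more detail than the paper does on the key combinatorial point---that the main-diagonal entries of every essential minor lie in $\Delta^{\low}$ via the rank bound $r_w(a,b) \geq b-a+1$---which the paper simply asserts.
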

\noindent This is coherent with \cite[Proposition 3.7]{fink2016matrix}. We immediately conclude the following.
\begin{corollary} 
 $\dim(Y^{\low}_w)=\dim(Y^{\sym}_w)=\dim(Y_w) - \vert \SW(w)\cap\Delta^{\textrm{up}}\vert$.
\begin{proof}
    If $\overline{X_w} = Y_w \cap \mathbb C^d$, then $\dim Y_w^{\sym} = (\dim \overline{X_w} - {n \choose 2}) - (d- |\SW(w)^{\complement} \cap \Delta^{\textrm{up}}|)$. The claim follows since $ {n \choose 2} -|\SW(w)^{\complement} \cap \Delta^{\textrm{up}}| = |\SW(w) \cap \Delta^{\textrm{up}}|$. The same holds for $Y_w^{\low}$.
\end{proof}
\end{corollary}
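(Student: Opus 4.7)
The plan is to derive the dimension of $Y^{\sym}_w$ and $Y^{\low}_w$ by combining the dimension formula for $\overline{X^{\sym}_w}$ and $\overline{X^{\low}_w}$ from the previous proposition with a careful bookkeeping of which entries of the generic (symmetric or lower triangular) matrix are ``free'', i.e.\ do not appear in any Fulton determinantal condition. The key identity to establish is purely combinatorial: a counting relation among $\SW(w)$, its complement, and the two halves $\Delta^{\low}$ and $\Delta^{\textrm{up}}$ of the grid.

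First I would recall that in the non-symmetric setting one has $\overline{X_w} = Y_w \times \mathbb{C}^d$ with $d = |\SW(w)^{\complement}| = n^2 - |\SW(w)|$, since the Fulton conditions impose nothing on entries outside $\SW(w)$. In the symmetric case the analogous splitting is $\overline{X^{\sym}_w} = Y^{\sym}_w \times \mathbb{C}^{d^{\sym}}$, where the free factor corresponds to the entries of a generic symmetric matrix unconstrained by the Fulton conditions. Using that a symmetric matrix is parametrised by its entries in $\Delta^{\low}$, these free entries are indexed by $\SW(w)^{\complement} \cap \Delta^{\low}$, so $d^{\sym} = |\SW(w)^{\complement} \cap \Delta^{\low}|$. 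The same reasoning with the lower triangular parametrisation gives $d^{\low} = |\SW(w)^{\complement} \cap \Delta^{\low}|$ as well.

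Next, invoking the previous proposition, $\dim \overline{X^{\sym}_w} = \dim \overline{X^{\low}_w} = \dim \overline{X_w} - \binom{n}{2}$. Subtracting $d^{\sym}$ (resp.\ $d^{\low}$) on each side and comparing with $\dim Y_w = \dim \overline{X_w} - d$ yields
\[
\dim Y^{\sym}_w - \dim Y_w = d - d^{\sym} - \binom{n}{2}.
\]
The main (and only) step is to verify the combinatorial identity
\[
d - d^{\sym} - \binom{n}{2} = -|\SW(w)\cap\Delta^{\textrm{up}}|.
\]
This follows by substituting $d = n^2 - |\SW(w)|$ and $d^{\sym} = |\SW(w)^{\complement} \cap \Delta^{\low}|$, using $n^2 - \binom{n}{2} = |\Delta^{\low}|$, and splitting $\SW(w)$ and $\Delta^{\low}$ into the two pieces according to whether they lie in $\Delta^{\low}$ or $\Delta^{\textrm{up}}$; both sides reduce to $-|\SW(w)\cap\Delta^{\textrm{up}}|$. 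The same argument applied to $Y^{\low}_w$ gives the identical conclusion.

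There is no hard obstacle here: once the free-factor count $d^{\sym} = d^{\low} = |\SW(w)^{\complement} \cap \Delta^{\low}|$ is correctly identified and the dimension formula for $\overline{X^{\sym}_w}$, $\overline{X^{\low}_w}$ from the preceding proposition is in hand, the statement reduces to elementary arithmetic of cardinalities. The only point that deserves care is checking that in both the symmetric and lower-triangular settings the ``free'' entries are genuinely indexed by $\SW(w)^{\complement} \cap \Delta^{\low}$; for the symmetric case this uses the fact that a symmetric matrix is determined by its values on $\Delta^{\low}$, and the Fulton-condition entries correspond to $\SW(w)\cap\Delta^{\low}$ after reflection across the diagonal.
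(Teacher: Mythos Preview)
Your proposal is correct and follows essentially the same approach as the paper: both use the previous proposition's formula $\dim \overline{X_w^{\sym}} = \dim \overline{X_w} - \binom{n}{2}$, identify $d^{\sym} = |\SW(w)^{\complement}\cap\Delta^{\low}|$, and reduce to the elementary identity $\binom{n}{2} - |\SW(w)^{\complement}\cap\Delta^{\textrm{up}}| = |\SW(w)\cap\Delta^{\textrm{up}}|$. Your write-up is slightly more detailed in spelling out the arithmetic, but the argument is the same.
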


\noindent Now, our aim is to calculate the complexity of $Y^{\textrm{low}}_w$ and $Y^{\textrm{sym}}_w$. In order to do this, we need to investigate their associated weight cones. 

\begin{theorem}
     The following statement holds for the dimensions of the different weight cones: $$\dim \sigma^{\sym}_{w} = \dim \sigma^{\low}_{w} = \dim \sigma_{w}.$$
    \begin{proof}
        It suffices to show that removing from the weight cone $\sigma_w$ the weights coming from the upper diagonal does not change the number of connected components of $G_w$.
        We note that the connected components of $G_w$ are in one to one correspondence to the connected components of $L(w)$ and that the connected components of $G_w^{\sym}$ and $G_w^{\low}$ are in one to one correspondence to the connected components of $L(w)\cap \Delta^{\textrm{low}}$.  We also note that any two connected components of $L(w)$ are separated by $\dom(w)$. 
        The only way to achieve that a connected piece in $L(w)$ is no longer connected when intersecting with $\Delta^{\low}$ is if the dominant piece $\dom(w)$ intersects the subdiagonal.
        To this end, consider a north-east corner $(a,b)$ of $\dom(w)$. The south-west submatrix $w_{[a,b]}$ of $w$ has height $n-a+1$ and width $b$. Since there are no $1$s in $w_{[a,b]}$, there must be $b$ $1$s north of and $n-a+1$ $1$s east of $w_{[a,b]}$. If $b \geq a$ (i.e.\ if $\dom(w)$ intersects the diagonal), then $(n-a+1)+b >n$ and the placement of $1$s described above is not possible. If $b=a-1$, then $(n-a+1)+b =n$ and there thus are no $1$s strictly north-east of $w_{[a,b]}$. As a consequence, all other elements in $\Dc(w)$ are either strictly more south or strictly more west than $(a,b)$. Hence, any $(c_1,d_1)$ north of and any $(c_2,d_2)$ east of $w_{[a,b]}$ are not in the same connected component of $L(w)$. Therefore, removing $\Delta^{\textrm{up}}$ does not change the number of connected components.
    \end{proof}
    
\end{theorem}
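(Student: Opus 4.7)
The plan is to reduce both equalities to a single combinatorial claim about $G_w$. By definition $G_w^{\sym}$ and $G_w^{\low}$ share the edge set $L(w)\cap\Delta^{\low}$, so the cones $\sigma_w^{\sym}$ and $\sigma_w^{\low}$ coincide, and it suffices to prove $\dim\sigma_w^{\sym}=\dim\sigma_w$. Using Lemma~\ref{lem:weightcondim}, each dimension takes the form $|V|-c$ for the corresponding graph; to establish the equality I would prove the stronger statement (i)~$V(G_w)=V(G_w^{\sym})$ and (ii)~$c(G_w)=c(G_w^{\sym})$.

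The key structural fact underlying both parts is that $\dom(w)$ lies strictly below the main diagonal. I would prove this by a counting argument at an arbitrary NE corner $(a,b)$ of $\dom(w)$: since $r_w(a,b)=0$, the SW submatrix $w_{[a,b]}$ of shape $(n-a+1)\times b$ contains no $1$s of the permutation matrix, so the $b$ ones in columns $1,\dots,b$ must fit into the $a-1$ rows strictly above row $a$, giving $b\le a-1$. As an immediate consequence, every cell of $\SW(w)$ on the main diagonal belongs to $L(w)\cap\Delta^{\low}$.

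Claim~(i) is quick: suppose $i\in V(G_w)\setminus V(G_w^{\sym})$, so every $(i,j)\in L(w)$ must have $j>i$. Picking any such $(i,j)$ and a witnessing essential box $(a,b)\in\Ess(w)$ gives $a\le i$ and $b\ge j>i$; then $(i,i)$ lies in the SW-submatrix of $(a,b)$ and on the diagonal, hence in $L(w)\cap\Delta^{\low}$, so the edge $i\to i^*$ is actually in $G_w^{\sym}$, a contradiction. The symmetric argument preserves each column vertex $j^*$. For claim~(ii), it suffices to show that every edge $i\to j^*$ arising from $(i,j)\in L(w)\cap\Delta^{\textrm{up}}$ (so $i<j$) has its two endpoints already linked in $G_w^{\sym}$. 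Given such $(i,j)$ witnessed by an essential box $(a,b)$ with $a\le i<j\le b$, all diagonal cells $\{(k,k):k\in[i,j]\}$ lie in $L(w)\cap\Delta^{\low}$, producing edges $k\to k^*$ in $G_w^{\sym}$; the natural bridge between consecutive diagonal edges is the subdiagonal cell $(k{+}1,k)$, which always belongs to the SW-submatrix of $(a,b)$ and to $\Delta^{\low}$.

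The main obstacle is that $(k{+}1,k)$ can be absorbed into $\dom(w)$, and this happens exactly when $\dom(w)$ meets the subdiagonal. If $\dom(w)$ is disjoint from the subdiagonal, every bridge $(k{+}1,k)$ lies in $L(w)\cap\Delta^{\low}$, chaining the diagonal edges directly and linking $i$ to $j^*$ in $G_w^{\sym}$. Otherwise, at any NE corner $(a,b)$ of $\dom(w)$ with $b=a-1$, the counting inequality from the second paragraph becomes the equality $(n-a+1)+b=n$, forcing all $1$s outside $w_{[a,b]}$ into the NW rectangle $[1,a-1]^2$ and the SE rectangle $[a,n]^2$; hence $w$ is block-diagonal across position $a$. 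In that case both $G_w$ and $G_w^{\sym}$ decompose as disjoint unions of the graphs of the two smaller blocks $w_1\in S_{a-1}$ and $w_2\in S_{n-a+1}$, and an induction on $n$ completes the argument.
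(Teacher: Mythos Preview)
Your argument is correct and follows essentially the same route as the paper's: both reduce to showing that deleting the edges coming from $L(w)\cap\Delta^{\textrm{up}}$ does not change the connectivity of $G_w$, and both hinge on the same counting argument at a NE corner $(a,b)$ of $\dom(w)$ (forcing $b\le a-1$, so $\dom(w)$ lies strictly below the diagonal). The paper phrases things in terms of connected components of the region $L(w)$ rather than explicit paths in $G_w$, and in the boundary case $b=a-1$ it observes directly that the parts of $L(w)$ north versus east of $(a,b)$ were already in distinct components, whereas you draw the same block-diagonal splitting of $w$ and close by induction on $n$. Your version is a bit more explicit in two places: you verify separately that no vertex of $G_w$ becomes isolated (your claim~(i), which the paper leaves implicit), and you exhibit the concrete path through diagonal cells $(k,k)$ and subdiagonal bridges $(k{+}1,k)$, which makes transparent the paper's unproven assertion that a disconnection can only occur when $\dom(w)$ meets the subdiagonal.
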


\noindent Thus, the following corollary is immediate. 

\begin{corollary}
If $d$ is the complexity of $Y_w$, then the complexity of $Y^{\textrm{low}}_w$ and $Y^{\textrm{sym}}_w$ are given by $$d- \vert \SW(w)\cap\Delta^{\textrm{up}}\vert.$$
\end{corollary}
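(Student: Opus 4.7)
The plan is to observe that this corollary follows directly from combining the two preceding results, namely the dimension formula $\dim(Y^{\low}_w)=\dim(Y^{\sym}_w)=\dim(Y_w) - |\SW(w)\cap\Delta^{\textrm{up}}|$ and the equality of weight cone dimensions $\dim \sigma^{\sym}_{w} = \dim \sigma^{\low}_{w} = \dim \sigma_{w}$.

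First, I would recall the definition of complexity. By definition, the complexity of $Y_w$ under the usual torus action is $d = \dim(Y_w) - \dim(\sigma_w)$, and analogously the complexity of $Y^{\low}_w$ (respectively $Y^{\sym}_w$) under its torus action is $\dim(Y^{\low}_w) - \dim(\sigma^{\low}_w)$ (respectively $\dim(Y^{\sym}_w) - \dim(\sigma^{\sym}_w)$).

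Next, I would simply substitute. Applying the dimension formula for $Y^{\low}_w$ and $Y^{\sym}_w$ together with the equality of the weight cone dimensions, we obtain
\begin{align*}
\dim(Y^{\low}_w) - \dim(\sigma^{\low}_w) &= \bigl(\dim(Y_w) - |\SW(w)\cap\Delta^{\textrm{up}}|\bigr) - \dim(\sigma_w) \\
&= d - |\SW(w)\cap\Delta^{\textrm{up}}|,
\end{align*}
and analogously for $Y^{\sym}_w$. This concludes the argument. Since the heavy lifting was done in establishing the two preceding results, there is no real obstacle here; the only thing to ensure is that the torus actions on $Y^{\low}_w$ and $Y^{\sym}_w$ are indeed effective (or at least that we are working with the induced effective quotient), so that complexity can genuinely be computed as the difference between variety dimension and weight cone dimension, as noted in the background section on $T$-varieties.
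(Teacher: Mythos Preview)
Your proposal is correct and matches the paper's approach exactly: the paper states this corollary as ``immediate'' from the two preceding results without giving any further argument, and you have simply written out that immediate deduction explicitly.
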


\noindent It has been shown in \cite[Theorem 3.14, Theorem 3.15]{donten2021complexity} that there exist $T \times T$-varieties of every complexity $d$ except of complexity $d=1$. For symmetric and lower triangular matrix Schubert varieties, this exception does not exist, as the following Proposition shows.

\begin{proposition}
There exist $T \times T$-varieties $Y_w^{\sym}$ and $Y_w^{\low}$ of complexity $d$ for every $d\in \mathbb N_0$.
\begin{proof}
We know that there exist toric $T \times T$-varieties $Y_{v}$ and
by \cite[Theorem 3.15]{donten2021complexity}, we also know that there exist $Y_{v}$ of complexity $d$ for every $d \geq 2$. If $v \in S_{ m}$, take $n=2m$ and $w = (v_1 + m, \ldots , v_m+m, \,\,\, m, m-1, \ldots, 1)$ such that $\SW(w) \cap \Delta^{\textrm{up}} = \emptyset$ and the complexities of $Y_{v}$ and $Y_w^{\sym}$ are the same. Also, 
$Y^{\sym}_{3412}$ has complexity $1$.
\end{proof}
\end{proposition}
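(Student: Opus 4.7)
The plan is to invoke the preceding corollary, which says that the complexity of $Y_w^{\sym}$ (and equally of $Y_w^{\low}$) equals the complexity of $Y_w$ minus $|\SW(w) \cap \Delta^{\textrm{up}}|$. Hence for each $d \geq 0$ it suffices to exhibit a permutation $w$ with $\SW(w) \cap \Delta^{\textrm{up}} = \emptyset$ and with $Y_w$ of complexity $d$; the same $w$ will then realise complexity $d$ for both $Y_w^{\sym}$ and $Y_w^{\low}$.

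For $d = 0$ and every $d \geq 2$, a permutation $v \in S_m$ with $Y_v$ of complexity $d$ is available by \cite[Theorems 3.14, 3.15]{donten2021complexity}. Following the author, I would embed $v$ into $S_{2m}$ via the permutation $w = (v(1) + m, \ldots, v(m) + m, m, m-1, \ldots, 1)$ in one-line notation; its matrix carries the $v$-pattern in the lower-left $m \times m$ block and an antidiagonal in the upper-right block. A direct check from the definition of the opposite Rothe diagram (handling the four $m\times m$ blocks separately) yields $\Dc(w) = \{(i+m, j) : (i,j) \in \Dc(v)\}$; in particular $\Dc(w)$ sits inside the strictly lower-left $m \times m$ block, which lies entirely in $\Delta^{\textrm{low}}$. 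Consequently $\Ess(w)$ and $\SW(w)$ also sit in that block, so $\SW(w) \cap \Delta^{\textrm{up}} = \emptyset$. The same shift gives a graph isomorphism $G_v \to G_w$ and takes $\dom(v), L(v), L'(v)$ to $\dom(w), L(w), L'(w)$, so by Lemma~\ref{lem:weightcondim} we get $\dim Y_w = \dim Y_v$ and $\dim \sigma_w = \dim \sigma_v$, hence the complexities of $Y_w$ and $Y_v$ agree. The corollary then delivers complexity $d$ for $Y_w^{\sym}$ and $Y_w^{\low}$.

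For $d = 1$ this embedding cannot help, because no matrix Schubert variety $Y_v$ has complexity $1$. Instead I would verify the claim for $w = 3412 \in S_4$ by direct computation: $\Dc(3412) = \{(2,3),(4,1)\}$, $\SW(3412)$ is the full $3\times 3$ block with rows $2,3,4$ and columns $1,2,3$, and $\dom(3412) = \{(4,1)\}$, so $L(3412)$ has $8$ elements. The bipartite DAG $G_{3412}$ on $\{2,3,4\}\sqcup\{1^*,2^*,3^*\}$ is connected with $8$ edges, giving $\dim \sigma_{3412} = 5$ while $\dim Y_{3412} = 7$, so $Y_{3412}$ has complexity $2$. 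The unique element of $\SW(3412)$ above the diagonal is $(2,3)$, and the corollary yields complexity $1$ for both $Y_{3412}^{\sym}$ and $Y_{3412}^{\low}$. The main obstacle is the bookkeeping in the embedding step, namely a block-by-block verification that $\Dc(w)$ is exactly the shifted copy of $\Dc(v)$ and that this shift respects all of the combinatorial ingredients (essential set, $\SW$, $\dom$, and bipartite graph); once that is done the conclusion is immediate.
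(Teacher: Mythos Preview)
Your proposal is correct and follows essentially the same approach as the paper: the same embedding $w=(v(1)+m,\ldots,v(m)+m,\,m,\ldots,1)$ for $d\neq 1$ and the same explicit example $w=3412$ for $d=1$. You simply supply more of the verification details (the block-by-block analysis of $\Dc(w)$ and the direct computation for $3412$) that the paper leaves implicit.
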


\subsubsection{Matrix Schubert CI models}

Conditional independence ideals that are also matrix Schubert ideals are classified in \cite{fink2016matrix}. We are interested in their complexity, but first state the original result for a single CI statement.

\begin{proposition}[{\cite[Proposition 4.1]{fink2016matrix}}] \label{prop:one CI}
    Let $X \sim \mathcal N (\mu, \Sigma)$ be an $m$-dimensional Gaussian random vector and let $A \indep B \mid C$ be a CI statement. Then the CI ideal $J_{A \indep B \mid C} \subset \mathbb C [\Sigma]$ describes the symmetric matrix Schubert variety $X_w^{\sym}$ if and only if one of the following two conditions is satisfied:
    \begin{enumerate}
        \item $A= [1,i]$, $B= [j,n]$ and $C= \emptyset$ for some $i<j$ and \\
        $w= (j-1, \ldots, j-i, \,\,\, n, \ldots, j, \,\,\, j-i-1, \ldots, 1)$
        \item $A=[1,i]$, $B=[j,n]$ and $C=[i+1,j-1]$ for some $i<j-1$ and \\
        $w = (n, \ldots, n-j+i+2, \,\,\, i , \ldots, 1, \,\,\, n-j+i+1, \ldots, i+1)$
    \end{enumerate}
\end{proposition}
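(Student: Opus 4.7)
The approach is to directly match the generators of $J_{A \indep B \mid C}$ with Fulton's determinantal conditions defining $I_w^{\sym}$, exploiting the symmetry $\Sigma = \Sigma^T$ throughout.

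For the backward direction, for each of the two cases I would first compute the permutation matrix of the stated $w$ and read off its opposite Rothe diagram. In Case~1 one verifies that $\Dc(w)$ is the rectangle $[j,n]\times[1,i]$, so $\Ess(w)=\{(j,i)\}$ with $r_w(j,i)=0$; in Case~2 a direct check yields $\Ess(w)=\{(i+1,j-1)\}$ with $r_w(i+1,j-1)=j-i-1=|C|$ (the rank is obtained just by counting $1$s of $w$ in the relevant submatrix). By Theorem~\ref{thm:fultonessential}, $I_w^{\sym}$ is then generated by the $(r_w(a,b)+1)$-minors of $\Sigma_{[a,n],[1,b]}$, where $(a,b)$ is this unique essential position. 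Since $\Sigma=\Sigma^T$, these minors coincide (up to sign and transposition of the index sets) with the $(|C|+1)$-minors of $\Sigma_{[1,b],[a,n]}$; noting that $[1,b]=A\cup C$ and $[a,n]=B\cup C$ in both cases identifies this ideal with $J_{A\indep B\mid C}$.

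For the forward direction, assume $J_{A \indep B \mid C}$ describes $X_w^{\sym}$. Since the CI variety is cut out by a single rank condition on the submatrix $\Sigma_{A\cup C, B\cup C}$, and since the Fulton conditions at distinct essential positions of $w$ are algebraically independent and minimal, one must have $\Ess(w)=\{(a,b)\}$ for a single box $(a,b)$, and $\Sigma_{[a,n],[1,b]}$ (or its transpose, via symmetry) must coincide with $\Sigma_{A\cup C, B\cup C}$. After possibly swapping $A$ and $B$ (which leaves the CI statement invariant), assume $A\cup C=[1,b]$ and $B\cup C=[a,n]$. The disjointness of $A$, $B$, $C$ then leaves exactly two possibilities: if $a>b$ then $[1,b]\cap[a,n]=\emptyset$, forcing $C=\emptyset$ and yielding Case~1 with $i=b$, $j=a$; if $a\leq b$ then $[a,b]\subseteq C\subseteq (A\cup C)\cap (B\cup C)=[a,b]$, yielding Case~2 with $i=a-1$, $j=b+1$. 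Finally, the rank condition $r_w(a,b)=|C|$ pins down the shape of the permutation, matching the explicit $w$ stated in each case.

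The main obstacle will be to rigorously justify in the forward direction that $|\Ess(w)|=1$ and that the Fulton submatrix must coincide with the CI submatrix. One route is through minimal generators: since the matrix Schubert ideal has a minimal generating set indexed by essential boxes, and the CI ideal has a minimal generating set of minors of a single submatrix, the two can only agree if the essential set is a singleton and the submatrices match. Alternatively, one can compare via primary decomposition, using that both ideals are prime of known codimension, so that equality of varieties forces equality of the underlying submatrix structure.
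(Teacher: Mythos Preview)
The paper does not give its own proof of this proposition: it is quoted verbatim from \cite[Proposition~4.1]{fink2016matrix} and then used as input for the complexity computation in Proposition~\ref{prop: complexity Mschu}. So there is nothing to compare against here.

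That said, your outline is sound for the backward direction (it is indeed just a check of the opposite Rothe diagram and the unique essential box), but the forward direction has a real gap. You silently assume that $A\cup C$ and $B\cup C$ are already intervals of the form $[1,b]$ and $[a,n]$. The whole content of the forward implication is that \emph{arbitrary} disjoint subsets $A,B,C\subset[n]$ are forced into this interval shape. Your sentence ``the Fulton submatrix must coincide with the CI submatrix'' is exactly the step that needs work: you must argue that the ideal of $(|C|+1)$-minors of $\Sigma_{A\cup C,\,B\cup C}$ can equal the ideal of $(r+1)$-minors of a \emph{contiguous corner} submatrix $\Sigma_{[a,n],[1,b]}$ only when the index sets themselves match (up to the symmetry $\Sigma=\Sigma^T$). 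This is believable for a generic symmetric matrix, but it is not automatic, and your two suggested routes (minimal generators, or primary decomposition) do not by themselves force the index sets to be intervals. A cleaner argument compares the supports of the generators: the variables $\sigma_{kl}$ appearing in $J_{A\indep B\mid C}$ are exactly those with $k\in A\cup C$, $l\in B\cup C$, whereas the variables appearing in $I_w^{\sym}$ lie in the corner $[a,n]\times[1,b]$; matching these supports is what pins $A\cup C$ and $B\cup C$ down as intervals.

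A second, smaller point: ``the rank condition $r_w(a,b)=|C|$ pins down the shape of the permutation'' is not literally true, since several permutations share the same essential set and rank function and hence the same matrix Schubert ideal. What is true is that among all such $w$ there is a canonical (Bruhat-maximal) choice, and the displayed $w$ in each case is that one; you should say so explicitly.
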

\noindent We can now state the complexity of $Y^{\sym}_w$ that is associated to a CI statement, in terms of the $|A|,|B|$ and $|C|$ given by the statement.
\begin{proposition}\label{prop: complexity Mschu}
    Let $w \in S_n$ be such that $\overline{X^{\sym}_w}$ is a CI variety coming from a CI statement $A \indep B \mid C$, as in Proposition~\ref{prop:one CI}. If $C= \emptyset$, then $Y^{\sym}_w$ has complexity $0$. Otherwise, $Y^{\sym}_w$ has complexity $(|C|-1)(n-1-\frac{|C|}{2}) $.
    \begin{proof}
    If $C = \emptyset$ then $\dim Y^{\sym}_w = |L'(w)| = 0$ and $\dim \sigma^{\sym}_w = 0$. Otherwise $A \cup B \cup C = [n]$ and the rectangle $\SW(w)$ has an overlap over the diagonal of height and width $(|A|+|C|)-|A|=|C|$. Thus
        $$\dim Y^{\sym}_w = |L'(w)| - |\SW(w) \cap \Delta^{\textrm{up}} |
                          = |B||C| + |C|^2 + |A||C| - \frac{(|C|-1)|C|}{2}
                          = n |C| -\frac{(|C|-1)|C|}{2},
                          $$        
        $$\dim \sigma^{\sym}_w = |A \cup C| + |B \cup C| -1 = n+ |C|-1. $$ \qedhere  \end{proof}
\end{proposition}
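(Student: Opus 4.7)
The plan is to determine the exact shape of $\Dc(w)$ and $\SW(w)$ in each of the two CI cases, whereupon the proposition follows from a combinatorial count together with the dimension formula for $Y^{\sym}_w$ from the preceding corollary.

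For Case 1 ($C=\emptyset$), the permutation $w = (j-1,\ldots,j-i,\, n,\ldots,j,\, j-i-1,\ldots,1)$ consists of three descending anti-diagonal blocks whose value ranges $[j-i,j-1]$, $[j,n]$, $[1,j-i-1]$ partition $[n]$. Checking the defining Rothe conditions $w(b)<a$ and $w^{-1}(a)>b$ block by block yields $\Dc(w) = [j,n]\times[1,i]$, a single rectangle that coincides with both $\SW(w)$ and $\dom(w)$ (the latter because it contains $(n,1)$). Hence $L'(w) = L(w) = \emptyset$, so $\dim Y^{\sym}_w = 0$ and $G^{\sym}_w$ has no edges; the complexity is $0$.

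For Case 2, write $w = (n,\ldots,n{-}|C|{+}1,\; |A|,\ldots,1,\; n{-}|C|,\ldots,|A|{+}1)$, using $|A|=i$, $|B|=n-j+1$, $|C|=j-1-i$. The central technical step, and the main obstacle of the proof, is to show, by analyzing the Rothe conditions separately on each of the three column blocks, that
\[\Dc(w) \;=\; [|A|+1,\,|A|+|B|]\times[|C|+1,\,|C|+|A|],\]
a single rectangle of shape $|B|\times|A|$ with northeast corner $(|A|+1,\,|A|+|C|)$. From this it follows that $\SW(w) = [|A|+1,n]\times[1,|A|+|C|]$, and since $w(1)=n$ forces $(n,1)\notin\Dc(w)$, the dominant piece $\dom(w)$ is empty, so $L(w)=\SW(w)$.

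The remainder is routine counting. One obtains $|L'(w)| = |\SW(w)|-|\Dc(w)| = (|B|+|C|)(|A|+|C|)-|A||B| = |C|\,n$, and observes that $\SW(w)\cap\Delta^{\textrm{up}}$ is exactly the strict upper triangle of the $|C|\times|C|$ square $[|A|+1,\,|A|+|C|]^{2}$, hence of size $\binom{|C|}{2}$; the preceding corollary then gives $\dim Y^{\sym}_w = n|C|-\tfrac{(|C|-1)|C|}{2}$. For the weight cone, $G^{\sym}_w$ has left vertex set $[|A|+1,n]$ and right vertex set $[1,|A|+|C|]^{*}$, totaling $n+|C|$ vertices, and it is connected because every right vertex $b^{*}$ is adjacent to $n$ (since $(n,b)\in L(w)\cap\Delta^{\textrm{low}}$ as $n\geq b$) and every left vertex $a$ is adjacent to $1^{*}$ (since $(a,1)\in L(w)\cap\Delta^{\textrm{low}}$ as $a\geq 1$). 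Lemma \ref{lem:weightcondim} then gives $\dim\sigma^{\sym}_w = n+|C|-1$, and subtracting produces
\[n|C|-\tfrac{(|C|-1)|C|}{2}-(n+|C|-1) \;=\; (|C|-1)\!\left(n-1-\tfrac{|C|}{2}\right),\]
the claimed complexity.
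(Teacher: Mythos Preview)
Your proof is correct and follows essentially the same approach as the paper: compute $\dim Y^{\sym}_w$ via $|L'(w)|-|\SW(w)\cap\Delta^{\textrm{up}}|$ and $\dim\sigma^{\sym}_w$ via the vertex count of $G^{\sym}_w$ minus its number of connected components, then subtract. You supply considerably more detail than the paper---in particular the explicit determination of $\Dc(w)$ as a rectangle in each case and the verification that $G^{\sym}_w$ is connected via the edges $(n,b)$ and $(a,1)$---whereas the paper simply asserts the relevant counts (e.g.\ $|L'(w)|=|B||C|+|C|^2+|A||C|$ and $\dim\sigma^{\sym}_w=|A\cup C|+|B\cup C|-1$) without justification.
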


\begin{corollary}
    Let $w \in S_n$ such that $\overline{X^{\sym}_w}$ is a CI variety coming from a CI statement $A \indep B \mid C$ as in Proposition~\ref{prop:one CI}. Then $Y^{\sym}_w$ is toric if and only if $|C| \in \{0,1\}$.
    
\end{corollary}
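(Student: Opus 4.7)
The plan is to read this off directly from Proposition~\ref{prop: complexity Mschu}, since $Y_w^{\sym}$ is toric precisely when its complexity is zero. First I would handle the easy cases: if $|C|=0$ then Proposition~\ref{prop: complexity Mschu} already asserts complexity zero, and if $|C|=1$ then the factor $(|C|-1)=0$ forces the product $(|C|-1)(n-1-\tfrac{|C|}{2})$ to vanish as well.

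The remaining direction is to show that $|C|\geq 2$ forces strictly positive complexity, i.e.\ that both factors in $(|C|-1)(n-1-\tfrac{|C|}{2})$ are strictly positive. The factor $|C|-1$ is clearly positive. For the other factor, I would use the combinatorial setup of Proposition~\ref{prop:one CI}: in the case $C\neq\emptyset$, the sets $A=[1,i]$, $B=[j,n]$, $C=[i+1,j-1]$ form a partition of $[n]$ with $|A|,|B|\geq 1$, so $|C|\leq n-2$. Substituting gives
\[
n-1-\tfrac{|C|}{2} \;\geq\; n-1-\tfrac{n-2}{2} \;=\; \tfrac{n}{2} \;>\; 0,
\]
which shows that the complexity is strictly positive when $|C|\geq 2$.

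There is no real obstacle here: the content is entirely packaged in Proposition~\ref{prop: complexity Mschu}, and the only nontrivial check is the bound $|C|\leq n-2$, which is a direct consequence of the form of $A$ and $B$ in Proposition~\ref{prop:one CI}. I would therefore present the argument as a short two-case analysis with the inequality above as the sole computation.
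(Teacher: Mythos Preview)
Your argument is correct and is exactly the intended derivation: the paper states this corollary without proof, as an immediate consequence of Proposition~\ref{prop: complexity Mschu}, and your case analysis together with the bound $|C|\leq n-2$ (from $|A|,|B|\geq 1$ in Proposition~\ref{prop:one CI}) is precisely what is needed to unpack it.
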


\noindent In this case, a single conditional independence statement $A \indep B \mid C$ can be represented in two distinct ways. When $|C|=0$, it arises from the global Markov property of the union of a complete graph on the vertices in $A$ and a complete graph on the vertices in $B$. When $|C|=1$, it can instead be represented by a block graph i.e.\ 1-clique sum of complete graphs in which the two parts are connected through the vertex $C$. The toricness (not necessarily w.r.t. the usual torus action) of the former case is immediate, while in the latter case it follows directly from the irreducibility of $Y^{\sym}_w$ and \cite[Theorem 1.1]{biaggi2025binomiality} and \cite[Theorem 5]{misra2021gaussian}.

\subsubsection{Kazhdan-Lusztig CI models}

\noindent Following the idea behind Proposition \ref{prop:one CI}, one can also find a correspondence between families of KL varieties and families of CI varieties. 

\begin{lemma}\label{KL CI}
    Let $X \sim \mathcal N (\mu, \Sigma)$ be an $m$-dimensional Gaussian random vector and let $A \indep B \mid C$ be a CI statement. Then, the variety of the CI ideal $J_{A \indep B \mid C} \subset \mathbb C [\Sigma]$ is isomorphic to a KL variety $\Nvw$ if
$$
v  = (n-m, \ldots, n, \,\,\, n-m-1, \ldots, 1),
$$
and one of the following two conditions is satisfied:

\begin{enumerate}
\item $w = (n-l, \ldots, n-l-k+1, \,\,\, n, \ldots, n-l+1, \,\,\, n-l-k, \ldots, 1)$
where $l+k \leq m $ and $A=[1,k]$, $B=[m-l+1,m]$, $C= \emptyset$;

\item $w= (n, n-1-t, \ldots, n-t-s , \,\,\, n-1, \ldots, n-t, \,\,\, n-t-s-1, \ldots, 1) $
where $l+k=m+1$ for $k:= 1+s$ and $l:= 1+t$ and 
$A=[1,k-1]$, $B=[k+1,m]$, $C=\{k\}$.
\end{enumerate}
\begin{proof}
The variables $z_{ij}$ of $\Zg$ lie below the diagonal of the southwest $(m+1) \times (m+1)$ submatrix of $\Zg$. We identify this triangular part of $\Zg$ with the lower left part of the symmetric covariance matrix $\Sigma$, where the diagonal of $\Sigma$ comes from the subdiagonal of the submatrix. If $l+k \leq m$, then the position of $\SW(w)$ in $\Sigma$ does not intersect its diagonal. Therefore, for $A$, $B$, and $C$, as given in (1), the CI statement imposes the same rank 0 conditions on $\Sigma$ as $\Ess(w)$. If $l+k = m+1$, then the position of $\SW(w)$ in $\Sigma$ intersects its diagonal with an overlap of one, and therefore for $A$, $B$ and $C$, as given in (2), the CI statement imposes the same rank one conditions on $\Sigma$ as $\Ess(w)$. 
\end{proof}

\begin{figure}[H]
\begin{minipage}[c]{0.33\textwidth}
\begin{tikzpicture}[scale = 0.45]

\node at (-1.5,2.5) {{$v =$}};

    \draw[line width = 0.3mm] (0.2,-0.2) -- (-0.2,-0.2) -- (-0.2,5.2) -- (0.2,5.2);
    \draw[line width = 0.3mm] (4.8,5.2) -- (5.2,5.2) -- (5.2,-0.2) -- (4.8,-0.2);

    \draw[line width = 0.1mm, Gray] (3,0) -- (3,5);
    \draw[line width = 0.1mm, Gray] (0,3) -- (5,3);

   \draw [line width = 0.0mm, draw=magenta!60, fill=magenta!60, draw opacity = 0.5, fill opacity=0.5]
       (0,2) -- (0,2.5) -- (0.5,2.5) -- (0.5,2)  -- cycle;
    \draw [line width = 0.0mm, draw=magenta!60, fill=magenta!60, draw opacity = 0.5, fill opacity=0.5]
       (0.5,1.5) -- (0.5,2) -- (1,2) -- (1,1.5)  -- cycle;

    \draw [line width = 0.0mm, draw=magenta!60, fill=magenta!60, draw opacity = 0.5, fill opacity=0.5]
       (2.5,0.5) -- (2.5,0) -- (2,0) -- (2,0.5)  -- cycle;
    \draw [line width = 0.0mm, draw=magenta!60, fill=magenta!60, draw opacity = 0.5, fill opacity=0.5]
       (1.5,0.5) -- (1.5,1) -- (2,1) -- (2,0.5)  -- cycle;

    \node at (0.25,2.75) {\tiny{$1$}}; 
     \node at (2.75,0.25) {\tiny{$1$}};
     \node at (0.75,2.25) {\tiny{$1$}}; 
     \node at (2.25,0.75) {\tiny{$1$}};
     \node at (3.25,3.25) {\tiny{$1$}};
     \node at (4.75,4.75) {\tiny{$1$}};
     \node at (1.5,1.5) {\tiny{$\ddots$}};
     \node at (4.1,4.2) {\tiny{$\iddots$}};

     \draw[line width = 0.3mm, Green] (2.5,0) -- (2.5,2.5) -- (0,2.5) -- (0,0) -- cycle;

     \draw [decorate, decoration = {calligraphic brace, amplitude=2.5pt, mirror}, line width = 0.3mm] (5.35,0.05) --  (5.35,2.45)
    node[midway, right]{\tiny $m$};
    \draw [decorate, decoration = {calligraphic brace, amplitude=2.5pt, mirror}, line width = 0.3mm] (5.35,3.05) --  (5.35,4.95)
    node[midway, right]{\tiny $n-m-1$};

\end{tikzpicture}

\end{minipage}
\begin{minipage}[c]{0.65\textwidth}
\begin{minipage}[c]{0.1\textwidth}
(1)
\end{minipage}
\begin{minipage}[c]{0.48\textwidth}

\begin{tikzpicture}[scale = 0.45]

\node at (-1.5,2.5) {{$w =$}};

    \draw[line width = 0.3mm] (0.2,-0.2) -- (-0.2,-0.2) -- (-0.2,5.2) -- (0.2,5.2);
    \draw[line width = 0.3mm] (4.8,5.2) -- (5.2,5.2) -- (5.2,-0.2) -- (4.8,-0.2);

    \draw[line width = 0.1mm, Gray] (1.75,0) -- (1.75,5);
    \draw[line width = 0.1mm, Gray] (3.5,0) -- (3.5,5);
    \draw[line width = 0.1mm, Gray] (0,1.75) -- (5,1.75);
    \draw[line width = 0.1mm, Gray] (0,3.5) -- (5,3.5);

    \draw [line width = 0.0mm, draw=CornflowerBlue, fill=CornflowerBlue, draw opacity = 0.3, fill opacity=0.3]
       (0,0) -- (0,1.75) -- (1.75,1.75) -- (1.75,0) -- cycle;

    \draw[line width = 0.4mm, draw=magenta]
     (0, 1.75) -- (1.75,1.75) -- (1.75,0);

    \node at (0.25,2) {\tiny{$1$}}; 
    \node at (1.5,3.25) {\tiny{$1$}};
    \node at (0.9,2.7) {\tiny{$\iddots$}};

    \node at (2,0.25) {\tiny{$1$}}; 
    \node at (3.25,1.5) {\tiny{$1$}};
    \node at (2.7,1) {\tiny{$\iddots$}};

    \node at (3.75,3.75) {\tiny{$1$}}; 
    \node at (4.75,4.75) {\tiny{$1$}};
    \node at (4.25,4.35) {\tiny{$\iddots$}};

    \draw [decorate, decoration = {calligraphic brace, amplitude=2.5pt, mirror}, line width = 0.3mm] (5.35,0.05) --  (5.35,1.7)
    node[midway, right]{\tiny $l$};
    
    \draw [decorate, decoration = {calligraphic brace, amplitude=2.5pt, mirror}, line width = 0.3mm] (5.35,1.8) --  (5.35,3.45)
    node[midway, right]{\tiny $k$};

    \draw [decorate, decoration = {calligraphic brace, amplitude=2.5pt, mirror}, line width = 0.3mm] (5.35,3.55) -- (5.35,4.95)
    node[midway, right]{\tiny $n-k-l$};

\end{tikzpicture}

\end{minipage}
\begin{minipage}[c]{0.4 \textwidth}

\begin{tikzpicture}[scale = 0.45]

\node at (-1.5,1.75) {{$\Sigma =$}};

    \draw[line width = 0.3mm] (0.2,-0.2) -- (-0.2,-0.2) -- (-0.2,3.7) -- (0.2,3.7);
    \draw[line width = 0.3mm] (3.3,3.7) -- (3.7,3.7) -- (3.7,-0.2) -- (3.3,-0.2);

    \draw[line width = 0.1mm, Gray] (1.5,0) -- (1.5,3.5);
    \draw[line width = 0.1mm, Gray] (2,0) -- (2,3.5);
    \draw[line width = 0.1mm, Gray] (0,1.5) -- (3.5,1.5);
    \draw[line width = 0.1mm, Gray] (0,2) -- (3.5,2);

    \draw [line width = 0.0mm, draw=magenta!60, fill=magenta!60, draw opacity = 0.5, fill opacity=0.5]
       (0,3) -- (0,3.5) -- (0.5,3.5) -- (0.5,3)  -- cycle;
   \draw [line width = 0.0mm, draw=magenta!60, fill=magenta!60, draw opacity = 0.5, fill opacity=0.5]
       (1.5,1.5) -- (1.5,2) -- (2,2) -- (2,1.5)  -- cycle;
    \draw [line width = 0.0mm, draw=magenta!60, fill=magenta!60, draw opacity = 0.5, fill opacity=0.5]
       (3,0) -- (3.5,0) -- (3.5,0.5) -- (3,0.5)  -- cycle;
    \draw [line width = 0.0mm, draw=magenta!60, fill=magenta!60, draw opacity = 0.5, fill opacity=0.5]
       (0,3) -- (0,3.5) -- (0.5,3.5) -- (0.5,3)  -- cycle;
    \draw [line width = 0.0mm, draw=magenta!60, fill=magenta!60, draw opacity = 0.5, fill opacity=0.5]
       (1,2.5) -- (1,2) -- (1.5,2) -- (1.5,2.5)  -- cycle;
    \draw [line width = 0.0mm, draw=magenta!60, fill=magenta!60, draw opacity = 0.5, fill opacity=0.5]
    (2,1.5) -- (2,1) -- (2.5,1) -- (2.5,1.5)  -- cycle;

    \node at (0.75,2.95) {\tiny{$\ddots$}};
    \node at (2.75,0.95) {\tiny{$\ddots$}};

    \draw[line width = 0.4mm, draw=magenta]
     (0, 1.5) -- (1.5,1.5) -- (1.5,0);

   \draw [decorate, decoration = {calligraphic brace, amplitude=2.5pt, mirror}, line width = 0.3mm] (3.85,0.05) --  (3.85,1.4)
    node[midway, right]{\tiny $B$};

    \draw [decorate, decoration = {calligraphic brace, amplitude=2.5pt, mirror}, line width = 0.3mm] (3.85,2.1) -- (3.85,3.45)
    node[midway, right]{\tiny $A$};

\end{tikzpicture}

\end{minipage}
\vspace{0.2cm}

\begin{minipage}[c]{0.1\textwidth}
(2)
\end{minipage}
\begin{minipage}[c]{0.48\textwidth}
    
\begin{tikzpicture}[scale = 0.45]

\node at (-1.5,2.5) {{$w =$}};

    \draw[line width = 0.3mm] (0.2,-0.2) -- (-0.2,-0.2) -- (-0.2,5.2) -- (0.2,5.2);
    \draw[line width = 0.3mm] (4.8,5.2) -- (5.2,5.2) -- (5.2,-0.2) -- (4.8,-0.2);

     \draw[line width = 0.1mm, Gray] (0.5,0) -- (0.5,5);
     \draw[line width = 0.1mm, Gray] (0,0.5) -- (5,0.5);
    \draw[line width = 0.1mm, Gray] (2,0) -- (2,5);
    \draw[line width = 0.1mm, Gray] (3.5,0) -- (3.5,5);
    \draw[line width = 0.1mm, Gray] (0,2) -- (5,2);
    \draw[line width = 0.1mm, Gray] (0,3.5) -- (5,3.5);

    \draw [line width = 0.0mm, draw=CornflowerBlue, fill=CornflowerBlue, draw opacity = 0.3, fill opacity=0.3]
       (0.5,0.5) -- (0.5,2) -- (2,2) -- (2,0.5) -- cycle;
    \draw[line width = 0.4mm, draw=magenta]
     (0, 2) -- (2,2) -- (2,0);

     \node at (0.25,0.25) {\tiny{$1$}};
    
    \node at (0.75,2.25) {\tiny{$1$}}; 
    \node at (1.75,3.25) {\tiny{$1$}};
    \node at (1.25,2.8) {\tiny{$\iddots$}};

    \node at (2.25,0.75) {\tiny{$1$}}; 
    \node at (3.25,1.75) {\tiny{$1$}};
    \node at (2.75,1.35) {\tiny{$\iddots$}};

    \node at (3.75,3.75) {\tiny{$1$}}; 
    \node at (4.75,4.75) {\tiny{$1$}};
    \node at (4.25,4.35) {\tiny{$\iddots$}};

    \draw [decorate, decoration = {calligraphic brace, amplitude=2.5pt, mirror}, line width = 0.3mm] (5.35,0.55) --  (5.35,1.95)
    node[midway, right]{\tiny $t$};
    
    \draw [decorate, decoration = {calligraphic brace, amplitude=2.5pt, mirror}, line width = 0.3mm] (5.35,2.05) --  (5.35,3.45)
    node[midway, right]{\tiny $s$};

    \draw [decorate, decoration = {calligraphic brace, amplitude=2.5pt, mirror}, line width = 0.3mm] (5.35,3.55) -- (5.35,4.95)
    node[midway, right]{\tiny $n-l-s$};

\end{tikzpicture}

\end{minipage}
\begin{minipage}[c]{0.4\textwidth}

\begin{tikzpicture}[scale = 0.45]

\node at (-1.5,1.75) {{$\Sigma =$}};

    \draw[line width = 0.3mm] (0.2,-0.2) -- (-0.2,-0.2) -- (-0.2,3.7) -- (0.2,3.7);
    \draw[line width = 0.3mm] (3.3,3.7) -- (3.7,3.7) -- (3.7,-0.2) -- (3.3,-0.2);

    \draw[line width = 0.1mm, Gray] (1.5,0) -- (1.5,3.5);
    \draw[line width = 0.1mm, Gray] (2,0) -- (2,3.5);
    \draw[line width = 0.1mm, Gray] (0,1.5) -- (3.5,1.5);
    \draw[line width = 0.1mm, Gray] (0,2) -- (3.5,2);

    \draw [line width = 0.0mm, draw=magenta!60, fill=magenta!60, draw opacity = 0.5, fill opacity=0.5]
       (0,3) -- (0,3.5) -- (0.5,3.5) -- (0.5,3)  -- cycle;
   \draw [line width = 0.0mm, draw=magenta!60, fill=magenta!60, draw opacity = 0.5, fill opacity=0.5]
       (1.5,1.5) -- (1.5,2) -- (2,2) -- (2,1.5)  -- cycle;
    \draw [line width = 0.0mm, draw=magenta!60, fill=magenta!60, draw opacity = 0.5, fill opacity=0.5]
       (3,0) -- (3.5,0) -- (3.5,0.5) -- (3,0.5)  -- cycle;
    \draw [line width = 0.0mm, draw=magenta!60, fill=magenta!60, draw opacity = 0.5, fill opacity=0.5]
       (0,3) -- (0,3.5) -- (0.5,3.5) -- (0.5,3)  -- cycle;
    \draw [line width = 0.0mm, draw=magenta!60, fill=magenta!60, draw opacity = 0.5, fill opacity=0.5]
       (1,2.5) -- (1,2) -- (1.5,2) -- (1.5,2.5)  -- cycle;
    \draw [line width = 0.0mm, draw=magenta!60, fill=magenta!60, draw opacity = 0.5, fill opacity=0.5]
    (2,1.5) -- (2,1) -- (2.5,1) -- (2.5,1.5)  -- cycle;

    \node at (0.75,2.95) {\tiny{$\ddots$}};
    \node at (2.75,0.95) {\tiny{$\ddots$}};

    \draw[line width = 0.4mm, draw=magenta]
     (0, 2) -- (2,2) -- (2,0);

   \draw [decorate, decoration = {calligraphic brace, amplitude=2.5pt, mirror}, line width = 0.3mm] (3.85,0.05) --  (3.85,1.4)
    node[midway, right]{\tiny $B$};
    
    \draw [decorate, decoration = {calligraphic brace, amplitude=2.5pt, mirror}, line width = 0.3mm] (3.85,1.5) --  (3.85,2)
    node[midway, right]{\tiny $C$};

    \draw [decorate, decoration = {calligraphic brace, amplitude=2.5pt, mirror}, line width = 0.3mm] (3.85,2.1) -- (3.85,3.45)
    node[midway, right]{\tiny $A$};
    
\end{tikzpicture}

\end{minipage}
\end{minipage}

\caption{One can see the structure of $v$ and the structure of $w$ in each of the cases. The area of $v$ that is outlined in green 
corresponds to the covariance matrix $\Sigma$.
The diagonal of $\Sigma$ is highlighted in pink. In both cases, the Fulton conditions coming from $w$ are the same as the conditions from the corresponding CI statement, which are highlighted in the same colour in $\Sigma$.}
\end{figure}

\end{lemma}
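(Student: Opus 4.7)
The plan is to verify the correspondence between the Kazhdan-Lusztig variety $\Nvw$ and the CI variety $V(J_{A \indep B \mid C})$ by comparing their defining ideals directly. Since $\Nvw \cong \Xw \cap \Sigma_v$, the defining equations of $\Nvw$ are obtained by imposing the Fulton rank conditions for $w$ on the generic matrix $Z^{(v)}$. The goal is to show that, after a suitable identification of coordinates, these conditions coincide with the rank condition $\mathrm{rank}(\Sigma_{A \cup C, B \cup C}) \leq |C|$ that defines $J_{A \indep B \mid C}$.

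First, I would analyze the structure of $\Sigma_v$ for the given $v = (n-m, \ldots, n, n-m-1, \ldots, 1)$. By direct inspection of the defining conditions $Z_{v(i),i}=1$, $Z_{v(i),a} = 0$ for $a > i$, and $Z_{b,i} = 0$ for $b < v(i)$, the southwest $(m+1)\times(m+1)$ submatrix of $Z^{(v)}$ carries $1$'s on its main diagonal at positions $(n-m+i-1,\, i)$ for $i = 1, \ldots, m+1$, zeros strictly above this diagonal, and free variables $z_{ij}$ strictly below. I would then identify the lower-triangular block of free variables with the covariance matrix $\Sigma$ by mapping the entry at position $(n-m+p,\, q)$ of $Z^{(v)}$ (for $p > q$) to $\Sigma_{pq}$, so that the subdiagonal of the block corresponds to the diagonal of $\Sigma$ and the identification is a bijection of free variables on both sides.

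Next, for case (1), I would compute $\Dc(w)$ and verify that it is a single $k \times l$ rectangle placed at rows $n-l+1, \ldots, n$ and columns $1, \ldots, k$, so that $\Ess(w) = \{(n-l+1,\, k)\}$ with $r_w(n-l+1, k) = 0$. Since $k+l \leq m$, this block lies strictly below the main diagonal of the $(m+1)\times(m+1)$ submatrix (consisting entirely of free variables) and, under the identification, translates to setting $\Sigma_{[m-l+1,\,m],\,[1,\,k]} = 0$, which by symmetry of $\Sigma$ is exactly $\Sigma_{A,B} = 0$ for $A = [1,k]$ and $B = [m-l+1, m]$, i.e.\ $A \indep B$. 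For case (2), an analogous computation shows that $\Dc(w)$ is a $t \times s$ rectangle at rows $n-t, \ldots, n-1$ and columns $2, \ldots, s+1$, giving $\Ess(w) = \{(n-t,\, s+1)\}$ with $r_w(n-t, s+1) = 1$, where the rank $1$ is contributed by the single $1$ of $w$ at position $(n,1)$. Since $l+k = m+1$, the Fulton submatrix now crosses the subdiagonal of the block by exactly one row and one column; its northeast corner $(n-t,\, s+1)$ sits on the subdiagonal and hence corresponds to the diagonal entry $\Sigma_{s+1,\,s+1}$. Under the identification the Fulton submatrix is precisely $\Sigma_{[s+1,\,m],\,[1,\,s+1]}$, which by symmetry equals $\Sigma_{A \cup C,\, B \cup C}^{T}$ with $A = [1,s]$, $B = [s+2,m]$, and $C = \{s+1\}$, so the rank-at-most-one condition matches exactly.

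The main obstacle I expect is purely bookkeeping: carefully confirming that the Fulton submatrix as it sits inside $Z^{(v)}$ transports to the correct block of $\Sigma$ under the identification, and in particular handling the mild mismatch in case (2) where the Fulton submatrix has an honest $1$ at $(n,1)$ (coming from $w$, not from $\Sigma_v$), which is exactly what is needed to bump the rank bound from $0$ to $|C|=1$ on the covariance side. Once these indices line up and the one-step diagonal crossover in case (2) is accounted for, equality of the generating sets of the two ideals is immediate, and hence so is the isomorphism of varieties.
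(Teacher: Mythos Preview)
Your proposal is correct and follows essentially the same approach as the paper: identify the free coordinates of $Z^{(v)}$ (the strictly lower-triangular part of the southwest $(m+1)\times(m+1)$ block) with the entries of $\Sigma$ via the subdiagonal $\to$ diagonal correspondence, then check that the single Fulton rank condition from $\Ess(w)$ transports to the CI rank condition on $\Sigma_{A\cup C,\,B\cup C}$ in each case. Your write-up is simply more explicit than the paper's short paragraph; one small wording caution: in case~(2) the submatrix $Z^{(v)}_{[n-t,\,s+1]}$ itself contains no $1$'s (it consists entirely of free variables), and the $1$ at $(n,1)$ in the permutation matrix of $w$ enters only through $r_w(n-t,s+1)=1$, exactly as you say it ``bumps the rank bound''.
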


\noindent Note that the statement of the previous theorem is not an if and only if. We can also obtain $\mathbb V(J_{A \indep B \mid C}) \cong \Nvw$ in other cases.

\begin{example} 
Consider a $4$-dimensional Gaussian random vector $X \sim \mathcal N (\mu, \Sigma)$ and the CI statement $1 \indep 3 \ |\ 2$. Also, consider the permutations $v=125643$ and $w= 645321$ in $S_6$.
Then, the opposite Rothe diagram and the area of $Z^{(v)}$ to which the rank one Fulton condition is applied are shown in the diagram below. We also draw the covariance matrix $\Sigma$ and outline the area $\Sigma_{A \cup C, B \cup C}$
on which the rank one CI condition applies. Then one can see that 
$\mathbb V(J_{A \indep B \mid C}) \cong \Nvw$.\\

\begin{minipage}{0.4\textwidth}
\begin{center}
\hspace{1.7cm}
\begin{tikzpicture}[scale = 0.5]

  \draw[step=1.0,black,thin] (0,0) grid (6,6);

    \node at (0.5,5.5) {{$1$}}; 
   \node at (1.5,4.5) {{$1$}}; 
   \node at (2.5,1.5) {{$1$}}; 
   \node at (3.5,0.5) {{$1$}};
   \node at (4.5,2.5) {{$1$}};
   \node at (5.5,3.5) {{$1$}};

  \draw [line width = 0.0mm, draw=CornflowerBlue, fill=CornflowerBlue, draw opacity = 0.3, fill opacity=0.3]
       (0,0) -- (0,5) -- (1,5) -- (1,4) -- (2,4) -- (2,1) -- (3,1) -- (3,0) -- cycle;

  \draw[line width = 0.7mm, draw=Gray, draw opacity = 0.4]
  (1.5, 5) -- (1.5,6);
  \draw[line width = 0.7mm, draw=Gray, draw opacity = 0.4]
  (2.5, 2) -- (2.5,6);
  \draw[line width = 0.7mm, draw=Gray, draw opacity = 0.4]
  (3.5, 1) -- (3.5,6);
  \draw[line width = 0.7mm, draw=Gray, draw opacity = 0.4]
  (4.5, 3) -- (4.5,6);
  \draw[line width = 0.7mm, draw=Gray, draw opacity = 0.4]
  (5.5, 4) -- (5.5,6);
  \draw[line width = 0.7mm, draw=Gray, draw opacity = 0.4];

  \draw[line width = 0.7mm, draw=Gray, draw opacity = 0.4]
  (4, 0.5) -- (6,0.5);
  \draw[line width = 0.7mm, draw=Gray, draw opacity = 0.4]
  (3, 1.5) -- (6,1.5);
  \draw[line width = 0.7mm, draw=Gray, draw opacity = 0.4]
  (5, 2.5) -- (6,2.5);
  \draw[line width = 0.7mm, draw=Gray, draw opacity = 0.4]
  (2, 4.5) -- (6,4.5);
  \draw[line width = 0.7mm, draw=Gray, draw opacity = 0.4]
  (1, 5.5) -- (6,5.5);

    \node at (0.5,4.5) {\small \textcolor{Blue}{$z_{21}$}}; 
    \node at (0.5,3.5) {\small \textcolor{Blue}{$z_{31}$}};
    \node at (0.5,2.5) {\small \textcolor{Blue}{$z_{41}$}}; 
    \node at (0.5,1.5) {\small \textcolor{Blue}{$z_{51}$}};
    \node at (0.5,0.5) {\small \textcolor{Blue}{$z_{61}$}}; 
    \node at (1.5,3.5) {\small \textcolor{Blue}{$z_{32}$}};
    \node at (1.5,2.5) {\small \textcolor{Blue}{$z_{42}$}}; 
    \node at (1.5,1.5) {\small \textcolor{Blue}{$z_{52}$}};
    \node at (1.5,0.5) {\small \textcolor{Blue}{$z_{62}$}}; 
    \node at (2.5,0.5) {\small \textcolor{Blue}{$z_{63}$}};  

  \draw[line width = 0.5mm, draw=magenta]
  (0, 2) -- (2,2) -- (2,0);
    
\end{tikzpicture}
\end{center}
\end{minipage}
\begin{minipage}{0.5\textwidth}
\begin{center}
\hspace{-3cm}
\begin{tikzpicture}[scale = 0.5] 
    \draw[step=1.0,Gray,thin] (0,0) grid (4,4);

    \draw[line width = 0.3mm] (0.2,-0.2) -- (-0.2,-0.2) -- (-0.2,4.2) -- (0.2,4.2);
    \draw[line width = 0.3mm] (3.8,4.2) -- (4.2,4.2) -- (4.2,-0.2) -- (3.8,-0.2);

    \draw[line width = 0.5mm, draw=magenta]
    (1,4) -- (1,2) -- (3,2) -- (3,4) -- cycle;

    \node at (0.5,3.5) {\small {$s_{11}$}};
    \node at (0.5,2.5) {\small {$s_{12}$}}; 
    \node at (0.5,1.5) {\small {$s_{13}$}};
    \node at (0.5,0.5) {\small {$s_{14}$}}; 
    \node at (1.5,3.5) {\small {$s_{12}$}};
    \node at (1.5,2.5) {\small {$s_{22}$}}; 
    \node at (1.5,1.5) {\small {$s_{23}$}};
    \node at (1.5,0.5) {\small {$s_{24}$}};
    \node at (2.5,3.5) {\small {$s_{13}$}};
    \node at (2.5,2.5) {\small {$s_{23}$}}; 
    \node at (2.5,1.5) {\small {$s_{33}$}};
    \node at (2.5,0.5) {\small {$s_{34}$}}; 
    \node at (3.5,3.5) {\small {$s_{14}$}};
    \node at (3.5,2.5) {\small {$s_{24}$}}; 
    \node at (3.5,1.5) {\small {$s_{34}$}};
    \node at (3.5,0.5) {\small {$s_{44}$}};

    \node at (-1.5,2) {{$\Sigma =$}};

\end{tikzpicture}
\end{center}
\end{minipage} \\

\end{example}

\noindent 
This family of CI varieties is always toric w.r.t.\ some toric action, since it is generated by linear terms or binomials and by \cite[Theorem 1.1]{biaggi2025binomiality}. 
As we have done for matrix Schubert varieties in Proposition~\ref{prop: complexity Mschu}, we can also state the complexity of these varieties with respect to the usual torus action. 

\begin{proposition}
    In the cases of Lemma \ref{KL CI}, the complexity of $\mathcal N_{v,w}$ is $m (\frac{m-1}{2})-|A||B|$.
    \begin{proof}
    		We know that $\dim \mathcal N_{v,w} = \frac{m(m+1)}{2}- |\Dc(w)|	$.
    		In case (1), $|\Dc(w)|= kl = |A||B|$ and in           		case (2),
    		$|\Dc(w)| = (k-1)(l-1) = |A||B|$.
    		We claim that in both cases, $\dim \sigma_{v,w} = m$: For case (2), this follows from Corollary \ref{cor: conn comp}, since $\Gvwt$ has one connected component coming from the lower left corner of $v$ and $n-m-1$ from the upper right corner. The graph $\Gvwt$ for case (1) is the same, except for removing the edges corresponding to elements of $\dom(v)$. 
            Since in this case, $\dom(v)$ does not intersect the elements of $\Zg$ corresponding to the diagonal of $\Sigma$, removing these edges does not change the connected components of $\Gvwt$.
    \end{proof}
\end{proposition}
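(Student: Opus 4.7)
The plan is to compute $\dim\Nvw$ and $\dim\sigma_{v,w}$ separately and combine them via the identity $\mathrm{complexity}(\Nvw)=\dim\Nvw-\dim\sigma_{v,w}$.

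First I would compute $\dim\Nvw=|\Dc(v)|-|\Dc(w)|$. A direct reading of $v(j)$ shows that $(i,j)\in\Dc(v)$ iff $1\leq j\leq m$ and $n-m+j\leq i\leq n$, so $\Dc(v)$ is a right-triangular staircase in the south-west with $m+1-j$ boxes in column $j$; thus $|\Dc(v)|=\tfrac{m(m+1)}{2}$. A short inspection of the figure in Lemma~\ref{KL CI} (or an inversion count) shows that $\Dc(w)$ is a single rectangle, of size $l\times k$ in case~(1) and of size $(l-1)\times(k-1)$ in case~(2); in both cases it contains $|A|\cdot|B|$ boxes.

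Next I would prove that $\dim\sigma_{v,w}=m$ in both cases via a direct count of connected components of $\Gvwt$. The key observation is that the $m$ subdiagonal positions $(n-m+r,r)$ for $r=1,\dots,m$ all lie in $\Dc(v)$ and correspond to the coordinates identified with the diagonal of $\Sigma$ in Lemma~\ref{KL CI}; they contribute edges $(n-m+r-1)\to(n-m+r)$ which together form a path joining the $m+1$ vertices $\{n-m,\dots,n\}$ into one component. By the structure of $v$, every edge of $\Gvwt$ uses vertices in $\{n-m,\dots,n\}$, so the $n-m-1$ vertices in $\{1,\dots,n-m-1\}$ are isolated, giving exactly $n-m$ components in total. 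In case~(2), the single rank-$1$ Fulton condition is binomial and forces no coordinate of $\Zg$ to vanish, so all subdiagonal edges are present and the count is immediate (agreeing with Corollary~\ref{cor: conn comp}, which predicts $|C_v|+|A_v|=1+(n-m-1)$). In case~(1), the rank-$0$ Fulton condition kills exactly the $l\times k$ rectangle $\SW(w)\cap\Dc(v)$, but the assumption $l+k\leq m$ forces this rectangle to be disjoint from the subdiagonal; the connecting path thus survives and the component count remains $n-m$. Either way, $\dim\sigma_{v,w}=n-(n-m)=m$.

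Combining these steps yields the claimed complexity $\tfrac{m(m+1)}{2}-|A||B|-m=\tfrac{m(m-1)}{2}-|A||B|$. The main technical point is the case~(1) analysis: deleting the edges coming from the actual-zero rectangle could in principle split the big component, and the crux is that the hypothesis $l+k\leq m$ precisely prevents this by preserving the subdiagonal path through $\{n-m,\dots,n\}$.
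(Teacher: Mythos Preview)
Your argument is correct and follows essentially the same line as the paper's proof: both compute $|\Dc(v)|=\tfrac{m(m+1)}{2}$ and $|\Dc(w)|=|A||B|$, then establish $\dim\sigma_{v,w}=m$ by showing that $\Gvwt$ has one large component on the vertices $\{n-m,\dots,n\}$ together with $n-m-1$ isolated vertices, invoking Corollary~\ref{cor: conn comp} for case~(2) and checking in case~(1) that the edges lost to the rank-$0$ rectangle do not disconnect the subdiagonal path. Your write-up is in fact more explicit than the paper's---you spell out the path edges $(n-m+r-1)\to(n-m+r)$ and make the role of the hypothesis $l+k\le m$ transparent, whereas the paper leaves this implicit (and writes ``$\dom(v)$'' where the set of actual zeros, namely the $l\times k$ rectangle $\Dc(w)$, is meant).
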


\subsection{Quasi-independence Models}
\noindent In this section, we consider when the defining ideal of matrix Schubert variety can be identified with the ideal defined by a ``quasi-independence model'' that is defined on two variables. Quasi-independence models were first introduced by Cassinius in \cite{caussinus1965contribution}. Let us first define the class of quasi-independence models, that we are interested in, which are also referred to as two-way quasi-independence models.

Let $X$ and $Y$ be two discrete random variables, where $X$ can take the states $x_1, \ldots x_m$ and $Y$ takes the states $y_1,\ldots,y_n$. Then, we say that $X$ and $Y$ are in \emph{quasi-independence} if there are certain combinations of states of $X$ and $Y$ that occur with zero probability, but are otherwise independent of one another. 

\begin{definition}

Now, we wish to express this model algebraically.  Let $S\subseteq [m]\times[n]$ 
be the so-called state space of the model we are considering. It is the set that enumerates the ‘allowed’ pairs of states . Now, define $\mathbb{R}^S$ to be the real vector space of dimension $|S|$, with coordinates indexed by elements of $S$. We also index the coordinates of $\mathbb{R}^{m+n}$ by $(s_1,\ldots,s_m,t_1\ldots,t_n)$. 

\noindent
Define the monomial map $\phi^S:\mathbb{R}^{m+n} \rightarrow \mathbb{R}^S$ by
$$\phi^S_{ij}(s,t) = s_it_j.$$

\noindent
Then, we define the associated quasi-independence model by
$$\mathcal{M}_S = \phi^S(\mathbb{R}^{m+n}) \cap \Delta_{|S|-1}.$$
Here, $\Delta_{|S|-1}$ is the standard probability simplex of dimension $|S|-1$. 
\end{definition}

\noindent In particular, it has been shown that for any quasi-independence model $\mathcal{M}_S$, the defining ideal $I(\mathcal{M}_S)$ is a toric ideal - see for instance \cite[Chapter 6]{sullivant2023algebraic}. 

We can also associate bipartite graphs to $2$-way quasi-independence models. This is especially useful in our case. It is done by setting the indices of the states of $X$ as one set of vertices and the indices of the states of $Y$ as the other. We then draw an edge between $i$ and $j$ if $(i,j)$ is in the state space. For illustration, consider the following example.

\begin{example}\label{eg:q-i model}
Let the state space $S$ be equal to $$\{(1,1),(1,2),(2,1),(2,2),(2,3),(3,1),(3,2),(3,3)\}.$$ Then, we can draw the bipartite graph associated to the quasi-independence model $\mathcal{M}_S$ as follows (note that the literature -  \cite{coons2021quasi} - bipartite graphs of quasi-independence models are drawn in the following orientation).

\begin{center}
\begin{tikzpicture}[every node/.style={minimum size=0.5cm}]
\node [shape=circle, draw=black] (A1) at (0,1) {$1$};
\node [shape=rectangle, draw=black] (B1) at (0,0) {$1^*$};
\node [shape=rectangle, draw=black] (B2) at (1,1) {$2^*$};
\node [shape=circle, draw=black] (A2) at (1,0) {$2$};
\node [shape=circle, draw=black] (A3) at (2,1) {$3$};
\node [shape=rectangle, draw=black] (B3) at (2,0) {$3^*$};

\path [-] (A1) edge (B1);
\path [-] (A1) edge (B2);
\path [-] (A2) edge (B2);
\path [-] (A2) edge (B1);
\path [-] (A2) edge (B3);
\path [-] (A3) edge (B2);
\path [-] (A3) edge (B1);
\path [-] (A3) edge (B3);
\end{tikzpicture}
\end{center}

\noindent
The vertices given by circles, refer to the indices of states of the $X$-variable, and those given by rectangles, refer to those associated to the $Y$-variable.

\end{example} 

\subsubsection{Matrix Schubert Ideals as Quasi-independence ideals}
\noindent Now, recall that the graph ${G}_w$, associated to the matrix Schubert variety $\Xw$ is also a bipartite graph. 
If we wish to identify a matrix Schubert ideal with a quasi-independence ideal, we require the former to be toric. Recall that $\overline{X_w}$ is toric if and only if $L'(w)$ is the union of disjoint hooks. In this case, we can then associate $\overline{X_w}$ to a quasi-independence model, as follows. 
\begin{itemize}
    \item For each disjoint hook $L_i'(w)$ of size $m$ by $n$, consider the associated, disjoint, section of $L(w)$, given by the staircase $L_i(w)$.
    \item  Then, each $L_i(w)$ can be associated to a $2$-way quasi-independence model $\mathcal{M}_{S_i}$ on variables $X$ and $Y$, where $X$ takes states in a copy of $[m]$ and $Y$ takes states in a copy of $[n]$.
    \item  We naturally see that $S = \{(x,y)\in L_i(w)\}$.
    \item  The quasi-independence model associated to the entirety of $\overline{X_w}$ is then given by the union of each of the disjoint quasi-independence models. 
\end{itemize}
Let us now consider the following example in the case that $L'(w)$ is formed of exactly one hook, to visualise this identification.

\begin{example}
    Let us consider the matrix Schubert variety $\Xw$, for $w = 251346 \in S_6$. Thus, we find that 
    $$L(w) = \{(3,2),(3,3),(4,2),(4,3),(4,4),(5,2),(5,3),(5,4)\},$$
    after drawing the opposite Rothe diagram. We can further draw the underlying undirected graph associated to $G_w$ as below.

    \begin{center}
    \begin{tikzpicture}[every node/.style={minimum size=0.5cm}]
\node [shape=circle, draw=black] (A3) at (0,0.5) {$5$};
\node [shape=circle, draw=black] (A2) at (0,1.5) {$4$};
\node [shape=circle, draw=black] (A1) at (0,2.5) {$3$};
\node [shape=rectangle, draw=black] (B3) at (1,0.5) {$4^*$};
\node [shape=rectangle, draw=black] (B2) at (1,1.5) {$3^*$};
\node [shape=rectangle, draw=black] (B1) at (1,2.5) {$2^*$};

\path [-] (A1) edge (B1);
\path [-] (A1) edge (B2);
\path [-] (A2) edge (B2);
\path [-] (A2) edge (B1);
\path [-] (A2) edge (B3);
\path [-] (A3) edge (B2);
\path [-] (A3) edge (B1);
\path [-] (A3) edge (B3);
\end{tikzpicture}
\end{center}
 \noindent Clearly, this graph is equivalent to the graph associated to the quasi-independence model, as drawn in Example~\ref{eg:q-i model}, after relabelling. We then see that the matrix Schubert ideal associated to $w$ is equivalent to the ideal of this quasi-independence model. 
\end{example}

\noindent
Finally, let us formalise this thinking as follows. Consider a hook $L_i'(w)$ and a staircase $L_i(w)$, each of height $m$ and width $n$. We draw the following diagram and label the diagram as follows. 

\begin{center}
\begin{tikzpicture}[scale = 0.9]
    \draw(0,0) rectangle (4,4);
    \draw[line width = 0.5mm, draw=magenta!60] (1,1.5) -- (1,3);
    \draw[line width = 0.5mm, draw=magenta!60] (1,3) -- (1.5,3);
    \draw[line width = 0.5mm, draw=magenta!60] (1.5,3) -- (1.5,2.75);
    \draw[line width = 0.5mm, draw=magenta!60] (1.5,2.75) -- (2,2.75);
    \draw[line width = 0.5mm, draw=magenta!60] (2,2.75) -- (2,2);
    \draw[line width = 0.5mm, draw=magenta!60] (2,2) -- (2.5,2);
    \draw[line width = 0.5mm, draw=magenta!60] (2.5,2) -- (2.5,1.5);
    \draw[line width = 0.5mm, draw=magenta!60] (2.5,1.5) -- (1,1.5);
    \draw[line width = 0.5mm, draw=magenta] (1.25,1.75)--(1.25,3);
    \draw[line width = 0.5mm, draw=magenta] (1.25,1.75)--(2.5,1.75);
    \draw[line width = 0.5mm, draw=magenta] (1.25,3)--(1,3);
    \draw[line width = 0.5mm, draw=magenta] (2.5,1.5)--(2.5,1.75);
    \draw[line width = 0.5mm, draw=magenta] (2.5,1.5)--(1,1.5);
    \draw[line width = 0.5mm, draw=magenta] (1,3)--(1,1.5);
    \draw[line width = 0.5mm, draw=blue]   (1.5,3) -- (2.5,3);
    \draw[line width = 0.5mm, draw=blue]   (2.5,2) -- (2.5,3);
    \draw[->,magenta!60,line width = 0.5mm] (1.4,2.9)--(1.4,3.3) node[above] {$L(w)$};
    \draw[->,magenta,line width = 0.5mm] (1.15,1.6)--(1.15,1.1) node[below] {$L'(w)$};
    \draw[->,blue,line width = 0.5mm] (2.3,2.5)--(2.8,2.5) node[right] {$R_i(w)$};
\end{tikzpicture}
\end{center}
\noindent Then, the associated quasi-independence model is given by $S = \mathcal{K}_{mn} \setminus\{(x,y)\in R_i(w)\}$. Here, $\mathcal{K}_{mn}$ denotes $[m]\times[n]$.

\begin{remark}
    With the quasi-independence model defined as above, $S$ must contain the following:
    \begin{itemize}
    \item $\{(x,1)|x\in [m]\}$,
    \item $\{(m,y)|y\in [n]\}$,
    \item $\{(x,2)|x\in [m]\}$,
    \item $\{(m-1,y)|y\in [n]\}$.
\end{itemize}

\noindent
Note that for simplicity of notation, here, we assume $S \subset [m] \times [n]$. The first two bullet points above refer to the elements of the hook. The latter two refer to the second column, from the left, and the second row, from the bottom, of the staircase. These must always be in $L(w)$, in order to guarantee a hook formation appearing in $\SW(w)\setminus D^{\circ}(w)$, and hence in $L'(w)$. 
\end{remark}

\subsubsection{Rational MLE}\noindent We now want to prove that all quasi-independence models that arise from toric matrix Schubert varieties have rational maximum likelihood estimate (MLE). Let us begin by recalling the concepts of likelihood probability and MLE. Given a parametric model (which in our case is a quasi-independence model) and some data in hand, we often wish to determine the model that ``best fits'' the data. One way to do this is to maximise the likelihood probability, which is defined as follows. 

\begin{definition}
    Let $D$ be some data from a discrete parametric model $\mathcal{M}$. The \emph{likelihood function} $L(p|D)$ is defined as the probability of observing the data $D$, under the distribution $p\in \mathcal{M}$. We say that $\hat{p}$ is the value of the distribution that maximises the likelihood function, if such a maximum does exist. Then, $\hat{p}$ is called the \emph{maximum likelihood estimate} (MLE). The \emph{log-likelihood function} $l(p|D)$ is given as the natural logarithm of the likelihood function. 
\end{definition}

\noindent We are especially interested in the case where the MLE can be written as a rational function of the data. In the case of quasi-independence models, there is 
a full classification of models with rational MLE. 

\begin{theorem}[\cite{coons2021quasi} Theorem 1.3]\label{thm: rat mle}
Let $S \subseteq [m] \times [n]$ and let $\mathcal{M}_S$ be the associated quasi-independence model. Let $\mathcal{G}_S$ be the bipartite graph associated to S. Then $\mathcal{M}_S$ has rational maximum likelihood estimate if and only if $\mathcal{G}_S$ is doubly chordal bipartite.
\end{theorem}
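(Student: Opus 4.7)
The plan is to establish both implications by combining the algebraic-geometric characterization of rational MLE for discrete exponential models due to Huh with the combinatorial structure of doubly chordal bipartite graphs. Since $\mathcal{M}_S$ is defined by the monomial parametrization $p_{ij} = s_i t_j$ for $(i,j) \in S$, its defining ideal is toric and the Zariski closure of $\mathcal{M}_S$ is a projective toric variety $X_S \subset \mathbb{P}^{|S|-1}$. By Huh's theorem, such a model has rational MLE (equivalently, ML-degree one) if and only if $X_S$ admits a Horn uniformization with a positivity-compatible matrix factorization. The strategy throughout is to translate this algebraic criterion into the combinatorial language of $\mathcal{G}_S$.

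For the backward direction, I would proceed by induction on $|V(\mathcal{G}_S)|$, exploiting the fact that a doubly chordal bipartite graph admits a bi-simplicial elimination ordering: there exists a vertex whose closed neighborhood in $\mathcal{G}_S$ forms a biclique and satisfies a doubly dominating property. Such a vertex corresponds to a parameter $s_i$ or $t_j$ whose score equation $\partial \ell / \partial s_i = 0$ can be solved explicitly in closed form, yielding a rational expression in the data. Removing that vertex gives a smaller induced subgraph which is again doubly chordal bipartite and corresponds to a marginal submodel, so the inductive hypothesis produces a rational formula for the remaining parameters; back-substitution then assembles a global rational MLE.

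For the forward direction, I would argue by contrapositive. If $\mathcal{G}_S$ is not doubly chordal bipartite, it must contain an induced subgraph that either is a chordless cycle of length at least six, or witnesses the failure of the second chordality condition. Any such induced subgraph $\mathcal{G}_{S'}$ corresponds to a marginal submodel $\mathcal{M}_{S'}$, and a finite base-case verification (for instance on the six-cycle) shows that $\mathcal{M}_{S'}$ already has ML-degree strictly greater than one. A standard argument that ML-degree does not decrease under passage from a submodel obtained by summing out coordinates, combined with the toric structure, then forces $\mathcal{M}_S$ itself to have ML-degree $\geq 2$, contradicting rationality.

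The hardest step is the forward direction, because it demands translating a global algebraic condition (ML-degree one, equivalently the existence of a Horn factorization) into a local, hereditary combinatorial obstruction on $\mathcal{G}_S$. This requires an explicit analysis of how the Horn matrix of $\mathcal{M}_S$ restricts to induced subgraphs, together with a classification of minimal forbidden configurations for doubly chordal bipartiteness; the delicate point is showing that the ``second'' chord condition, not just ordinary chordal bipartiteness, is genuinely forced. The backward direction is more mechanical once a bi-simplicial elimination ordering is in hand, but one must still verify that each elimination step both yields a closed-form solution of the corresponding score equation and preserves doubly chordality in the residual graph.
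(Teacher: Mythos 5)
This statement is quoted from \cite{coons2021quasi} (their Theorem~1.3); the paper under review gives no proof of it, only the citation, so there is no internal argument to compare yours against. Measured instead against the actual proof of Coons--Sullivant, your outline captures the right high-level architecture (Huh's ML-degree-one/Horn characterisation, a forbidden-subgraph analysis for necessity, an explicit rational construction for sufficiency), but both directions as you state them have genuine gaps.

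In the forward direction, the reduction step you invoke is the wrong one. An induced subgraph of $\mathcal{G}_S$ on row set $A$ and column set $B$ corresponds to the submodel with state space $S\cap(A\times B)$, which is a \emph{facial} submodel of $\mathcal{M}_S$ (the support set $A\times B$ cuts out a face of the model polytope), not a marginal obtained by summing out coordinates. The fact you need is that rational MLE is inherited by facial submodels; the claim that ``ML-degree does not decrease under summing out coordinates'' is neither the relevant operation nor a safe general principle. You also cannot get away with ``a finite base-case verification'': the minimal obstructions are cycles of length $\ge 6$ with at most one chord, an infinite family, so you must either prove non-rationality for all of them uniformly or reduce every such configuration to finitely many cases. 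In the backward direction, the elimination induction is not substantiated: after solving the score equation for one bisimplicial vertex and deleting it, the remaining Birch/moment-matching equations are not the score equations of the smaller quasi-independence model, so the inductive hypothesis does not apply as stated. The published proof instead writes down an explicit closed-form rational candidate for the MLE from the biclique structure of the doubly chordal bipartite graph and verifies it directly via Birch's theorem. As it stands, your proposal is a plausible roadmap but not a proof.
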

\noindent Note that a doubly chordal bipartite graph is a bipartite graph in which every cycle of length $\geq 6$ has at least two chords. Every bipartite graph that is not doubly chordal contains either a cycle of length greater than $2k,$ for $k\geq 3$, or a double square. The latter has the following structure:
\begin{center}
\begin{tikzpicture} \label{fig:doublesquare}[every node/.style={minimum size=0.5cm}]
\node [shape=circle, draw=black] (A1) at (0,1) {$r_1$};
\node [shape=rectangle, draw=black] (B1) at (0,0) {$c_1^*$};
\node [shape=rectangle, draw=black] (B2) at (1,1) {$c_2^*$};
\node [shape=circle, draw=black] (A2) at (1,0) {$r_2$};
\node [shape=circle, draw=black] (A3) at (2,1) {$r_3$};
\node [shape=rectangle, draw=black] (B3) at (2,0) {$c_3^*$};

\path [-] (A1) edge (B1);
\path [-] (A1) edge (B2);
\path [-] (A2) edge (B2);
\path [-] (A2) edge (B1);
\path [-] (A2) edge (B3);
\path [-] (A3) edge (B2);
\path [-] (A3) edge (B3);
\end{tikzpicture}
\end{center}

\noindent Now, we will use Theorem \ref{thm: rat mle} to prove the following theorem. 

\begin{theorem}\label{thm: rational MLE}
    Every quasi-independence model that arises from a toric matrix Schubert variety has rational maximum likelihood estimate. 
\end{theorem}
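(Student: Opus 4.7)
The plan is to invoke Theorem~\ref{thm: rat mle}, which reduces the problem to showing that the bipartite graph $\mathcal{G}_S$ associated to $\mathcal{M}_S$ is doubly chordal bipartite, i.e.\ contains no induced cycle of length $\geq 6$ and no induced double square.

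First, I would reduce to the case of a single hook. By Theorem~\ref{thm:toric_desc}, $L'(w)$ is a disjoint union of hooks $L'_1(w),\ldots,L'_k(w)$, and the associated staircases $L_i(w)$ share no rows or columns. Hence $\mathcal{G}_S$ decomposes as a disjoint union $\mathcal{G}_{S_1}\sqcup\cdots\sqcup\mathcal{G}_{S_k}$, and since induced cycles and induced double squares both lie within a single connected component, it suffices to prove each $\mathcal{G}_{S_i}$ is doubly chordal bipartite.

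The key observation is that each state space $S_i$ is (after the natural relabeling used to define the quasi-independence model) a Young diagram in French notation: writing $S_i=[m_i]\times[n_i]\setminus R_i(w)$, the removed piece $R_i(w)$ is a Young shape in the northeast corner, so the row-lengths $\lambda_1\geq\lambda_2\geq\cdots\geq\lambda_{m_i}$ of $S_i$ (counted from the bottom) form a weakly decreasing sequence. Consequently $\mathcal{G}_{S_i}$ is a \emph{Ferrers bipartite graph}: after ordering rows and columns by decreasing neighborhood size, each row neighborhood is an initial segment $N(r)=\{1,2,\ldots,\lambda_r\}$, so the row neighborhoods are totally ordered by inclusion (and the same holds for column neighborhoods).

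The final step is to rule out the two forbidden induced subgraphs using this total ordering. For an induced cycle $r_1\,c_1\,r_2\,c_2\cdots r_j\,c_j\,r_1$ of length $2j\geq 6$, the neighborhood restrictions $N(r_1)\cap\{c_1,\ldots,c_j\}=\{c_1,c_j\}$ and $N(r_2)\cap\{c_1,\ldots,c_j\}=\{c_1,c_2\}$ would be incomparable, contradicting the Ferrers property. For an induced double square with rows $r_1,r_2,r_3$ and columns $c_1,c_2,c_3$, the neighborhoods $N(r_1)=\{c_1,c_2\}$ and $N(r_3)=\{c_2,c_3\}$ are likewise incomparable. Thus each $\mathcal{G}_{S_i}$, hence $\mathcal{G}_S$, is doubly chordal bipartite, and Theorem~\ref{thm: rat mle} yields the result. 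The main obstacle I anticipate is justifying cleanly that the combinatorial shape of $L_i(w)$ really is a Young diagram in the right orientation; but this follows from Theorem~\ref{thm:toric_desc} together with the staircase analysis carried out around Proposition~\ref{prop: one hook}.
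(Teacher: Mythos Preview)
Your argument is correct and takes a genuinely different route from the paper's proof. Both proofs invoke Theorem~\ref{thm: rat mle} and reduce to a single connected component, but from there they diverge. The paper argues by cases on $|\Ess(w)|$ and, for the general case, appeals to an external structural result \cite[Lemma~4.6]{portakal2023rigid} describing the nested form of maximal two-sided independent sets of $\mathcal{G}_S$; a cycle $C_{2k}$ with at most one chord is shown to force two such independent sets that violate this nesting. Your approach instead exploits directly that each $L_i(w)$ is a staircase (Young diagram in French notation), as established in the discussion following Proposition~\ref{prop: one hook}, so that $\mathcal{G}_{S_i}$ is a Ferrers bipartite graph whose row neighborhoods are totally ordered by inclusion; the forbidden induced $C_{2k}$ ($k\geq 3$) and double square are then excluded by a two-line incomparability argument. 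Your route is more self-contained and elementary, avoiding the external reference, and in fact proves the slightly stronger statement that $\mathcal{G}_{S_i}$ is chordal bipartite (no induced cycle of length $\geq 6$ at all). The paper's route, on the other hand, ties the result more explicitly to the independent-set combinatorics of $G_w$ already developed in \cite{portakal2023rigid}, which may be of independent interest for readers following that thread.
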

\begin{proof}
We prove the theorem by showing that $\mathcal{G}_S$ is doubly chordal for a corresponding toric matrix Schubert variety, $\Xw$. Let $S\subseteq [m]\times[n]$. Without loss of generality, we may assume that $\mathcal{G}_S$ is connected. We differentiate cases based on the number of elements in the essential set $\Ess(w)$. For the connection between essential sets and independent sets we refer to \cite[Section 4.2]{portakal2023rigid}. If $|\Ess(w)| = 1$, then $\mathcal{G}_S$ is the complete bipartite graph $\mathcal K_{m,n}$. If $|\Ess(w)| = 2$, then there exists a unique two-sided maximal independent (stable) set and thus $\mathcal{G}_S$ is doubly chordal. Now, suppose that $\mathcal{G}_S$ contains a cycle $C_{2k} = (r_1,c^*_1, r_2, c^*_2, \cdots, r_k,c^*_k,r_1)$ for $k \geq 3$ with exactly one chord and assume that $\{r_1, c^*_i\}$ is the unique chord for $i \in [k-1] \backslash \{1\}$. For $|\Ess(w)| \geq 2$, by \cite[Lemma 4.6]{portakal2023rigid}, the structure of the maximal two-sided independent (stable) sets of $\mathcal{G}_S$ is known. Namely, let $C = C_1 \sqcup C_2$ and $C' = C'_1 \sqcup C'_2$ be two such independent sets. Then $C_1 \subsetneq C'_1 \subseteq [m]$ and $C'_2 \subsetneq C_2 \subseteq [n]$. 
If $k = 3$, then there are exactly two maximal two-sided independent sets that do not fit the desired structure, leading to a contradiction. For $k \geq 4$, take $C = \{r_1, r_2\} \sqcup X_1 \sqcup \{c^*_3, \cdots, c^*_{i-1}, c^*_{i+1}, c^*_{k-1}\} \sqcup Y_1$ and $C' = \{r_2, r_3\} \sqcup X_2 \sqcup \{c^*_4, \cdots, c^*_k\} \sqcup Y_2$, for some vertex sets $X_1, X_2 \subset [m]$ and $Y_1, Y_2 \subset [n]$. Then $c^*_k \in Y_1$ which is a contradiction to $C$ being an independent set by the definition of the cycle $C_{2k}$. The case where the cycle does not admit a chord follows similarly.  
\end{proof}

\subsection*{Acknowledgements}
We thank Pratik Misra, Colleen Robichaux, and Mahrud Sayrafi for the helpful discussion and useful comments.

\bibliographystyle{abbrv}
\bibliography{biblio}

\end{document}